\numberwithin{equation}{section}
\newtheorem{theorem}{Theorem}[section]
\newtheorem{assumption}{Assumption}[section]
\newtheorem{corollary}{Corollary}[section]
\newtheorem{lemma}{Lemma}[section]
\newtheorem{proposition}{Proposition}[section]
\newtheorem{definition}{Definition}[section]
\newtheorem{remark}{Remark}[section]
\newtheorem{example}{Example}[section]
\newcommand{\lemref}{Lemma~\ref}
\newcommand{\propref}{Proposition~\ref}
\newcommand{\asref}{Assumption~\ref}
\renewcommand{\P}{\mathbb{P}}
\newcommand{\Q}{\mathbb{Q}}
\newcommand{\R}{\mathbb{R}}
\newcommand{\E}{\mathbb{E}}
\newcommand{\cE}{\mathcal{E}}
\newcommand{\N}{\mathbb{N}}
\newcommand{\F}{\mathcal{F}}
\newcommand{\X}{\mathbb{X}}
\newcommand{\B}{\mathcal{B}}
\newcommand{\T}{\mathcal{T}}
\newcommand{\eps}{\varepsilon}
\newcommand{\nada}[1]{}
\definecolor{gb}{rgb}{0, 0.2, 0.8}
\title{Optimal Equilibria for Time-Inconsistent Stopping Problems in Continuous Time\thanks{We would like to thank the anonymous referee and Associate Editor, whose suggestions led to stronger results and nicer presentation.}}
\author{Yu-Jui Huang\thanks{
University of Colorado, Department of Applied Mathematics, Boulder, CO 80309-0526, USA, email: \texttt{yujui.huang@colorado.edu}. Partially supported by National Science Foundation (DMS-1715439) and the University of Colorado (11003573).}
 \and Zhou Zhou\thanks{
University of Sydney, School of Mathematics and Statistics, NSW 2006, Australia, email: \texttt{zhou.zhou@sydney.edu.au}.}
}
\date{\today}
\begin{document}
\maketitle

\begin{abstract}
For an infinite-horizon continuous-time optimal stopping problem under non-exponential discounting, we look for an {\it optimal equilibrium}, which generates larger values than any other equilibrium does on the {\it entire} state space. When the discount function is log sub-additive and the state process is one-dimensional, an optimal equilibrium is constructed in a specific form, under appropriate regularity and integrability conditions. While there may exist other optimal equilibria, we show that they can differ from the constructed one in very limited ways. This leads to a sufficient condition for the uniqueness of optimal equilibria, up to some closedness condition. To illustrate our theoretic results, comprehensive analysis is carried out for three specific stopping problems, concerning asset liquidation and real options valuation. For each one of them, an optimal equilibrium is characterized through an explicit formula.
\end{abstract}

\textbf{MSC (2010):} 
93E20,  
91G80.
\smallskip

\textbf{Keywords:} optimal stopping, time inconsistency, non-exponential discounting, optimal equilibria, consistent planning.

\section{Introduction}
{\it Consistent planning} in Strotz \cite{Strotz55} has been a widely-accepted approach for time-inconsistent 
problems, when a decision maker (an agent) lacks sufficient control over his future selves' behavior. It is a two-phase procedure. First, the agent should find out the strategies that he will actually follow over time 
({\it Phase I}). Formulated as subgame perfect Nash equilibira in subsequent literature, these strategies are constantly called {\it equilibria}. Next, the agent needs to, according to \cite[p.173]{Strotz55}, ``find the best plan among those he will actually follow'' ({\it Phase II}).

In finite-horizon discrete-time models, an equilibrium can be found through straightforward backward sequential optimization, as detailed in Pollak \cite{Pollak68}. It is in fact the {\it unique} equilibrium, as observed in \cite{Kocher96} and \cite{Asheim97}. {\it Phase I} is consequently done, and there is no need of {\it Phase II}. By contrast, once we deviate from either the finite horizon or the discrete-time structure, consistent planning becomes highly nontrivial. 
In infinite-horizon discrete-time settings, ad hoc constructions are used to prove the existence of a few specific equilibria; see e.g. \cite{PP68}, \cite{PY73}, and \cite{GW07}. In continuous-time settings, equilibria are characterized through a system of nonlinear equations, proposed in \cite{EP08} and \cite{BKM17}. This has aroused vibrant research in mathematical finance, such as \cite{EMP12}, \cite{HJZ12}, \cite{Yong12}, and \cite{BMZ14}, among others. Solving this system of equations is, however, difficult; even when it is solved (as in the special cases in \cite{EP08} and \cite{BKM17}), we only obtain one particular equilibrium. In other words, as it currently stands, {\it Phase I} is only partially done, with finding {\it all} equilibria a remaining challenge. {\it Phase II}, on the other hand, has not been addressed at all. It is worth noting that, as opposed to finite-horizon discrete-time models, multiple equilibria may exist, as shown in \cite{PP68}, \cite{Asheim97}, \cite{Kocher96}, and \cite{EP08}. How to select one appropriate equilibrium to use is then a genuine problem. 

Huang and Nguyen-Huu \cite{HN17} recently developed an iterative approach for {\it Phase I} of consistent planning. For stopping problems, \cite{HN17} constructed a fixed-point iteration that can easily generate a large class of equilibria. When the state process is one-dimensional, Huang, Nguyen-Huu, and Zhou \cite{HNZ17} further established that {\it every} stopping strategy converges to an equilibrium via the fixed-point iteration, which facilitates the search for {\it all} equilibria. 
This approach has been applied to stopping problems under non-exponential discounting in \cite{HN17}, and extended to a probability distortion setting in \cite{HNZ17}.

In this paper, we take up the framework in \cite{HN17}, and focus on {\it Phase II} of consistent planning. Specifically, we investigate an infinite-horizon continuous-time stopping problem in which  an agent maximizes his expected payoff, under non-exponential discounting, by choosing an appropriate time to stop a one-dimensional continuous Markov process $X$, taking values in an interval $\X\subseteq \R$.   

An optimality criterion for equilibria, demanded by {\it Phase II}, has hardly been explored in the literature. Here, we propose that an equilibrium is {\it optimal} if it generates a larger value than any other equilibrium does, at {\it every}  
state $x\in\X$; see Definition~\ref{def:optimal equilibrium}. This criterion is rather strong: in economic terms, it requires a subgame perfect Nash equilibrium to be dominant on the entire state space, a rare find in game theory. We, nonetheless, establish the existence of an optimal equilibrium $R^*$, given in the specific form \eqref{R*}; see Theorem~\ref{t1}, which is the main result of this paper.

Theorem~\ref{t1} hinges on several crucial conditions. First, the discount function is required to be log sub-additive, i.e. satisfy \eqref{e1} below. This in particular captures {\it decreasing impatience}, a well-documented feature of empirical discounting in Behavioral Economics and Finance; see the discussion below \eqref{e1} for details. Second, a ``diffusive condition'', i.e. Assumption~\ref{a1}, is imposed on the state process $X$. It serves to ensure that whenever $X$ reaches the boundary of a Borel subset $R$ of $\X$, it is {\it diffusive} enough to enter $R$ immediately. This allows us to restrict our attention to only {\it closed} equilibria (see Section~\ref{subsec:closed equilibria}), with no need to deal with other equilibria of possibly pathological forms. A large class of diffusion processes are shown to satisfy Assumption~\ref{a1}; see Remark~\ref{rem:Assumption satisfied}. Finally, we need \eqref{e34} and \eqref{e32} as the appropriate long-run limiting and integrability conditions for our stopping problem. When \eqref{e32} is violated, an optimal equilibrium in general may not exist; see Proposition~\ref{prop:nu in between} and Remark~\ref{rem:no optimal E} for details.    

A natural ensuing question is whether the optimal equilibrium $R^*$, established in Theorem~\ref{t1}, is actually the unique one. This is {\it not} true in general: in Example~\ref{eg:counterexample},  we construct explicitly $R^*$ and another optimal equilibrium $T^*$, with $R^*$ properly contained in $T^*$. Despite the possibility of multiple optimal equilibria, if we impose closedness requirement for equilibria, we observe that each closed optimal equilibrium can only differ from $R^*$ is very limited ways, as detailed in Corollary~\ref{coro:T* and R*}. Moreover, a reasonable sufficient condition is proposed in Theorem~\ref{t2} for $R^*$ to be the unique closed optimal equilibrium.  This condition can be verified in many practical stopping problems, including those studied in Section~\ref{sec:examples}. 

To illustrate our theoretic results, comprehensive analysis is carried out in Section~\ref{sec:examples} for three specific examples: (i) the stopping of a Bessel process, (ii) the stopping of a geometric Brownian motion, and (iii) an American put option written on a geometric Brownian motion. Each of these examples describes a problem of real options valuation or asset liquidation, and the corresponding optimal equilibrium $R^*$ is characterized through an explicit formula. Another contribution of the analysis here is that it shows how $R^*$ can be found and characterized, even when some of the conditions in Theorem~\ref{t1} actually fail. Indeed, as shown in Section~\ref{subsec:GBM}, one can introduce a related auxiliary stopping problem for which Theorem~\ref{t1} is applicable. 

It is worth noting that in a discrete-time setting, Huang and Zhou \cite{HZ17-discrete} also establish the existence of an optimal equilibrium, for an infinite-horizon stopping problem. Compared with \cite{HZ17-discrete}, our current continuous-time setup is much less amenable. For instance, there is no need in \cite{HZ17-discrete} to carefully differentiate closed equilibria from other equilibria, as every equilibrium is closed in discrete time, under mild continuity assumptions. Moreover, the discrete-time structure leads to the uniqueness of optimal equilibria, which saves \cite{HZ17-discrete} all the endeavor we make in Section~\ref{sec:uniqueness}.
For a detailed comparison between the present paper and \cite{HZ17-discrete}, see Section~\ref{subsec:comparison}.

The paper is organized as follows. Section~\ref{sec:setup} introduces the setup of our stopping problem, the formulation of equilibria, and the optimality criterion for equilibria. Section~\ref{sec:first result} presents a useful basic result, relying only on condition \eqref{e1}, which readily reveals the form of an optimal equilibrium. Section~\ref{sec:existence} establishes the existence of an optimal equilibrium $R^*$. It is shown in Propositions~\ref{p2} and \ref{p2'} that a countable intersection of closed equilibria is again an equilibrium, which is the key to deriving the main result Theorem~\ref{t1}. Section~\ref{sec:uniqueness} studies how another optimal equilibrium $T^*$ can differ from $R^*$, and proposes a sufficient condition, in Theorem~\ref{t2}, for $R^*$ being the unique closed optimal equilibrium. Section~\ref{sec:examples} contains detailed analysis of three specific stopping problems.


\section{The Setup and Preliminaries}\label{sec:setup}
Consider a probability space $(\Omega,\F,\P)$ that supports a time-homogeneous 
continuous strong Markov process $X=(X_t)_{t\in[0,\infty)}$, taking values in some interval $\X\subseteq\R$. Let $\B(\X)$ denote the collection of Borel sets in $\X$.  Let $\mathbb{F}:= (\F_t)_{t\ge 0}$ be the $\P$-augmentation of the filtration generated by $X$, and $\T$ be the collection of all $\mathbb F$-stopping times. For each $x\in\X$, we denote by $\P^x$ (resp. $\E^x$) the probability (resp. expectation) conditioned on $X_0 = x\in\X$. Also, we write $X^x$ for $X$ to emphasize its initial value $X_0 = x$.

Consider a payoff function $f:\,\X\to[0,\infty)$, assumed to be continuous. Also consider a discount function $\delta:\,[0,\infty)\to(0,1]$, assumed to be non-increasing, continuous, and satisfy $\delta(0)=1$ and $\delta(t)\to 0$ as $t\to\infty$. 
The classical optimal stopping problem is formulated as
\begin{equation}\label{classical}
\sup_{\tau\in\T} \E^x[\delta(\tau) f(X_\tau)].  
\end{equation}
Here, we allow $\tau\in\T$ to take the value $\infty$: we take $\delta(\tau) f(X^x_\tau) := \limsup_{t\to\infty} \delta(t) f(X^x_t)$ on $\{\tau = \infty\}$; this is in line with Karatzas and Shreve \cite[Appendix D]{KS-book-98}. 
It is well-known from standard literature (e.g. \cite[Appendix D]{KS-book-98} and Shiryayev \cite{Shiryayev-book-78}) that under fairly general conditions, for any initial state $x\in\X$, there exists an optimal stopping time $\widetilde\tau_x\in \T$ 
for \eqref{classical}. It is then natural to ask whether optimal stopping times obtained at different moments, such as $\widetilde\tau_x$ at time $0$ and $\widetilde\tau_{X^x_t}$ at time $t>0$, are consistent with each other. Specifically, \eqref{classical} is said to be {\it time-consistent} if for any $x\in\X$ and $t>0$, we have
\begin{equation}\label{time-consistency}
\widetilde\tau_x(\omega) = t + \widetilde\tau_{X^x_t}(\omega)\quad \hbox{for a.e. $\omega\in \{\widetilde\tau_{x}\ge t\}$}.
\end{equation}
This condition, in general, {\it only} holds for $\delta(t) := e^{-\alpha t}$ for some $\alpha>0$. When $\delta$ is a general non-exponential discount function, Huang and Nguyen-Huu \cite[Section 2]{HN17} analyze in detail the involved time inconsistency. Intuitively, the failure of \eqref{time-consistency} means that the optimal stopping time we find today, $\widetilde\tau_x$, need not be optimal in the future. Our future self at time $t>0$ is tempted to employ $\widetilde\tau_{X^x_t}$, optimal to him at time $t$, instead of sticking with the previously-determined $\widetilde\tau_x$. This is problematic: although a maximizer $\widetilde\tau_x$ of \eqref{classical} can be found, it will not be carried out by our future selves. In the end, the supremum in \eqref{classical} may not be attained, due to time inconsistency. 

Throughout this paper, we will assume that the discount function $\delta$ satisfies 
\begin{equation}\label{e1}
\delta(s)\delta(t)\leq\delta(s+t),\quad\forall\,s,t>0.
\end{equation}
This particularly captures {\it decreasing impatience}, a well-documented feature of empirical discounting in Behavioral Economics and Finance; see e.g. \cite{Thaler81}, \cite{LT89}, and \cite{LP92}. As detailed in \cite{prelec2004decreasing}, decreasing impatience amounts to steeper discounting for time intervals closer to the present. Note that \eqref{e1} covers non-exponential discount functions that induce decreasing impatience, such as $\delta(t):=1/(1+\beta t)$ for $\beta>0$ (hyperbolic), $\delta(t):=1/(1+\beta t)^k$ for $\beta, k >0$ (generalized hyperbolic), and $\delta(t):=\lambda e^{-r_1 t}+ (1-\lambda) e^{-r_2 t}$ for $r_1, r_2>0$ and $\lambda\in (0,1)$ (pseudo-exponential). For a detailed exposition, see the discussion below \cite[Assumption 3.12]{HN17}. 

As mentioned in Introduction, Strotz \cite{Strotz55} proposes {\it consistent planning} as a way to resolve time inconsistency: an agent should take into account his future selves' behavior, and find a stopping strategy that once being enforced, none of his future selves would want to deviate from. Such strategies, called {\it equilibria} in the literature, will be precisely formulated below.


\subsection{Formulation of Equilibria}

As observed in \cite{HN17}, under time inconsistency, since one may re-evaluate and change his choice of stopping times over time, his stopping strategy is not a single stopping time, but a stopping policy defined below.

\begin{definition}
A Borel measurable $\tau:\X\to \{0,1\}$ is called a stopping policy. 
\end{definition}

Given any current state $x\in \X$, a stopping policy $\tau$ governs when the agent stops: he stops at the first time $\tau(X^x_t)$ yields the value 0. Note that this definition, taken from \cite[Definition 3.1]{HN17}, can be equivalently stated using Borel sets in $\X$. Indeed, by definition,  
\begin{equation}\label{tau and R}
\tau:\X\to \{0,1\}\ \hbox{is a stopping policy}\quad \iff\quad \tau(x) = 1_{R^c}(x)\ \hbox{for some $R\in\B(\X)$}.  
\end{equation}
Here, $\tau$ and $R$ admit the relation $R = \{x\in\X : \tau(x)=0\}$. Hence, the game-theoretic formulation in \cite[Section 3.1]{HN17}, stated in terms of stopping policies $\tau$, can be recast using Borel sets $R$.  

Specifically, to resolve time inconsistency, the agent 
needs to take into account his future selves' behavior, and find the best response to that. Suppose the agent initially planned to take $R\in\B(\X)$ as his stopping policy. Now, at any state $x\in\X$, the agent carries out the game-theoretic reasoning: ``assuming that all my future selves will follow $R\in\B(\X)$, what is the best stopping strategy today in response to that?'' The agent today has only two possible actions: stopping and continuation. If he stops, he gets $f(x)$ immediately. If he continues, given that all his future selves will follow $R\in\B(\X)$, he will eventually stop at the
moment
\[
\rho(x,R):=\inf\{t>0:\ X_t^x\in R\}\in\T.
\]
This leads to the expected payoff 
\begin{equation}\label{J}
J(x, R) := \E^x[\delta(\rho(x, R)) f(X_{\rho(x, R)})].  
\end{equation}
By comparing the payoffs $f(x)$ and $J(x, R)$, we obtain the best stopping strategy for today 
\begin{equation}\label{Theta}
\Theta(R):=S_R\cup \left(I_R \cap R\right)\in \B(\X),
\end{equation}
where 
\begin{equation}\label{regions}
\begin{split}
S_R &:= \{x\in\X: f(x)> J(x,R)\},\\
I_R & := \{x\in\X: f(x)=J(x,R)\},\\
C_R &:= \{x\in\X: f(x)< J(x,R)\}.   
\end{split}
\end{equation}
Here, $S_R$, $I_R$, and $C_R$ are called the {\it stopping region}, the {\it indifference region}, and the {\it continuation region}, respectively. In particular, on $I_R$, the agent is indifferent between stopping and continuation as they yield the same payoff. There is then no incentive for the agent to deviate from the original stopping strategy $R\in\B(\X)$. This gives rise to the term $I_R \cap R$ in \eqref{Theta}. 

\begin{definition}\label{def:E}
$R\in\mathcal{B}(\X)$ is called an equilibrium if $\Theta(R)=R.$
We denote by $\cE$ the set of all equilibria.
\end{definition}

\begin{remark}
Under current context where $X$ is time-homogeneous, \eqref{Theta} is simply a re-formulation of (3.6) in \cite{HN17}, using the relation \eqref{tau and R}. Consequently, Definition~\ref{def:E} is simply a re-statement of \cite[Definition 3.7]{HN17}, thanks again to \eqref{tau and R}. 
\end{remark}

\begin{remark} [Existence of an equilibrium] \label{rem:trivial equilibrium}
The entire state space $\X$ is an equilibrium. Indeed, for any $x\in\X$, since $\rho(x,\X) = 0$, we have $J(x,\X)=f(x)$. This implies $I_\X = \X$, and thus $\Theta(\X)=\X$. 
\end{remark}

The general methodology for finding equilibria is the fixed-point iteration introduced in \cite{HN17}: one starts with an arbitrary $R\in\B(\X)$, and apply $\Theta$ to it repetitively until an equilibrium is reached. The next result is a direct consequence of \cite[Theorem 3.16]{HN17} and \eqref{tau and R}. 

\begin{proposition}\label{prop:iteration}
For any $R\in \B(\X)$ such that $R\subseteq \Theta(R)$, we have $\Theta^n(R)\subseteq \Theta^{n+1}(R)$ for all $n\in\N$. Moreover,
\[
R_0 := \lim_{n\to\infty} \Theta^n(R) = \bigcup_{n\in\N}\Theta^n(R)
\]
belongs to $\cE$. 
\end{proposition}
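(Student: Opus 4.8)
The plan is to read the result off the policy-based fixed-point theory of \cite{HN17} through the dictionary \eqref{tau and R}. Under $\tau = 1_{R^c}$ the set operator $\Theta$ in \eqref{Theta} is exactly the policy operator of \cite[Section 3]{HN17}, and the hypothesis $R\subseteq\Theta(R)$ says that applying $\Theta$ can only enlarge the stopping region, i.e. the associated policies satisfy the one-sided inequality that drives the monotone iteration in \cite[Theorem 3.16]{HN17}. Granting that theorem, both the monotonicity $\Theta^n(R)\subseteq\Theta^{n+1}(R)$ and the membership $R_0\in\cE$ transfer verbatim, while the identity $\lim_n\Theta^n(R)=\bigcup_n\Theta^n(R)$ is just the definition of the limit of an increasing sequence of sets.

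For a self-contained argument I would split the statement into two parts. First, monotonicity, by induction on $n$: the base case $\Theta^0(R)=R\subseteq\Theta(R)$ is the hypothesis, and the inductive step, upon writing $A:=\Theta^{n-1}(R)$ and $B:=\Theta^n(R)=\Theta(A)$ with $A\subseteq B$, reduces to the single implication
\[
A\subseteq B=\Theta(A)\quad\Longrightarrow\quad B\subseteq\Theta(B).
\]
Since every $x\in B$ already lies in $B$, to place it in $\Theta(B)=S_B\cup(I_B\cap B)$ it suffices to exclude $x\in C_B$, i.e. to show $f(x)\ge J(x,B)$. Writing $\sigma:=\rho(x,B)\le\rho(x,A)$ and $y:=X_\sigma\in B=\Theta(A)$, one has $f(y)\ge J(y,A)$ by the definition \eqref{Theta}--\eqref{regions} of $\Theta(A)$, and the strong Markov property at $\sigma$ together with the log sub-additivity \eqref{e1} (that is, $\delta(\sigma)\delta(t)\le\delta(\sigma+t)$) is the mechanism one would use to upgrade this pointwise inequality at the new stopping points to $f(x)\ge J(x,B)$; see the obstacle below.

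Second, that $R_0:=\bigcup_n R_n$ with $R_n:=\Theta^n(R)$ is an equilibrium. Here I would fix $x\in\X$ and pass to the limit in $\Theta(R_n)=R_{n+1}$. Because the $R_n$ increase to $R_0$, the hitting times decrease, $\rho(x,R_n)\downarrow\rho(x,R_0)$, so continuity of $f$, continuity of $\delta$, and dominated convergence yield $J(x,R_n)\to J(x,R_0)$; feeding this into \eqref{J}--\eqref{regions} and using $R_0=\bigcup_n R_n$ then gives $\Theta(R_0)=R_0$.

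The main obstacle is the one-step monotonicity $\Theta(A)\subseteq\Theta(\Theta(A))$. The operator $\Theta$ is \emph{not} monotone as a map on Borel sets: enlarging the stopping region need not decrease the continuation value, and a direct comparison of $J(x,A)$ and $J(x,B)$ is genuinely inconclusive, since \eqref{e1} supplies inequalities in the unfavorable direction on both sides. The argument must instead exploit the specific fact that the newly added stopping points $y\in B\setminus A$ all lie in $S_A$, where stopping weakly dominates continuing to $A$, and combine this with \eqref{e1} in the correct order. The measure-theoretic care needed at irregular points of the hitting times, in justifying $\rho(x,R_n)\downarrow\rho(x,R_0)$, and in passing the strict/weak inequalities defining $S,I,C$ to the limit, is precisely what is carried out in \cite{HN17}, which is why we quote \cite[Theorem 3.16]{HN17} rather than reproving it here.
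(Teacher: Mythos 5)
Your proposal takes exactly the paper's route: the paper proves Proposition~\ref{prop:iteration} by invoking \cite[Theorem 3.16]{HN17} through the dictionary \eqref{tau and R}, which is precisely your first paragraph. Your supplementary sketch of the underlying monotone iteration --- including the correct identification that the real work lies in the one-step implication $A\subseteq\Theta(A)\Rightarrow\Theta(A)\subseteq\Theta^2(A)$, handled in \cite{HN17} --- is consistent with this and does not change the substance.
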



\subsection{Optimality of an Equilibrium}

Finding equilibria is only the first phase of {\it consistent planning} in Strotz \cite{Strotz55}. In the second phase, the agent should choose the {\it best} one among all equilibria. 
This requires certain optimality criterion for an equilibrium, which has not been addressed in the literature. 

For each $R\in\cE$, define the associated value function by 
\[
V(x,R):=f(x)\vee J(x,R),\quad \hbox{for all}\ x\in\X. 
\] 

\begin{definition}\label{def:optimal equilibrium}
$\hat R\in\cE$ is called an optimal equilibrium, if for any $R\in\cE$,
$$V(x,\hat R)\geq V(x,R)\quad \forall x\in\X.$$
\end{definition}
\noindent This is a strong criterion for optimality, as it requires dominance of $\hat R$ over $R$ at {\it every} $x\in\X$. Another way to interpret Definition~\ref{def:optimal equilibrium} is that, similarly to classical optimal stopping in \eqref{classical}, we aim to solve
\begin{equation}\label{classical'}
\sup_{R\in\cE} V(x,R). 
\end{equation}
What differs from classical optimal stopping is that we are not satisfied with solving \eqref{classical'} for each $x\in\X$. Instead, we intend to find one single $\hat R\in\cE$ that solves \eqref{classical'} simultaneously for all $x\in\X$. This uniform dominance on the entire space $\X$ is necessary to avoid another level of time inconsistency.  

The main goal of this paper is to show that, while the optimality criterion is rather strong, an optimal equilibrium does exist.


\section{The First Result}\label{sec:first result}
We observe that \eqref{e1} already implies that an $R\in\cE$ generates larger values than any $T\in\B(\X)$ containing $R$.

\begin{lemma}\label{t3}
For any $R\in\cE$ and $T\in\B(\X)$ with $R\subseteq T$, 
\begin{equation}\label{ee4}
J(x,R)\geq J(x,T)\quad\forall\,x\in\mathbb{X}.
\end{equation}
\end{lemma}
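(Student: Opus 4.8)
The plan is to compare the two payoffs through the strong Markov property, exploiting the equilibrium identity $\Theta(R)=R$ together with the log sub-additivity \eqref{e1}. Write $\sigma:=\rho(x,T)$ and $\rho:=\rho(x,R)$. Since $R\subseteq T$, any time at which $X^x$ enters $R$ is also a time at which it enters $T$, so the first hitting times obey $\sigma\le\rho$. The equilibrium condition $R=S_R\cup(I_R\cap R)$ unpacks into $f\ge J(\cdot,R)$ on $R$ and $f\le J(\cdot,R)$ on $R^c$; only the second of these will actually be used.

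First I would split $\Omega$ into three pieces: $\{\sigma=\infty\}$, $\{\sigma<\infty,\ X_\sigma\in R\}$, and $\{\sigma<\infty,\ X_\sigma\notin R\}$. On $\{\sigma=\infty\}$ we also have $\rho=\infty$ (as $\rho\ge\sigma$), so by the $\limsup$ convention the integrands $\delta(\sigma)f(X_\sigma)$ and $\delta(\rho)f(X_\rho)$ coincide. On $\{\sigma<\infty,\ X_\sigma\in R\}$ the path already sits in $R$ at time $\sigma$, so $\rho=\sigma$ and $X_\rho=X_\sigma$, and again the two integrands agree exactly.

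The substance lies in the event $A:=\{\sigma<\infty,\ X_\sigma\notin R\}$. There, no entry into $R$ occurs up to and including time $\sigma$, so time-homogeneity and the strong Markov property give $\rho=\sigma+\rho'$ with $X_\rho=X_{\sigma+\rho'}$, where $\rho':=\rho(\cdot,R)\circ\theta_\sigma$. Applying \eqref{e1} in the form $\delta(\sigma+\rho')\ge\delta(\sigma)\delta(\rho')$, pulling the $\F_\sigma$-measurable factors $\delta(\sigma)\mathbf 1_A$ out of the conditional expectation, and identifying the inner expectation (via strong Markov) as $J(X_\sigma,R)$, I obtain
\[
\E^x\big[\delta(\rho)f(X_\rho)\mathbf 1_A\big]\ \ge\ \E^x\big[\delta(\sigma)J(X_\sigma,R)\mathbf 1_A\big].
\]
Since $X_\sigma\in R^c$ on $A$, the equilibrium inequality $f\le J(\cdot,R)$ on $R^c$ gives $J(X_\sigma,R)\ge f(X_\sigma)$, so the right-hand side dominates $\E^x[\delta(\sigma)f(X_\sigma)\mathbf 1_A]$. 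Summing the three pieces yields $J(x,R)\ge J(x,T)$.

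The main obstacle is the rigorous treatment of the decomposition $\rho=\sigma+\rho'$ with the measurability of the shifted hitting time, and the careful handling of $\{\rho=\infty\}$ under the $\limsup$ convention, where \eqref{e1} must be passed to the limit. The case split is genuinely necessary: on $\{X_\sigma\in R\}$ one cannot replace $f(X_\sigma)$ by the lower bound $J(X_\sigma,R)$, because there $f\ge J(\cdot,R)$ points the wrong way; it is precisely the exact equality $\rho=\sigma$ on that event that rescues the argument.
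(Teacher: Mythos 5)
Your proposal is correct and follows essentially the same route as the paper's proof: condition at the entry time into $T$, use \eqref{e1} to split the discount factor, invoke the strong Markov property to identify the continuation value as $J(X_{\rho(x,T)},R)$, and apply the equilibrium inequality $f\le J(\cdot,R)$ off $R$. The only cosmetic difference is the partition of $\Omega$ (your three events $\{\sigma=\infty\}$, $\{X_\sigma\in R\}$, $\{X_\sigma\notin R\}$ versus the paper's two events $\{\tau<v\}$ and $\{\tau=v\}$), which changes nothing of substance since in both cases the exact-equality events are dispatched trivially and the key chain of inequalities is run only where the hitting times genuinely differ.
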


\begin{proof}
Fix $x\in\X$. For simplicity, we will write $v =\rho(x,R)$ and $\tau =\rho(x,T)$. With $R\subseteq T$, we have $\tau \le v$. Consider the event
$
A := \{\omega\in\Omega : \tau < v\}. 
$ 
Observe that
\begin{align}
J(x,R) &= \E^x\left[\delta(v)f(X_v) 1_A\right] +\E^x\left[\delta(v)f(X_v) 1_{A^c}\right]\notag\\
&= \E^x\left[\E^x\left[\delta(v)f(X_v)\mid\F_\tau\right] 1_A \right] +\E^x\left[\delta(\tau)f(X_\tau) 1_{A^c}\right]\notag\\
&\ge \E^x \left[\delta(\tau)\E^x\left[\delta(v-\tau)f(X_v) \mid\F_\tau\right] 1_A \right] +\E^x\left[\delta(\tau)f(X_\tau) 1_{A^c}\right],\label{ee1}
\end{align}
where the inequality follows from \eqref{e1} and $f$ being nonnegative. The strong Markov property of $X$ implies that
\begin{equation}\label{ee2}
\E^x\left[\delta(v-\tau)f\left(X_v\right)\mid \mathcal{F}_{\tau}\right] 1_A
=\E^{X_{\tau}^x}\left[\delta(\rho(X_{\tau}^x,R))f\left(X_{\rho(X_{\tau}^x,R)}\right)\right] 1_A = J(X_{\tau}^x,R) 1_A.
\end{equation}
Also note that $X_{\tau}^x\notin R$ on the event $A$. Since $R$ is an equilibrium, we deduce from \eqref{Theta} that
\begin{equation}\label{ee3}
f(X_{\tau}^x) \le J(X_{\tau}^x,R)\quad \hbox{on $A$}.
\end{equation}
By \eqref{ee1}, \eqref{ee2}, and \eqref{ee3}, we conclude that
\begin{align*}
J(x,R) &\ge \E^x\left[\delta(\tau)J(X_{\tau},R) 1_A\right]+\E^x\left[\delta(\tau)f(X_\tau) 1_{A^c}\right]\\ 
&\ge \E^x[\delta(\tau) f(X_\tau) 1_A]+\E^x\left[\delta(\tau)f(X_\tau) 1_{A^c}\right] = J(x,T).
\end{align*}
\end{proof}

Lemma~\ref{t3} leads to the intriguing observation: for equilibria, the smaller the better.

\begin{corollary}\label{coro:smaller better}
For any $R$, $T\in\cE$ with $R\subseteq T$, $V(x,R)\ge V(x,T)$ for all $x\in\X$. 
\end{corollary}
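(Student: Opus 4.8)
The plan is to prove Corollary~\ref{coro:smaller better} as an essentially immediate consequence of Lemma~\ref{t3}, by unpacking the definition $V(x,R) = f(x)\vee J(x,R)$ and observing that the only term which differs between $V(x,R)$ and $V(x,T)$ is the $J$ term, which is already controlled by the lemma. Since both $R$ and $T$ are equilibria and $R\subseteq T$, Lemma~\ref{t3} directly gives $J(x,R)\ge J(x,T)$ for every $x\in\X$ (note that the lemma only requires $T\in\B(\X)$, which is weaker than $T\in\cE$, so the hypotheses are comfortably met).

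From here the argument is a one-line monotonicity observation about the max operation. First I would fix an arbitrary $x\in\X$. Then, because $f(x)$ is a common lower bound appearing in both maxima and $J(x,R)\ge J(x,T)$, I would write
\begin{equation*}
V(x,R) = f(x)\vee J(x,R) \ge f(x)\vee J(x,T) = V(x,T),
\end{equation*}
where the inequality uses that $a\mapsto f(x)\vee a$ is non-decreasing in $a$. Since $x\in\X$ was arbitrary, this establishes $V(x,R)\ge V(x,T)$ for all $x\in\X$, which is exactly the claim.

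I do not anticipate any genuine obstacle here: the corollary is a packaging of Lemma~\ref{t3} into the language of the value function $V$, and all the analytic content (the use of log sub-additivity \eqref{e1}, the strong Markov property, and the equilibrium property via \eqref{Theta}) has already been spent in proving the lemma. The only point requiring the slightest care is to confirm that the monotonicity of the maximum is applied pointwise in $x$ and that the common term $f(x)$ is indeed identical on both sides, which it is by definition. Thus the proof is short and the emphasis is simply on invoking Lemma~\ref{t3} correctly.
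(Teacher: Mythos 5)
Your proof is correct and is exactly the argument the paper intends: the corollary is stated without proof as an immediate consequence of Lemma~\ref{t3}, and the same reasoning (apply the lemma to get $J(x,R)\ge J(x,T)$, then pass through the maximum with $f(x)$) appears verbatim in the paper's proof of Proposition~\ref{prop:first result}. Your observation that Lemma~\ref{t3} only needs $T\in\B(\X)$ rather than $T\in\cE$ is also accurate.
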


At first glance, it is intuitively puzzling how an equilibrium $R$ can possibly dominate {\it any} larger equilibrium that contains $R$, on the {\it entire} state space. In fact, Corollary~\ref{coro:smaller better} simply reflects decreasing impatience in decision making (recall the discussion below \eqref{e1}). 

To see this, consider two rewards at time $t$ and time $t+s$, respectively, with $t,s> 0$. If the agent at time $t$ prefers the second reward over the first one, note that this time-$t$ preference is kept, and actually amplified, at time 0, under decreasing impatience. Recall that decreasing impatience amounts to steeper discounting for time intervals closer to the present. To the agent at time $t$, the interval under consideration $[t,t+s]$ is essentially $[0,s]$, so he discounts steeply on this imminent interval when comparing the two rewards. By contrast, to the agent at time 0, the interval $[t,t+s]$ is further down in the future, so he discounts less steeply on this interval when comparing the two rewards. Hence, if the second reward is more valuable than the first one under steeper discounting (i.e. in the eyes of the agent at time $t$), it must be more valuable than the first one, to a greater extent, under flatter discounting (i.e. in the view of the agent at time 0). 



Now, take two equilibria $R$ and $T$. If $R\subseteq T$, $\rho(x,T)\le \rho(x,R)$ for all $x\in\X$. For $x\in R$, since $\rho(x,T)= \rho(x,R)=0$, we have $f(X^x_{\rho(x,T)})=f(X^x_{\rho(x,R)})=x$. The agent at the state $x\in R$ is then indifferent between $R$ and $T$, as they yield the same immediate payoff. For $x\notin R$, the fact that $R$ is an equilibrium indicates $f(X^x_{\rho(x,T)})\le J(X^x_{\rho(x,T)},   R)= \E^{y}[\delta(\rho(y,R))f(X_{\rho(y,R)})]$, with $y:= X^x_{\rho(x,T)}$. That is, to the agent at the state $y= X^x_{\rho(x,T)}$, the reward obtained from reaching $R$ (i.e. the second reward) is more valuable than that obtained from reaching $T$ (i.e. the first reward). As discussed above, decreasing impatience stipulates that the agent at the initial state $x\notin R$ has the same preference, i.e. $R$ is preferred over $T$ at $x\notin R$. Therefore, we conclude that the smaller equilibrium $R$ is preferred over (or at least no worse than) the larger one $T$, at {\it every} $x\in \X$.


A candidate optimal equilibrium can already be deduced from Lemma~\ref{t3}.  

\begin{proposition}\label{prop:first result}
If
$
\tilde R := \bigcap_{R\in\cE} R
$
is an equilibrium, then it is an optimal equilibrium.
\end{proposition}

\begin{proof}
By definition $\tilde R \subseteq R$ for all $R\in\cE$. If $\tilde R$ is an equilibrium, we can conclude from Lemma~\ref{t3} that for any $R\in\cE$, $J(x,\tilde R) \ge J(x,R)$, and thus $V(x,\tilde R) \ge V(x,R)$, for all $x\in\X$. Thus, $\tilde R$ is an optimal equilibrium. 
\end{proof}

For $\tilde R:= \bigcap_{R\in\cE} R$ to be an equilibrium, two technical issues have to be resolved: (i) Is the intersection of two equilibria again an equilibrium? (ii) If (i) is true, can it be generalized to an uncountable intersection of equilibria? In particular, how do we ensure that $\tilde R$ is Borel, given that it is an uncountable intersection of Borel sets? Section~\ref{sec:existence} below is devoted to these questions, and eventually establishes the existence of an optimal equilibrium.   

Before heading to the detailed analysis in Section~\ref{sec:existence}, we point out a straightforward consequence of Proposition~\ref{prop:first result} that is already useful for some stopping problems. 

\begin{corollary}\label{coro:emptyset is optimal}
If $\emptyset$ is an equilibrium, then it is an optimal equilibrium. 
\end{corollary}

\begin{example}\label{eg:nu>0}
Let $X$ be a geometric Brownian motion, given by
\begin{equation}\label{GBM}
dX_t = \mu X_t dt + \sigma X_t dW_t,
\end{equation}
where $\mu\in\R$ and $\sigma>0$ are constants, and $W$ is a standard Brownian motion. The state space is $\X=(0,\infty)$. Consider the hyperbolic discount function
\begin{equation}\label{hyperbolic}
\delta(t) := \frac{1}{1+\beta t}\quad t\ge0,
\end{equation}
where $\beta>0$ is a constant, as well as the payoff function $f(x) := x$ for $x>0$. Under current setting, \eqref{J} becomes
\begin{equation}\label{J GBM}
J(x,R) = \E^x\left[\frac{X_{\rho(x,R)}}{1+\beta\rho(x,R)}\right],\quad \hbox{for}\ x>0,\ R\in\B(\X). 
\end{equation}
Define 
\begin{equation}\label{nu}
\nu := \frac{\mu}{\sigma^2}-\frac12,
\end{equation}
and recall that for any $x>0$, 
\begin{equation} \label{X(infty)}
\lim_{t\to\infty} \frac{X^x_t}{1+\beta t} = \lim_{t\to\infty} \frac{x \exp{\left(\nu\sigma^2t + \sigma W(t)\right)}}{1+\beta t} = 
\begin{cases}
\infty,\quad  &\hbox{if $\nu>0$};\\
0,\quad  &\hbox{if $\nu\le 0$}.
\end{cases}
\end{equation}
For $\nu>0$, \eqref{X(infty)} implies $J(x,\emptyset) = \infty > f(x)$ for all $x>0$. This shows that $\emptyset\in \cE$ and $V(x,\emptyset)=\infty$ for all $x>0$. Thus, $\emptyset$ is an optimal equilibrium, which confirms Corollary~\ref{coro:emptyset is optimal}. 
\end{example}


\section{Existence of an Optimal Equilibrium}\label{sec:existence}

For each $x\in\X$, consider the running maximum $\overline X$ and the running minimum $\underline X$ of the state process $X$, i.e.
\[
\overline X^x_t := \max_{s\in[0,t]} X^x_s\qquad \hbox{and}\qquad \underline X^x_t := \min_{s\in[0,t]} X^x_s,\qquad t\ge 0.
\]
We impose the following condition on $X$.

\begin{assumption}\label{a1}
For any $x\in\X$, $\P^x[\overline X_t > x] = \P^x[\underline X_t < x] =1$ for all $t> 0$. 
\end{assumption}

\begin{remark}\label{rem:T^x_x=0}
For any $x\in\X$, consider $T^x_x := \inf\{t > 0 : X^x_t=x\}$, the first revisit time to the initial value $x$. 
A direct consequence of Assumption \ref{a1} is that $T^x_x=0$ $\P^x$-a.s.
\end{remark}

The purpose of Assumption~\ref{a1}, intuitively speaking, is to ensure that ``whenever $X$ touches the boundary of $R$, for any $R\in\B(\X)$, it will enter $R$ immediately''. This property will play a crucial role in Section~\ref{subsec:closed equilibria}. Note that Assumption~\ref{a1} is not very restrictive, since it already covers a large class of diffusion processes, as explained below.

\begin{remark}\label{rem:Assumption satisfied}
A one-dimensional Brownian motion obviously satisfies Assumption~\ref{a1}; see e.g. Problem 7.18 on p. 94 of \cite{KS-book-91}. This can in fact be generalized to a much broader class of diffusion processes. Specifically, let $X$ satisfy the dynamics
$$dX_t=\mu(X_t)dt+\sigma(X_t)dB_t,$$
where $B$ is a standard one-dimensional Brownian motion, and $\mu,\sigma$ are real-valued deterministic functions such that $\sigma(\cdot)>0$ and $\int_0^t \theta^2(X^x_s) ds <\infty$ $\P^x$-a.s. for all $x\in\X$ and $t\ge 0$, where $\theta(\cdot):=\mu(\cdot)/\sigma(\cdot)$. If the process
$$Z_t:=\exp{\left(-\int_0^t\theta(X_s)dB_s-\frac{1}{2}\int_0^t\theta^2(X_s)ds\right)},\quad t\ge 0$$
is a martingale, \cite[Lemma 3.1]{HNZ17} shows that $X$ satisfies Assumption~\ref{a1}. 
\end{remark}


\subsection{Closed Equilibria}\label{subsec:closed equilibria}
In this subsection, we will show that Assumption~\ref{a1} allows us to restrict our attention to only {\it closed} equilibria. For any $R\in\B(\X)$,  $\overline R$ denotes the closure of $R$. 

\begin{lemma}\label{lem:rho R = rho bar R}
Suppose \asref{a1} holds.  For any $R\in\B(\X)$, 
\[
\rho(x,R) = \rho(x,\overline R)\quad \P^x\hbox{-a.s.}\quad \hbox{$\forall x\in\X$.}
\] 
In particular, $\rho(x,R) = \rho(x,\overline R)=0$ $\P^x$-a.s. for all $x\in\overline R$.
\end{lemma}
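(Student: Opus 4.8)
The plan is to establish the two inequalities between $\rho(x,R)$ and $\rho(x,\overline R)$ separately. Since $R\subseteq\overline R$, the pathwise inclusion $\{t>0:X^x_t\in R\}\subseteq\{t>0:X^x_t\in\overline R\}$ gives $\rho(x,\overline R)\le\rho(x,R)$ for every $\omega$ and every $x\in\X$; this direction needs nothing. All the content lies in the reverse inequality $\rho(x,R)\le\rho(x,\overline R)$ $\P^x$-a.s., which says that $X$ cannot touch the boundary $\overline R\setminus R$ and then fail to enter $R$ immediately. This is precisely what the ``diffusive'' \asref{a1} is designed to rule out.

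The key step I would isolate first is the sublemma: for every $y\in\overline R$, $\rho(y,R)=0$ $\P^y$-a.s. I would prove it by distinguishing two cases. If $y\in R$, then $\rho(y,R)\le T^y_y=0$ $\P^y$-a.s. by \remref{rem:T^x_x=0}. If $y\in\overline R\setminus R$, pick a sequence $(y_n)\subseteq R$ with $y_n\to y$ and $y_n\ne y$; passing to a subsequence we may assume either $y_n\downarrow y$ or $y_n\uparrow y$. Suppose $y_n\downarrow y$ (the other case is symmetric). Fix $t>0$. By \asref{a1}, $\overline X^y_t>y$ $\P^y$-a.s., so on a set of full measure there is an index $n$ with $y<y_n<\overline X^y_t$; since $X^y_0=y<y_n\le\overline X^y_t$, continuity of paths and the intermediate value theorem force $X^y$ to hit $y_n\in R$ at some time in $(0,t]$, whence $\rho(y,R)\le t$. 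Letting $t\downarrow 0$ gives $\rho(y,R)=0$ $\P^y$-a.s.; the case $y_n\uparrow y$ uses $\underline X^y_t<y$ in the same way.

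With the sublemma in hand, I would handle a general $x\in\X$ by the strong Markov property. Write $\tau:=\rho(x,\overline R)$. On $\{\tau=\infty\}$ the reverse inequality is trivial. On $\{\tau<\infty\}$, continuity of $X$ and closedness of $\overline R$ give $X^x_\tau\in\overline R$, so conditioning on $\F_\tau$ and restarting the process at $X^x_\tau$, the sublemma yields that $X$ enters $R$ with no additional delay after $\tau$; hence $\rho(x,R)=\tau=\rho(x,\overline R)$ $\P^x$-a.s., which together with the trivial direction proves the displayed identity. The ``in particular'' clause then follows by applying the sublemma twice for $x\in\overline R$: since $\overline R=\overline{\overline R}$, the same argument gives both $\rho(x,R)=0$ and $\rho(x,\overline R)=0$ $\P^x$-a.s.

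I expect the sublemma, specifically the case $y\in\overline R\setminus R$, to be the main obstacle: one must convert the one-sided overshoot guaranteed by \asref{a1} into an actual hit of the approximating points $y_n$, which is where path-continuity and the intermediate value theorem enter, and one must take care that the overshoot can be invoked along a sequence $t\downarrow0$ to conclude $\rho(y,R)=0$ rather than merely a small positive bound.
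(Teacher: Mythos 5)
Your proof is correct and follows essentially the same route as the paper's: the paper likewise uses \asref{a1} together with path continuity and an approximating sequence in $R$ to show immediate entry from boundary points (handling isolated points of $R$ via $T^x_x=0$, exactly your $y\in R$ case), and then invokes the strong Markov property at $\rho(x,\overline R)$, noting $X_{\rho(x,\overline R)}\in\partial R$, to treat starting points outside $\overline R$. Your only departure is organizational—packaging the pointwise statement as a sublemma for all $y\in\overline R$ (with the case split $y\in R$ versus $y\in\overline R\setminus R$) rather than the paper's interior/boundary/exterior casework—which does not change the substance of the argument.
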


\begin{proof}
Fix $x\in\X$. If $x$ is an interior point of $R$, by definition $\rho(x,R)=\rho(x,\overline R)=0$. 
If $x\in \partial R$, there are two cases.
\begin{itemize}[leftmargin=*]
\item {\bf Case I:} There exist $\{x_n\}_{n\in\N}$ in $R$ such that $x_n\to x$. For $\P^x$-a.e. $\omega\in\Omega$, \asref{a1} implies that $\{X^x_s(\omega) : s\in[0,\eps]\}$ must intersect $\{x_n\}_{n\in\N}$, for all $\eps>0$. This implies $\rho(x,R)=0$ $\P^x$-a.s. Since $\rho(x,\overline R)\le \rho(x,R)$ by definition, we get  $\rho(x,\overline R)=\rho(x,R)=0$ $\P^x$-a.s. 
\item {\bf Cases II:} $x$ is an isolated point of $R$, i.e. $x\in R$ and there exists $\eps>0$ such that $(x-\eps,x+\eps)\cap R = \{x\}$. By Remark~\ref{rem:T^x_x=0}, $T^x_x = 0$ $\P^x$-a.s. It follows that $\rho(x,R)=0$, and thus $\rho(x,\overline R) =0$, $\P^x$-a.s. 
\end{itemize}
Finally, if $x\notin \overline R$, consider the random variable 
$Y:= X^x_{\rho(x,\overline R)}.$ 
For any $\omega\in\Omega$, since $X$ is a continuous process, $Y(\omega)\in\partial R$. From the analysis above, we have $\P^{Y(\omega)}[\rho(Y(\omega),R)=0] =1$. By the strong Markov property of $X$, 
\[
\P^x\left[\rho(X_{\rho(x,\overline R)},R)=0\ \middle|\ \F_{\rho(x,\overline R)}\right] (\omega) = \P^{Y(\omega)}[\rho(Y(\omega),R)=0] =1,\quad \hbox{for $\P^x$-a.e. $\omega\in\Omega$}.
\]
Taking expectation on both sides yields $\P^x[\rho(X_{\rho(x,\overline R)},R)=0]=1$. It follows that $\rho(x,R)(\omega)= \rho(x,\overline R) (\omega) + \rho(X^x_{\rho(x,\overline R)}, R)(\omega) =  \rho(x,\overline R) (\omega)$ for $\P^x$-a.e. $\omega\in\Omega$. 
\end{proof}

Lemma~\ref{lem:rho R = rho bar R} admits several useful implications.

\begin{remark}\label{rem:value unchanged}
Under \asref{a1}, for any $R\in\B(\X)$, since $\rho(x,R) = \rho(x,\overline R)$ $\P^x$-a.s. $\forall x\in\X$, by definition 
$J(x,R) = J(x,\overline R)$, and thus $V(x,R) = V(x,\overline R)$, $\forall x\in\X$. Moreover, with $J(x,R) = J(x,\overline R)$ for all $x\in\X$, \eqref{regions} shows that $S_R=S_{\overline R}$, $I_R=I_{\overline R}$, and $C_R=C_{\overline R}$.
\end{remark}

\begin{remark}\label{rem:R subset Theta R}
Under \asref{a1}, observe that $R\subseteq\Theta(R)$, for all $R\in\B(\X)$. Indeed, for any $x\in R$, since $\rho(x,R)=0$ $\P^x$-a.s. (Lemma~\ref{lem:rho R = rho bar R}), we have $J(x,R)=f(x)$, i.e. $x\in I_R$. It follows that $R\subseteq I_R$, and thus
\begin{equation}\label{Theta'}
\Theta(R)=S_R\cup \left(I_R \cap R\right) = S_R\cup R. 
\end{equation}
This, together with Proposition~\ref{prop:iteration}, gives a desirable result for finding equilibria: under \asref{a1}, 
every $R\in\B(\X)$ converges to an equilibrium via the fixed-point iteration.
Specifically, 
\[
\cE = \bigg\{\lim_{n\to\infty} \Theta^n(R) : R\in \B(\X) \bigg\}. 
\]
\end{remark}

\begin{lemma}\label{lem:bar R in E}
Suppose \asref{a1} holds. 
\begin{itemize}
\item [(i)] For any $R\in\cE$, $S_R = \emptyset$. 
\item [(ii)] For any $R\in\B(\X)$, $R\in\cE$ if and only if $\overline R\in\cE$. 
\end{itemize} 
\end{lemma}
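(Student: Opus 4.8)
The plan is to reduce both statements to two facts already packaged in Remarks~\ref{rem:value unchanged} and \ref{rem:R subset Theta R}: under \asref{a1} one has the simplified formula $\Theta(R)=S_R\cup R$ for every $R\in\B(\X)$ (since $R\subseteq I_R$), and passing to the closure leaves the three regions unchanged, so in particular $S_R=S_{\overline R}$. Almost all analytic content has already been absorbed into \lemref{lem:rho R = rho bar R} and these remarks, so the proof is mostly bookkeeping built on top of them.

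For part (i), I would argue as follows. Since $R\in\cE$ means $\Theta(R)=R$, the formula $\Theta(R)=S_R\cup R$ forces $S_R\subseteq R$. On the other hand, \remref{rem:R subset Theta R} also records that $R\subseteq I_R$ under \asref{a1}, because $\rho(x,R)=0$ $\P^x$-a.s. for $x\in R$ gives $J(x,R)=f(x)$, i.e. $x\in I_R$. The regions $S_R$ and $I_R$ are disjoint by their very definition in \eqref{regions} (strict inequality versus equality), so $S_R\cap R\subseteq S_R\cap I_R=\emptyset$. Combining $S_R\subseteq R$ with $S_R\cap R=\emptyset$ yields $S_R=\emptyset$.

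For part (ii), I would use part (i) together with $S_R=S_{\overline R}$. For the forward implication, if $R\in\cE$ then $S_R=\emptyset$ by (i), hence $S_{\overline R}=\emptyset$, and the formula gives $\Theta(\overline R)=S_{\overline R}\cup\overline R=\overline R$, i.e. $\overline R\in\cE$. For the converse, if $\overline R\in\cE$ then applying (i) to $\overline R$ gives $S_{\overline R}=\emptyset$, hence $S_R=\emptyset$, and $\Theta(R)=S_R\cup R=R$, i.e. $R\in\cE$.

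The only step requiring any care is the disjointness argument in part (i): one must observe that an equilibrium cannot have a nonempty stopping region precisely because, under \asref{a1}, every point of $R$ already lies in the indifference region. This is the single place where Assumption~\ref{a1} is genuinely invoked, through $\rho(x,R)=0$ on $R$; everything else is a formal manipulation of $\Theta(R)=S_R\cup R$ and the equality $S_R=S_{\overline R}$.
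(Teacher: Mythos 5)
Your proof is correct and follows essentially the same route as the paper: both rest on the identity $\Theta(R)=S_R\cup R$ and the inclusion $R\subseteq I_R$ under Assumption~\ref{a1}, together with the invariance $S_R=S_{\overline R}$ from Remark~\ref{rem:value unchanged}. Your part (i) is marginally more streamlined (the containment chain $S_R\subseteq R\subseteq I_R$ plus disjointness of $S_R$ and $I_R$, instead of the paper's case analysis over $x\in\overline R$ versus $x\notin\overline R$), but the ingredients and logic are the same.
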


\begin{proof}
(i) Fix $R\in\cE$. For any $x\in\X$, if $x\in\overline R$, then $\rho(x,R) =0$ $\P^x$-a.s. (Lemma~\ref{lem:rho R = rho bar R}), which implies $J(x,R)=f(x)$, i.e. $x\in I_R$. If $x\notin\overline R$, then $x\notin R = \Theta(R)= S_R\cup R$, thanks to $R\in\cE$ and \eqref{Theta'}. It follows that $S_R = \emptyset$. 

(ii) For any $R\in\cE$, by Remark~\ref{rem:value unchanged} and (i), $S_{\overline R}=S_R=\emptyset$. In view of \eqref{Theta'}, $\Theta(\overline R) = S_{\overline R}\cup  \overline{R}= \overline{R}$, i.e. $\overline R\in\cE$. By switching the roles of $R$ and $\overline R$, we can prove that $\overline R\in\cE$ implies $R\in\cE$. 
\end{proof}

Lemma~\ref{lem:bar R in E} (ii) and Remark~\ref{rem:value unchanged} convey an important message: to compare different equilibria in terms of their associated values, it suffices to restrict our attention to {\it closed} equilibria. After all, the closure of $R\in\cE$ remains an equilibrium, with the {\it same} values. In fact, $R$ and $\overline R$ induce the same stopping behavior, as $S_R$, $I_R$, and $C_R$ in \eqref{regions} stay intact after we take the closure of $R$. 

This additional closedness condition is the key to improving an equilibrium in a systematic fashion, as we will introduce now.


\subsection{Intersections of Equilibria}

Under appropriate integrability conditions, we show that the intersection of two closed equilibria is again an equilibrium, with {\it larger} values. 

\begin{proposition}\label{p2}
Suppose \asref{a1} holds. Assume additionally that for each $x\in\X$, 
\begin{equation}\label{e34}
\delta(t)f(X^x_t)\rightarrow 0\quad \hbox{as}\ t\rightarrow\infty\qquad \P^x\hbox{-a.s.},
\end{equation}
and 
\begin{equation}\label{e32}
\E^x\bigg[\sup_{t\in[0,\infty)}\delta(t)f(X_t)\bigg]<\infty.
\end{equation}
Then, for any $R$, $T\in\cE$ that are closed, we have
\begin{equation}\label{e6}
J(x,R\cap T)\geq J(x,R)\vee J(x,T),\quad\forall\,x\in\X.
\end{equation}
Hence, $R\cap T$ also belongs to $\cE$.
\end{proposition}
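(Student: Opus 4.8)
The plan is to mimic the single-step argument of Lemma~\ref{t3}, but to iterate it along an alternating sequence of hitting times that ``ping-pongs'' between $R$ and $T$ until it reaches $R\cap T$. Concretely, fix $x\in\X$, set $\tau_0:=0$, write $\tau^*:=\rho(x,R\cap T)$, and define recursively $\tau_{n+1}:=\tau_n+\rho(X_{\tau_n},T)$ for $n$ even and $\tau_{n+1}:=\tau_n+\rho(X_{\tau_n},R)$ for $n$ odd, so that $X_{\tau_n}\in R$ for odd $n$ and $X_{\tau_n}\in T$ for even $n$ (using that $R,T$ are closed together with Lemma~\ref{lem:rho R = rho bar R}). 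I would first check that $(\tau_n)$ is non-decreasing with $\tau_n\le\tau^*$: a new hitting time cannot overshoot $\tau^*$ because $X_{\tau^*}\in R\cap T$ already lies in both sets. Setting $\tau_\infty:=\lim_n\tau_n\le\tau^*$, continuity of $X$ forces $X_{\tau_\infty}\in\overline R\cap\overline T=R\cap T$ on $\{\tau_\infty<\infty\}$, whence $\tau_\infty\ge\tau^*$; thus $\tau_n\uparrow\tau^*$ (and on $\{\tau^*=\infty\}$ this reads $\tau_n\uparrow\infty$). This is precisely where closedness of $R$ and $T$ is indispensable.

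Next I would establish, by induction on $n$, the key inequality
\[
J(x,R)\ \le\ \E^x\!\big[\delta(\tau_n)f(X_{\tau_n})\big],\qquad n\ge1.
\]
The base case $n=1$ is the definition of $J(x,R)$. For the inductive step with $n$ odd (so $X_{\tau_n}\in R$), I split on the event $\{X_{\tau_n}\in T\}=\{X_{\tau_n}\in R\cap T\}$, where $\tau_{n+1}=\tau_n$ and the integrand is unchanged, and its complement, where $X_{\tau_n}\notin T$. On the latter, since $T\in\cE$ gives $S_T=\emptyset$ by Lemma~\ref{lem:bar R in E}(i), we have $f(X_{\tau_n})\le J(X_{\tau_n},T)$; applying the strong Markov property and then the log sub-additivity \eqref{e1} together with $f\ge0$, exactly as in \eqref{ee1}--\eqref{ee3}, yields $\delta(\tau_n)f(X_{\tau_n})\le\E^x[\delta(\tau_{n+1})f(X_{\tau_{n+1}})\mid\F_{\tau_n}]$. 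Taking expectations completes the step; the even case is identical with $R$ and $T$ interchanged.

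Then I would pass to the limit $n\to\infty$. On $\{\tau^*<\infty\}$, continuity of $X$, $\delta$ and $f$ gives $\delta(\tau_n)f(X_{\tau_n})\to\delta(\tau^*)f(X_{\tau^*})$, while on $\{\tau^*=\infty\}$ condition \eqref{e34} gives $\delta(\tau_n)f(X_{\tau_n})\to0$, matching the $\limsup$ convention defining $\delta(\tau^*)f(X_{\tau^*})$ there. Since $\delta(\tau_n)f(X_{\tau_n})\le\sup_{t\ge0}\delta(t)f(X_t)$, which is integrable by \eqref{e32}, dominated convergence gives $\E^x[\delta(\tau_n)f(X_{\tau_n})]\to J(x,R\cap T)$, hence $J(x,R)\le J(x,R\cap T)$. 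Running the same argument with the roles of $R$ and $T$ swapped yields $J(x,T)\le J(x,R\cap T)$, which is \eqref{e6}. Finally, to conclude $R\cap T\in\cE$, by \eqref{Theta'} it suffices to show $S_{R\cap T}=\emptyset$: for $x\in R\cap T$ one has $J(x,R\cap T)=f(x)$, while for $x\notin R\cap T$, say $x\notin R$, the equilibrium property of $R$ gives $f(x)\le J(x,R)$, and \eqref{e6} gives $J(x,R)\le J(x,R\cap T)$, so in all cases $f(x)\le J(x,R\cap T)$. The main obstacle is the limit passage in the third paragraph: verifying that the ping-pong times genuinely converge to $\rho(x,R\cap T)$ (which hinges on closedness of $R$ and $T$) and that the tail term is controlled, which is exactly the role played by the regularity condition \eqref{e34} and the integrability condition \eqref{e32}.
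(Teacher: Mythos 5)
Your proof is correct and rests on the same mechanism as the paper's: alternating (``ping-pong'') hitting times of $R$ and $T$, the one-step estimate of Lemma~\ref{t3} (equilibrium property of the set not yet reached, plus \eqref{e1}, plus the strong Markov property), closedness of $R$ and $T$ to control the limiting hitting time, and \eqref{e34}/\eqref{e32} to pass to the limit by dominated convergence. The bookkeeping, however, is organized differently, and your version is leaner: the paper tracks the difference $J(x,R\cap T)-J(x,T)$ through a telescoping inequality, which forces it to carry products of indicators $1_{A_1}\cdots 1_{A_n}$ and discount factors $\delta(\tau_1-\tau_0)\cdots\delta(\tau_n-\tau_{n-1})$, to introduce auxiliary times $\sigma_n=\inf\{t>\tau_n:\ X_t\in R\cap T\}$, and to prove that a remainder term $H_n\to 0$ a.s.\ by a two-case analysis; you instead show that $\E^x[\delta(\tau_n)f(X_{\tau_n})]$ is non-decreasing in $n$ and prove directly that $\tau_n\uparrow\rho(x,R\cap T)$ a.s.\ (the ``$\le$'' because $X_{\rho(x,R\cap T)}\in R\cap T$ prevents overshooting, the ``$\ge$'' because closedness forces $X_{\lim_n\tau_n}\in R\cap T$), so that dominated convergence identifies the limit of the monotone chain as $J(x,R\cap T)$ in one stroke, with no $\sigma_n$ and no remainder term. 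One slip you should fix: as written, your recursion hits $T$ when $n$ is even and $R$ when $n$ is odd, which gives $\tau_1=\rho(x,T)$; this contradicts your stated base case $\E^x[\delta(\tau_1)f(X_{\tau_1})]=J(x,R)$ and your claim that $X_{\tau_n}\in R$ for odd $n$. Swapping the roles of $R$ and $T$ in the recursion (or, equivalently, in all subsequent statements) makes everything consistent, and the symmetric run of the argument then yields the other half of \eqref{e6}, exactly as in the paper.
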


\begin{proof}
Fix $x\in\X$. Throughout this proof, for simplicity of notation, we define, for all $n=0,1,\dotso$, the following:
\begin{itemize}
\item [1.] $\tau_0:=0$,\quad $\tau_{2n+1}:=\inf\{t>\tau_{2n}:\ X_t^x\in T\}$,\quad $\tau_{2n+2}:=\inf\{t>\tau_{2n+1}:\ X_t^x\in R\}$;
\item [2.] $A_n:=\{\omega\in\Omega :\ \tau_n(\omega)<\infty,\ X_{\tau_n}^x(\omega)\notin R\cap T\}$.
\end{itemize}
Let $v:=\rho(x,R\cap T)$. By the definition of $A_1$, we have 
\begin{align}
\notag J(x,R\cap T)-J(x,T) &=\E^x\left[1_{A_1}\left(\delta(v)f\left(X_v\right)-\delta(\tau_1)f\left(X_{\tau_1}\right)\right)\right]\\
\notag &=\E^x\left[1_{A_1}\delta(\tau_1)\left(\frac{\delta(v)}{\delta(\tau_1)}f\left(X_v\right)-f\left(X_{\tau_1}\right)\right)\right]\\
\notag&\geq\E^x\left[1_{A_1}\delta(\tau_1)\left(\delta(v-\tau_1)f\left(X_v\right)-f\left(X_{\tau_1}\right)\right)\right]\\
\label{e2}&=\E^x\left[1_{A_1}\delta(\tau_1)\left(\E^x\left[\delta(v-\tau_1)f\left(X_v\right)|\mathcal{F}_{\tau_1}\right]-f\left(X_{\tau_1}\right)\right)\right]
\end{align}
where the inequality follows from \eqref{e1} and $f$ being nonnegative. The strong Markov property of $X$ implies that
\begin{align}
\E^x\left[\delta(v-\tau_1)f\left(X_v\right)\Big|\ \mathcal{F}_{\tau_1}\right] 1_{A_1}
&=\E^{X_{\tau_1}^x}\left[\delta(\rho(X_{\tau_1}^x,R\cap T))f\left(X_{\rho(X_{\tau_1}^x,R\cap T)}\right)\right] 1_{A_1} \nonumber\\
&= J(X_{\tau_1}^x,R\cap T) 1_{A_1}.\label{Mark prop}
\end{align}
On the event $A_1$, note that the closedness of $T$ entails $X_{\tau_1}^x\in T$. Thus,  the fact that $X_{\tau_1}^x(\omega)\notin R\cap T$ implies $X_{\tau_1}^x\notin R$. Since $R$ is an equilibrium, by \eqref{Theta} 
\begin{equation}\label{f<J}
f(X_{\tau_1}^x) \le J(X_{\tau_1}^x,R)\quad \hbox{on}\ A_1.
\end{equation}
With \eqref{Mark prop} and \eqref{f<J}, \eqref{e2} yields
\begin{equation}\label{e3}
J(x,R\cap T)-J(x,T)\geq\E^x\left[1_{A_1}\delta(\tau_1)\left(J(X_{\tau_1},R\cap T)-J(X_{\tau_1},R)\right)\right].
\end{equation}

In the following, we will carry out the above argument recursively. First, repeating the above argument for $J(X_{\tau_1}^x,R\cap T)-J(X_{\tau_1}^x,R)$, instead of $J(x,R\cap T)-J(x,T)$, we obtain 
\begin{align}\label{e4}
J(X_{\tau_1}^x,R\cap T)&-J(X_{\tau_1}^x,R)\nonumber \\
 &\geq \E^{X_{\tau_1}^x}\left[1_{A(X_{\tau_1}^x)}\delta(\rho(X_{\tau_1}^x,R))\left(J(X_{\rho(X_{\tau_1}^x,R)},R\cap T)-J(X_{\rho(X_{\tau_1}^x,R)},T)\right)\right],
\end{align}
where
$$A(y):=\left\{\omega\in\Omega:\ \rho(y,R)(\omega)<\infty,\ X_{\rho(y,R)}^y(\omega)\notin R\cap T\right\},\quad \hbox{for}\ y\in\X.$$
Denote the expectation in \eqref{e4} by $I$. By the strong Markov property of $X$,
$$I\cdot 1_{A_1}=\E^x\left[1_{A_2}\delta(\tau_2-\tau_1)\left(J(X_{\tau_2},R\cap T)-J(X_{\tau_2},T)\right)\ \big|\ \mathcal{F}_{\tau_1}\right]\cdot 1_{A_1}.$$
This, together with \eqref{e3} and \eqref{e4}, implies 
\begin{equation}\notag
J(x,R\cap T)-J(x,T) \geq \E^x\left[1_{A_1}1_{A_2}\delta(\tau_1)\delta(\tau_2-\tau_1)\left(J(X_{\tau_2},R\cap T)-J(X_{\tau_2},T)\right)\right].
\end{equation}
Continuing this procedure, we can get for all $n=1,2,\dotso$,
\begin{align}
\notag &J(x,R\cap T)-J(x,T)\geq \E^x\left[1_{A_1}\dotso 1_{A_n}\delta(\tau_1-\tau_0)\dotso\delta(\tau_n-\tau_{n-1})\left(J(X_{\tau_n},R\cap T)-J(X_{\tau_n},U)\right)\right]
\end{align}
where $U=R$ if $n$ is odd and $U=T$ if $n$ is even. Define
$$\sigma_n:=\inf\{t> \tau_n:\ X_t^x\in R\cap T\}.$$
We can rewrite the previous inequality as follows: for all $n=1,2,\dotso$,
\begin{align}
\notag &\hspace{0.2in}J(x,R\cap T)-J(x,T)\\
\label{e5}&\ge \E^x\left[1_{A_1}\dotso 1_{A_n}\delta(\tau_1-\tau_0)\dotso\delta(\tau_n-\tau_{n-1})\left(\delta(\sigma_n-\tau_n)f(X_{\sigma_n})-\delta(\tau_{n+1}-\tau_n)f(X_{\tau_{n+1}})\right)\right].
\end{align}
Denote by $H_n$ the random variable inside the expectation in \eqref{e5}. We claim that $H_n\rightarrow 0$ $\P^x$-a.s. Let us consider the two distinct cases.
\begin{itemize}[leftmargin=*]
\item \textbf{Case I:} $\lim_{n\to \infty}\tau_n=\infty$. Since $\sigma_n\ge \tau_n$ by definition, $\lim_{n\to \infty}\sigma_n=\infty$. By \eqref{e1} and \eqref{e34},
\[
|H_n|\leq \delta(\sigma_n)f(X_{\sigma_n}^x)+\delta(\tau_{n+1})f(X_{\tau_{n+1}}^x)\rightarrow 0, 
\]
and thus $H_n\rightarrow 0$.
\item \textbf{Case II:} $\lim_{n\rightarrow\infty}\tau_n<\infty$. Set $\tau:=\lim_{n\rightarrow\infty}\tau_n$. Since $X$ is a continuous process, $X_{\tau_n}\rightarrow X_\tau$. For every $n$ even, since $R$ is closed, we have $X_{\tau_{n}}\in R$ by the definition of $\tau_n$. This implies $X_\tau\in R$, again by the closedness of $R$. Similarly, for every $n$ odd, since $T$ is closed, we have $X_{\tau_{n}}\in T$ by the definition of $\tau_n$. This implies $X_\tau\in T$, again by the closedness of $T$. Hence, $X_\tau\in R\cap T$, and thus $\tau_n\leq\sigma_n\leq\tau$. It follows that $\sigma_n\rightarrow\tau$. Thanks to the continuity of $\delta$ and $f$, 
$$\delta(\sigma_n-\tau_n)f(X_{\sigma_n})-\delta(\tau_{n+1}-\tau_n)f(X_{\tau_{n+1}})\ \rightarrow\ f(X_\tau)-f(X_\tau)=0,$$
which implies $H_n\rightarrow 0$.
\end{itemize}
Now, by the fact that $H_n\to 0$ $\P^x$-a.s. and 
$$|H_n|\leq 2\sup_{t\in[0,\infty)}\delta(t)f\left(X_t^x\right),$$
we can apply the dominated convergence theorem to \eqref{e5} as $n\to\infty$, thanks to \eqref{e32}. This yields 
$$J(x,R\cap T)\geq J(x,T).$$
By switching the roles of $R$ and $T$ in the proof above, we can also obtain
$J(x,R\cap T)\geq J(x,R).$ Then \eqref{e6} follows.

It remains to show that $R\cap T$ is an equilibrium. Fix $x\notin R\cap T$. If $x\notin R$, by the fact that $R$ is an equilibrium and \eqref{e6}, $f(x)\le J(x,R)\leq J(x,R\cap T)$. This implies $x\in I_{R\cap T}\cup C_{R\cap T}$. By \eqref{Theta}, $x\notin R\cap T$, and $x\in I_{R\cap T}\cup C_{R\cap T}$, we conclude that $x\notin \Theta(R\cap T)$.
If $x\notin T$, the same argument gives $x\notin\Theta(R\cap T)$. We therefore conclude that $\Theta(R\cap T)\subseteq R\cap T$. Since $R\cap T \subseteq \Theta(R\cap T)$ under Assumption~\ref{a1} (see Remark~\ref{rem:R subset Theta R}), we have $\Theta(R\cap T) = R\cap T$, i.e. $R\cap T\in\cE$.
\end{proof}

\begin{remark}
The closedness of $R$, $T\in\cE$ is indispensable for Proposition~\ref{p2}. Particularly, it is used to establish $X_{\tau_1}^x\notin R$ above \eqref{f<J}, as well as ``$H_n\to 0$'' in Case II of the proof. 

In fact, without the closedness condition, Proposition~\ref{p2} fails in a trivial way. For instance, suppose Assumption~\ref{a1} holds, and the payoff function $f$ is strictly positive and satisfies \eqref{e34} and \eqref{e32}. We can construct two equilibria, without closedness, whose intersection is no longer an equilibrium. Take $R_1 := \X\cap \Q$ and $R_2:=\X\cap \Q^c$. Since $\overline R_1=\overline R_2= \X\in \cE$ (Remark~\ref{rem:trivial equilibrium}), Lemma~\ref{lem:bar R in E} (ii) asserts $R_1\in\cE$ and $R_2\in\cE$. By construction, $R_1\cap R_2 = \emptyset$. Since \eqref{e34} implies $J(x,\emptyset) = 0 < f(x)$ for all $x\in\X$, $R_1\cap R_2 =\emptyset$ is not an equilibrium.    
\end{remark}




The next step is to investigate if a {\it countable} intersection of equilibria is again an equilibrium. To this end, we need the following technical result. 

\begin{lemma}\label{l5}
Let $(S_n)_{n\in\N}$ in $\mathcal{B}(\X)$ be a non-increasing sequence of closed sets. For any $x\in\X$,
$$\lim_{n\rightarrow\infty}\rho(x,S_n)=\rho(x,S_\infty)\ \ \text{$\P^x$-a.s.},\quad\text{with}\ \ S_\infty:=\bigcap_{n\in\N} S_n.$$
\end{lemma}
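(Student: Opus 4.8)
The plan is to prove the two inequalities $\lim_n \rho(x,S_n) \le \rho(x,S_\infty)$ and $\lim_n \rho(x,S_n) \ge \rho(x,S_\infty)$ separately, the first being elementary and the second carrying the real content. Write $\rho_n := \rho(x,S_n)$ and $\rho_\infty := \rho(x,S_\infty)$. Since $(S_n)$ is non-increasing, the sets $\{t>0 : X^x_t \in S_n\}$ shrink in $n$, so $\rho_n$ is non-decreasing; moreover $S_\infty \subseteq S_n$ forces $\rho_n \le \rho_\infty$ for every $n$. Hence $\rho_* := \lim_n \rho_n = \sup_n \rho_n$ exists in $[0,\infty]$ and satisfies $\rho_* \le \rho_\infty$. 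It remains to establish $\rho_* \ge \rho_\infty$ $\P^x$-a.s.

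For the reverse inequality I would argue pathwise on a set of full $\P^x$-measure. On $\{\rho_* = \infty\}$ there is nothing to prove, since then $\rho_\infty \ge \rho_* = \infty$. On $\{\rho_* < \infty\}$, every $\rho_n \le \rho_* < \infty$, and because each $S_n$ is closed and $X$ has continuous paths, the infimum defining $\rho_n$ is attained in the sense that $X^x_{\rho_n} \in S_n$ (there are times decreasing to $\rho_n$ at which $X^x \in S_n$, so closedness and continuity give the claim). Fixing $m$ and letting $n \to \infty$ through $n \ge m$, we have $X^x_{\rho_n} \in S_n \subseteq S_m$ and $\rho_n \to \rho_*$, so continuity of $X$ together with closedness of $S_m$ gives $X^x_{\rho_*} \in S_m$; as $m$ is arbitrary, $X^x_{\rho_*} \in S_\infty$. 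When $\rho_* > 0$ this already finishes the argument: $\rho_*$ is then an admissible competitor in the infimum defining $\rho_\infty$, so $\rho_\infty \le \rho_*$, and equality follows.

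The delicate case, and the main obstacle, is $\{\rho_* = 0\}$. Here $\rho_n = 0$ for all $n$, and the computation above only yields $X^x_0 = x \in S_\infty$; this by itself does not force $\rho_\infty = 0$, because $\rho_\infty$ is an infimum over \emph{strictly positive} times. Indeed, without a diffusivity hypothesis the statement is false (for a purely drifting path one has $\rho(x, S_n) = 0$ for each $S_n := [x, x+1/n]$ while $S_\infty = \{x\}$ and $\rho(x, \{x\}) = \infty$), so continuity alone cannot close the gap. To handle it I would invoke \asref{a1} through \remref{rem:T^x_x=0}: since $x \in S_\infty$, the inclusion $\{t>0 : X^x_t = x\} \subseteq \{t>0 : X^x_t \in S_\infty\}$ gives $\rho_\infty \le T^x_x$, and \remref{rem:T^x_x=0} asserts $T^x_x = 0$ $\P^x$-a.s. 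Thus $\rho_\infty = 0 = \rho_*$ on this event as well. The only remaining bookkeeping is that all the exceptional events invoked above (where $X^x_{\rho_n} \notin S_n$ for some $n$, or where $T^x_x > 0$) are $\P^x$-null and countably many, so their union is still null, which completes the proof.
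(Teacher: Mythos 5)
Your proof is correct, and it follows the same overall decomposition as the paper's: the ``$\le$'' direction is immediate from $S_\infty\subseteq S_n$, and the ``$\ge$'' direction is obtained by showing that $X^x$ evaluated at the limiting time lies in $S_\infty$, so that this time becomes a competitor in the infimum defining $\rho(x,S_\infty)$. The two write-ups differ in execution in ways worth recording. For the key step you argue directly: for $n\ge m$ one has $X^x_{\rho_n}\in S_n\subseteq S_m$, so path continuity and closedness of $S_m$ yield $X^x_{\rho_*}\in S_m$, whence $X^x_{\rho_*}\in S_\infty$. The paper instead applies Cantor's intersection theorem twice, first to the nested compacta $S_n\cap[X^x_\tau(\omega)-1/m,\,X^x_\tau(\omega)+1/m]$ and then to the family $S_\infty\cap[X^x_\tau(\omega)-1/m,\,X^x_\tau(\omega)+1/m]$ over $m$; your route is more elementary and reaches the same conclusion. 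For the degenerate case, which you rightly identify as the only real obstacle (since $\rho$ is an infimum over \emph{strictly positive} times, $X^x_{\rho_*}\in S_\infty$ says nothing when $\rho_*=0$): the paper disposes of it by splitting deterministically on whether $x\in S_\infty$ and asserting $\rho(x,S_n)=\rho(x,S_\infty)=0$ in that case --- an assertion that silently uses \asref{a1} through \lemref{lem:rho R = rho bar R}, even though the lemma's statement does not list \asref{a1} as a hypothesis. You instead split pathwise on the event $\{\rho_*=0\}$, deduce $x\in S_\infty$ there, and invoke \remref{rem:T^x_x=0} explicitly; your pure-drift counterexample with $S_n=[x,x+1/n]$ shows that this appeal to \asref{a1} is genuinely unavoidable, a point the paper's write-up leaves implicit. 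In short, your argument uses a lighter toolkit and makes the role of the diffusivity assumption transparent, while being logically equivalent to the paper's Cantor-based argument where the two overlap.
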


\begin{proof}
Take an arbitrary $x\in\X$. The ``$\le$'' relation holds trivially, as $S_n\supseteq S_\infty$ for all $n\in\N$. To prove the ``$\ge$'' relation, since it holds trivially when $\lim_{n\rightarrow\infty}\rho(x,S_n)=\infty$, we assume below that $\lim_{n\rightarrow\infty}\rho(x,S_n)<\infty$. Define $\tau_n:=\rho(x,S_n)$ for all $n\in\N$, and set $\tau:=\lim_{n\rightarrow\infty}\tau_n$. If $x\in S_\infty$, $\rho(x,S_n) = \rho(x,S_\infty) = 0$ for all $n\in\N$, and thus the desired result trivially holds. Suppose $x\notin S_\infty$. Fix $\omega\in\Omega$.
For any $m\in\N$, since $X$ is a continuous process, we have $X^x_{\tau_n}(\omega)\rightarrow X^x_\tau(\omega)$, and thus $X^x_{\tau_n}(\omega)\in [X^x_\tau(\omega)-1/m,X^x_\tau(\omega)+1/m]$ for $n$ large enough. It follows that $S_n\cap[X^x_\tau(\omega)-1/m,X^x_\tau(\omega)+1/m]\neq\emptyset$ for $n$ large enough, and thus for all $n$. Now, since $(S_n)_{n\in\N}$ is a non-increasing sequence of closed sets, we can apply Cantor's intersection theorem and obtain
\[
S_\infty\cap[X^x_\tau(\omega)-1/m,X^x_\tau(\omega)+1/m]=\bigcap_{n\in\N} \left(S_n \cap [X^x_\tau(\omega)-1/m,X^x_\tau(\omega)+1/m]\right)\neq\emptyset,\quad \forall m\in\N.
\]
By definition, $S_\infty$ is closed. We can thus apply Cantor's intersection theorem again and get
\[
S_\infty\cap \{X^x_\tau(\omega)\} = \bigcap_{m\in\N} \left(S_\infty \cap [X^x_\tau(\omega)-1/m,X^x_\tau(\omega)+1/m]\right)\neq\emptyset. 
\]
That is, $X^x_\tau(\omega)\in S_\infty$. It follows that $\rho(x,S_\infty)(\omega)\le \tau(\omega)$, as desired. 
\end{proof}

\begin{proposition}\label{p2'}
Suppose \asref{a1}, \eqref{e34}, and \eqref{e32} hold. For any sequence $(R_n)_{n\in\N}$ of closed equilibria, $R:= \bigcap_{n\in\N} R_n$ belongs to $\cE$. Moreover, for each $n\in\N$,  
\begin{equation*}
J(x,R)\geq J(x,R_n),\quad\forall\,x\in\X.
\end{equation*}
\end{proposition}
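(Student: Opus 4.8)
The plan is to realize the countable intersection $R=\bigcap_{n\in\N}R_n$ as a monotone limit of \emph{finite} intersections, so that Proposition~\ref{p2} handles the algebra of the equilibrium property one step at a time, while Lemma~\ref{l5} handles the passage to the limit. Concretely, I would set $T_n:=R_1\cap\cdots\cap R_n$. Each $T_n$ is a finite intersection of closed sets, hence closed; by induction on $n$ using Proposition~\ref{p2}, each $T_n$ is an equilibrium, and the same proposition gives $J(x,T_n)\ge J(x,T_{n-1})\vee J(x,R_n)$. Thus $n\mapsto J(x,T_n)$ is non-decreasing and $J(x,T_n)\ge J(x,R_k)$ for every $k\le n$. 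The sequence $(T_n)$ is non-increasing of closed sets with $\bigcap_n T_n=R$ (so in particular $R$ is Borel and $J(x,R)$ is well defined via \eqref{J}), whence Lemma~\ref{l5} yields $\rho(x,T_n)\to\rho(x,R)$ $\P^x$-a.s.\ for each fixed $x\in\X$.

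The crux is to upgrade $\rho(x,T_n)\to\rho(x,R)$ to $J(x,T_n)\to J(x,R)$. Fixing $\omega$ and writing $v_n:=\rho(x,T_n)(\omega)\uparrow v:=\rho(x,R)(\omega)$, I would split into two regimes. If $v<\infty$, path-continuity of $X$ gives $X^x_{v_n}\to X^x_v$, and the continuity of $\delta$ and $f$ gives $\delta(v_n)f(X^x_{v_n})\to\delta(v)f(X^x_v)$. If $v=\infty$, then $v_n\to\infty$ and \eqref{e34} forces $\delta(v_n)f(X^x_{v_n})\to 0$, which matches the convention $\delta(\rho)f(X_\rho):=\limsup_{t\to\infty}\delta(t)f(X_t)=0$ on $\{\rho=\infty\}$. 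In either case the integrand converges $\P^x$-a.s.; since it is dominated by $\sup_{t\in[0,\infty)}\delta(t)f(X^x_t)$, which is integrable by \eqref{e32}, the dominated convergence theorem gives $J(x,T_n)\to J(x,R)$. Combined with the monotonicity above, this yields the ``moreover'' inequality $J(x,R)=\lim_n J(x,T_n)\ge J(x,R_k)$ for every $k\in\N$.

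Finally, to show $R\in\cE$: since $R$ is closed, Remark~\ref{rem:R subset Theta R} gives $R\subseteq\Theta(R)=S_R\cup R$, so it remains to check $S_R=\emptyset$. For $x\in R$ one has $\rho(x,R)=0$, hence $f(x)=J(x,R)$ and $x\notin S_R$. For $x\notin R$ there is some $n_0$ with $x\notin R_{n_0}$; because $R_{n_0}$ is an equilibrium, $x\notin R_{n_0}=\Theta(R_{n_0})=S_{R_{n_0}}\cup R_{n_0}$, so $x\notin S_{R_{n_0}}$, i.e.\ $f(x)\le J(x,R_{n_0})\le J(x,R)$ by the inequality just established; again $x\notin S_R$. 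Hence $S_R=\emptyset$ and $\Theta(R)=R$.

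I expect the main obstacle to be the limit interchange in the second paragraph, where one must carefully reconcile the two regimes $v<\infty$ and $v=\infty$ with the $\limsup$ convention on $\{\rho=\infty\}$, and confirm that \eqref{e32} is precisely the domination that licenses dominated convergence (with \eqref{e34} supplying the a.s.\ pointwise limit in the divergent case). Everything else is bookkeeping: a clean induction off Proposition~\ref{p2} for the finite intersections, and a direct application of Lemma~\ref{l5} for the monotone limit of hitting times.
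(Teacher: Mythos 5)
Your proposal is correct and follows essentially the same route as the paper's proof: finite intersections $T_n=R_1\cap\cdots\cap R_n$ handled inductively via Proposition~\ref{p2}, Lemma~\ref{l5} plus dominated convergence (with \eqref{e32} as the dominating bound and \eqref{e34} resolving the $\rho=\infty$ regime) to pass to the limit, and then the pointwise check of the fixed-point property. The only differences are organizational — you establish $J(x,T_n)\to J(x,R)$ and the ``moreover'' inequality first and then deduce $S_R=\emptyset$, while the paper runs the dominated-convergence argument separately on $R$ and $R^c$ before stating the inequality — and your more explicit treatment of the two regimes $v<\infty$ and $v=\infty$ is a fair expansion of what the paper leaves implicit.
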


\begin{proof}
We can rewrite $R$ as follows: 
\[
R=\bigcap_{n\in\N}T_n,\quad \hbox{with}\ \ T_n:=\bigcap_{i=1,2,...,n} R_i,\  n\in\N. 
\]
By \propref{p2}, $T_n\in\cE$ for all $n\in\N$, and $J(x,T_n)$ is non-decreasing in $n$, for each $x\in\X$.
 
For any $x\in R$, since $x\in T_n$ for all $n\in\N$, the fact that every $T_n$ is an equilibrium implies 
$f(x)\geq J(x,T_n),\ \forall n\in\N$.
By \eqref{e32} and \lemref{l5}, we may apply the dominated convergence theorem and get
$f(x)\geq J(x,R)$,
which shows that $x\in S_{R}\cup I_{R}$. Thus, $R\subseteq S_{R}\cup I_{R}$, which implies  $R\subseteq S_{R}\cup (I_{R}\cap R) = \Theta(R)$. On the other hand, for any $x\notin R$, there exists $N\in\N$ such that $x\notin T_n$ for $n\geq N$. Since $T_n\in\cE$ for all $n\ge N$,
$f(x)\le J(x,T_n)\ \hbox{for all}\ n\geq N$.
By the dominated convergence theorem (thanks again to \eqref{e32} and \lemref{l5}), we obtain $f(x)\le J(x,R)$, i.e. $x\in I_{R}\cup C_{R}$. This, together with $x\notin R$, implies $x\notin \Theta(R)$. We thus obtain $(R)^c\subseteq \Theta(R)^c$,  and can now conclude that $R = \Theta(R)$, i.e. $R\in\cE$. Finally, for each $x\in\X$, since $J(x,T_n)$ is non-decreasing in $n$, we have
\[
J(x,R) = \lim_{n\to\infty} J(x,T_n) \ge J(x,T_n)\ge J(x,R_n) \quad \forall n\in\N,
\] 
where the last inequality follows from \propref{p2}. 
\end{proof}


\subsection{The Main Result}

\begin{theorem}\label{t1}
Suppose Assumption \ref{a1}, \eqref{e34}, and \eqref{e32} hold. Then, 
\begin{equation}\label{R*}
R^* := \bigcap_{R\in\cE,\ R\ \text{closed}} R.
\end{equation}
is an optimal equilibrium.
\end{theorem}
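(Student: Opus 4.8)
The plan is to show directly that the closed set $R^*$ defined in \eqref{R*} is an equilibrium, and then to invoke the ``smaller is better'' principle to conclude optimality. The candidate is dictated by \propref{prop:first result} and \corref{coro:smaller better}: among equilibria smaller ones dominate, so the natural optimum is the smallest closed equilibrium, i.e. the intersection of all closed equilibria. I would first observe that $R^*$, being an arbitrary intersection of closed sets, is itself closed and hence Borel; this disposes of the measurability concern raised after \propref{prop:first result} for the possibly non-Borel set $\bigcap_{R\in\cE}R$. The substantive step is to prove $R^*\in\cE$, and the difficulty is that $R^*$ is an \emph{uncountable} intersection, to which \propref{p2'} does not apply directly.

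To overcome this, I would reduce the uncountable intersection to a countable one using the fact that $\X\subseteq\R$ is second countable, hence Lindel\"of. Writing $\cC:=\{R\in\cE : R\ \text{closed}\}$ (nonempty, since $\X\in\cC$ by \remref{rem:trivial equilibrium}), we have $R^*=\bigcap_{R\in\cC}R$, so
\[
(R^*)^c=\bigcup_{R\in\cC}R^c,
\]
which exhibits the open set $(R^*)^c$ as a union of relatively open sets $R^c\subseteq(R^*)^c$. Since $(R^*)^c$, as a subspace of $\R$, is Lindel\"of, there is a countable subfamily $(R_n)_{n\in\N}\subseteq\cC$ with $\bigcup_{n\in\N}R_n^c=(R^*)^c$, i.e. $R^*=\bigcap_{n\in\N}R_n$. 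As each $R_n$ is a closed equilibrium, \propref{p2'} then yields $R^*\in\cE$, together with $J(x,R^*)\ge J(x,R_n)$ for all $n\in\N$ and $x\in\X$.

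It remains to check optimality. Fix any $R\in\cE$ and $x\in\X$. By \lemref{lem:bar R in E}(ii) the closure $\overline R$ is again an equilibrium, and it is closed, so $\overline R\in\cC$ and hence $R^*\subseteq\overline R$. Since $R^*\in\cE$ and $R^*\subseteq\overline R$, \lemref{t3} gives $J(x,R^*)\ge J(x,\overline R)$, while \remref{rem:value unchanged} ensures $J(x,\overline R)=J(x,R)$. Combining, $V(x,R^*)=f(x)\vee J(x,R^*)\ge f(x)\vee J(x,R)=V(x,R)$. As $R\in\cE$ and $x\in\X$ were arbitrary, $R^*$ is an optimal equilibrium.

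The main obstacle is precisely the passage from the uncountable to a countable intersection; once the Lindel\"of argument supplies the sequence $(R_n)$, everything else is an assembly of the already-established \lemref{t3}, \lemref{lem:bar R in E}, \remref{rem:value unchanged}, and \propref{p2'}. A pleasant feature is that the same topological reduction simultaneously makes \propref{p2'} applicable and guarantees that $R^*$ is a countable intersection of closed (in particular Borel) sets, so no separate measurability argument is needed.
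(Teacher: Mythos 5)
Your proof is correct, and it follows the same overall architecture as the paper's: reduce the uncountable intersection to a countable one, invoke \propref{p2'} to conclude $R^*\in\cE$, and then obtain optimality from \lemref{lem:bar R in E}(ii), \lemref{t3}, and \remref{rem:value unchanged}. The only genuine difference is the device used for the reduction step. The paper observes that $1_R$ is upper semicontinuous for every closed $R\in\cE$ and cites Proposition 4.1 of \cite{BS12} to extract a countable subfamily $(R_n)_{n\in\N}$ with $1_{R^*}=\inf_{n\in\N}1_{R_n}$, i.e. $R^*=\bigcap_{n\in\N}R_n$. You instead argue directly with point-set topology: $(R^*)^c$ is a union of relatively open subsets of $\X\subseteq\R$, and since subsets of $\R$ are hereditarily Lindel\"of (being second countable), a countable subfamily of the complements already covers $(R^*)^c$, giving the same identity $R^*=\bigcap_{n\in\N}R_n$. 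The two devices are equivalent in substance---the countable-selection result in \cite{BS12} is itself a consequence of second countability---but your route is more elementary and self-contained, needing no external citation, whereas the paper's phrasing plugs into a ready-made result; your version also makes explicit the small points the paper leaves implicit (nonemptiness of the family via $\X\in\cE$, and that closedness of $R^*$ settles Borel measurability). Everything downstream, from the application of \propref{p2'} to the optimality chain $V(x,R^*)\ge f(x)\vee J(x,\overline R)=V(x,R)$, matches the paper's proof line by line.
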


\begin{proof}
By construction, $R^*$ is closed and thus belongs to $\B(\X)$. Set $\cE':= \{R\in\cE : R\ \hbox{is closed}\}$. Since the indicator function $1_R$ is upper semicontinuous on $\X$ for all $R\in\cE'$, Proposition 4.1 in \cite{BS12} asserts the existence of a countable subset $(R_n)_{n\in\mathbb{N}}$ of $\cE'$ such that
\begin{equation}\label{e8}
1_{R^*}=\inf_{R\in\cE'}1_R=\inf_{n\in\N}1_{R_n}.
\end{equation}
It follows that $R^*=\bigcap_{n\in\N}R_n$. By Proposition~\ref{p2'}, $R^*$ is an equilibrium, and it remains to show that it is optimal as well. For any $R\in\cE$, recall from Lemma~\ref{lem:bar R in E} (ii) that $\overline R\in \cE$. The definition of $R^*$ then implies $R^*\subseteq \overline R$.  It follows that 
\[
V(x,R^*)= f(x)\vee J(x,R^*) \ge f(x)\vee J(x,\overline R) =   f(x)\vee J(x, R) = V(x,R)\quad\forall\,x\in\X,
\]
where the inequality follows from Lemma~\ref{t3} and the second equality is due to Remark~\ref{rem:value unchanged}. This readily shows that $R^*$ is an optimal equilibrium.
\end{proof}

\begin{remark}
Condition \eqref{e32} is necessary for Theorem~\ref{t1}, and more generally for the existence of an optimal equilibrium. In Section~\ref{sec:examples}, we will show that if \eqref{e32} is violated, an optimal equilibrium may fail to exist; see particularly Proposition~\ref{prop:nu in between} and Remark~\ref{rem:no optimal E}. 
\end{remark}


\section{Characterization of Closed Optimal Equilibria}\label{sec:uniqueness}

Theorem~\ref{t1} shows that there exists an optimal equilibrium $R^*$, which is by construction closed. Whether it is the {\it unique} optimal equilibrium has not yet been addressed. In fact, if we do not impose any closedness condition on optimal equilibria, it is easy to see that uniqueness fails. Indeed, under Assumption \ref{a1}, any subset $R$ of $R^*$ with $\overline{R}= R^*$ is again an equilibrium (Lemma~\ref{lem:bar R in E} (ii)), with $V(x,R)= V(x,R^*)$ for all $x\in\X$ (Remark~\ref{rem:value unchanged}). Thus, $R$ is also an optimal equilibrium. 

This section investigates whether $R^*$ in \eqref{R*} is the unique {\it closed} optimal equilibrium. The eventual conclusion is that $R^*$ in general is not the unique one, but other closed optimal equilibria can only differ from $R^*$ in very limited ways. In addition, we propose a useful sufficient condition, satisfied by many practical stopping problems, for $R^*$ to be the unique closed optimal equilibrium. 


As the first step, we study the relation between an equilibrium and an optimal equilibrium. 

\begin{lemma}\label{lem:T* contains I_R*}
Let $T^*\in\cE$ be an optimal equilibrium.  For any $R\in\cE$,  we have $T^*\setminus R \subseteq I_{R}$.   
\end{lemma}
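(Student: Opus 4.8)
The plan is to fix an arbitrary state $x\in T^*\setminus R$ and prove the two inequalities $f(x)\ge J(x,R)$ and $f(x)\le J(x,R)$ separately; together they give $f(x)=J(x,R)$, i.e. $x\in I_R$, which is exactly the assertion $T^*\setminus R\subseteq I_R$. The whole argument amounts to unwinding the definition of equilibrium for the two sets $T^*$ and $R$ and combining it with the optimality of $T^*$.

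For the inequality $f(x)\ge J(x,R)$, the idea is to first evaluate $V(x,T^*)$ at the point $x\in T^*$ and then invoke optimality. Since $T^*\in\cE$, we have $x\in T^*=\Theta(T^*)=S_{T^*}\cup(I_{T^*}\cap T^*)$, so $x$ lies in $S_{T^*}$ or in $I_{T^*}$; in either case the definitions in \eqref{regions} give $f(x)\ge J(x,T^*)$, whence $V(x,T^*)=f(x)\vee J(x,T^*)=f(x)$. Because $T^*$ is an optimal equilibrium and $R\in\cE$, Definition~\ref{def:optimal equilibrium} yields
\[
f(x)=V(x,T^*)\ge V(x,R)=f(x)\vee J(x,R)\ge J(x,R).
\]

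For the reverse inequality $f(x)\le J(x,R)$, the idea is to use that $R$ is an equilibrium together with $x\notin R$. Since $R=\Theta(R)=S_R\cup(I_R\cap R)$ and $x\notin R$, in particular $x\notin S_R$, which by the definition of $S_R$ in \eqref{regions} means precisely $f(x)\le J(x,R)$. Combining the two inequalities gives $f(x)=J(x,R)$, so $x\in I_R$, and since $x\in T^*\setminus R$ was arbitrary, $T^*\setminus R\subseteq I_R$.

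The argument is short and follows directly from the definitions of equilibrium and of optimality; the only point requiring a moment of care is recognizing that the equilibrium property of $T^*$ forces $V(x,T^*)=f(x)$ on all of $T^*$ (so optimality can be applied cleanly to produce one inequality), while the equilibrium property of $R$ supplies the opposite inequality on the complement of $R$. I do not expect to need Assumption~\ref{a1} or the integrability conditions \eqref{e34} and \eqref{e32} here, as the conclusion is purely a consequence of the defining relations $\Theta(T^*)=T^*$ and $\Theta(R)=R$ and the optimality of $T^*$.
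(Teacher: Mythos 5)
Your proof is correct and follows essentially the same route as the paper's: both use the equilibrium property of $T^*$ at $x\in T^*$ to get $V(x,T^*)=f(x)$, the equilibrium property of $R$ at $x\notin R$ to get $f(x)\le J(x,R)$ (hence $V(x,R)=J(x,R)$), and the optimality of $T^*$ to force equality $f(x)=J(x,R)$, i.e.\ $x\in I_R$. Your closing remark is also right: neither Assumption~\ref{a1} nor \eqref{e34}, \eqref{e32} is needed, exactly as in the paper.
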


\begin{proof}
Fix $x\in {T^*}\setminus R$. Since ${T^*}$ is an equilibrium, $x\in {T^*}$ implies $f(x)\ge J(x,{T^*})$, and thus $V(x,{T^*})=f(x)$. Similarly, since ${R}$ is an equilibrium, $x\notin {R}$ implies $f(x) \le J(x,{R})$, and thus $V(x,R)=J(x,R)$. It follows that
\begin{equation}\label{V=V}
V(x,T^*)=f(x) \le J(x,R)=V(x,R).
\end{equation}
With $T^*$ being an optimal equilibrium, the above relation must hold with equality, which implies $x\in I_{R}$. We therefore conclude $T^*\setminus R \subseteq I_{R}$. 
\end{proof}

When $R^*$ in \eqref{R*} belongs to $\cE$, by the definition of an equilibrium (Definition~\ref{def:E}), we have
\begin{equation*}
(R^*)^c\subseteq C_{R^*} \cup I_{R^*}.
\end{equation*}
If it happens that the indifference region $I_{R^*}$ does not exist outside of $R^*$, i.e.
\begin{equation}\label{for uniqueness}
(R^*)^c=C_{R^*},
\end{equation}
then Lemma~\ref{lem:T* contains I_R*} already implies the uniqueness of closed optimal equilibria.  

\begin{theorem}\label{t2}
If $R^*$ in \eqref{R*} is an optimal equilibrium satisfying \eqref{for uniqueness}, then $R^*$ is the unique closed optimal equilibrium. 
\end{theorem}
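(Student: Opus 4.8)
The plan is to take an arbitrary closed optimal equilibrium $T^*$ and show it must coincide with $R^*$, establishing uniqueness through a double inclusion. The easy direction $R^*\subseteq T^*$ is immediate from the construction \eqref{R*}: since $R^*$ is the intersection of \emph{all} closed equilibria and $T^*$ is one such closed equilibrium, $R^*$ is automatically contained in $T^*$. This is exactly where the closedness hypothesis on $T^*$ enters, ensuring that $T^*$ is among the sets being intersected in \eqref{R*}.

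For the reverse inclusion $T^*\subseteq R^*$, the idea is to invoke \lemref{lem:T* contains I_R*}, applied with the optimal equilibrium there taken to be $T^*$ and the arbitrary equilibrium taken to be $R=R^*$ (which is an equilibrium by hypothesis). This yields $T^*\setminus R^*\subseteq I_{R^*}$. I would then bring in the standing hypothesis \eqref{for uniqueness}, namely $(R^*)^c=C_{R^*}$. Because the three regions $S_{R^*}$, $I_{R^*}$, $C_{R^*}$ in \eqref{regions} are pairwise disjoint (cut out respectively by $f>J$, $f=J$, and $f<J$), the identity $(R^*)^c=C_{R^*}$ forces $I_{R^*}\cap(R^*)^c=\emptyset$; in words, there are no indifference points of $R^*$ lying outside $R^*$. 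Combining this with the trivial containment $T^*\setminus R^*\subseteq(R^*)^c$, I conclude $T^*\setminus R^*\subseteq I_{R^*}\cap(R^*)^c=\emptyset$, whence $T^*\subseteq R^*$. The two inclusions together give $T^*=R^*$, proving uniqueness.

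I do not anticipate a serious obstacle, as the argument is essentially a direct combination of \lemref{lem:T* contains I_R*} with the disjointness of the regions in \eqref{regions}. The only point demanding care is locating precisely where the hypothesis \eqref{for uniqueness} is indispensable: it is exactly what excludes the otherwise-possible scenario in which a larger closed optimal equilibrium $T^*$ strictly contains $R^*$ by absorbing indifference points $I_{R^*}$ sitting outside $R^*$. That scenario is the genuine source of non-uniqueness (as the explicit counterexample with $R^*\subsetneq T^*$ discussed earlier demonstrates), so the proof must make clear that ruling out $I_{R^*}$ on $(R^*)^c$ is both the hypothesis we assume and the exact mechanism collapsing $T^*$ onto $R^*$.
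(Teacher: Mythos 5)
Your proof is correct and takes essentially the same approach as the paper: both directions are obtained exactly as you do, with $R^*\subseteq T^*$ coming from the definition \eqref{R*} and $T^*\setminus R^*=\emptyset$ coming from Lemma~\ref{lem:T* contains I_R*} (applied with $R=R^*$) combined with \eqref{for uniqueness}. The only cosmetic difference is that the paper states the consequence of \eqref{for uniqueness} as $I_{R^*}\subseteq R^*$, whereas you state it as $I_{R^*}\cap(R^*)^c=\emptyset$; these are equivalent since the regions in \eqref{regions} partition $\X$.
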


\begin{proof}
By the closedness of $R^*$, \eqref{for uniqueness} implies $I_{R^*}\subseteq \overline{R^*} = R^*$. 
Let $T^*\in\cE$ be a closed optimal equilibrium. In view of \eqref{R*}, $R^*\subseteq T^*$. On the other hand, Lemma~\ref{lem:T* contains I_R*} implies $T^*\setminus R^* \subseteq I_{R^*}\subseteq R^*$, which entails $T^*\setminus R^*=\emptyset$. We therefore conclude that $T^*=R^*$.
\end{proof}

Theorems~\ref{t1} and \ref{t2} together yield the useful result.

\begin{corollary}
Suppose Assumption \ref{a1}, \eqref{e34}, and \eqref{e32} hold. If $R^*$ in \eqref{R*} satisfies \eqref{for uniqueness}, then $R^*$ is the unique closed optimal equilibrium.
\end{corollary}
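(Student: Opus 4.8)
The plan is to derive this corollary as a direct composition of Theorem~\ref{t1} and Theorem~\ref{t2}, verifying only that the hypotheses needed for each result line up. The statement of Theorem~\ref{t2} is conditional: it presumes that $R^*$ defined in \eqref{R*} is \emph{already known} to be an optimal equilibrium, and then shows that adjoining condition \eqref{for uniqueness} forces uniqueness among closed optimal equilibria. The only gap to fill is therefore to supply that presumption, which is precisely what Theorem~\ref{t1} delivers under the present hypotheses.

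First I would invoke Theorem~\ref{t1}: since Assumption~\ref{a1}, \eqref{e34}, and \eqref{e32} are all assumed to hold, Theorem~\ref{t1} guarantees that $R^*$ is an optimal equilibrium (and, by construction, a closed one). Second, with $R^*$ now established as an optimal equilibrium and assumed to satisfy \eqref{for uniqueness}, the hypotheses of Theorem~\ref{t2} are met verbatim; applying Theorem~\ref{t2} then yields that $R^*$ is the unique closed optimal equilibrium. This completes the argument.

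I do not anticipate any genuine obstacle here. All the substantive work has already been carried out upstream: the existence of the optimal equilibrium via countable intersections of closed equilibria (Propositions~\ref{p2} and \ref{p2'}, feeding into Theorem~\ref{t1}), and the rigidity argument resting on Lemma~\ref{lem:T* contains I_R*}, which converts the no-indifference-outside-$R^*$ condition \eqref{for uniqueness} into the equality $T^*=R^*$ for any competing closed optimal equilibrium $T^*$ (Theorem~\ref{t2}). The corollary merely records the convenient special case in which both sets of hypotheses are simultaneously available, so the proof reduces to citing Theorem~\ref{t1} and then Theorem~\ref{t2} in sequence, with no additional estimation or construction required.
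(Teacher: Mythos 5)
Your proposal is correct and matches the paper exactly: the paper presents this corollary as an immediate consequence of Theorems~\ref{t1} and \ref{t2}, with Theorem~\ref{t1} supplying that $R^*$ is an optimal equilibrium under Assumption~\ref{a1}, \eqref{e34}, and \eqref{e32}, and Theorem~\ref{t2} then upgrading this to uniqueness among closed optimal equilibria via \eqref{for uniqueness}. No further argument is needed.
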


\begin{remark}
The condition \eqref{for uniqueness} for uniqueness is satisfied by many practical stopping problems, including those studied in Section~\ref{subsec:Bessel} and Section~\ref{subsec:put on GBM}.  
\end{remark}

In general, $R^*$ in \eqref{R*} may not always satisfy \eqref{for uniqueness}, and there can be multiple closed optimal equilibria. This is demonstrated in the next example.

\begin{example}\label{eg:counterexample}
Let $X$ be a one-dimensional Bessel process, i.e. $X_t := |W_t|$, where $W$ is a one-dimensional Brownian motion. Consider the hyperbolic discount function in \eqref{hyperbolic}. Let $a^*>0$ be specified as in Lemma~\ref{lem:previous result} below, and pick $b^*>a^*$. Define the payoff function $f:[0,\infty)\to\R_+$ by  
\begin{equation*}
f(x) = 
\begin{cases}
x\quad &\hbox{if}\ 0\le x\le b^*,\\
\E^x\left[ \delta(T^x_{b^*}) b^* \right]=\E^x\left[\frac{b^*}{1+\beta T^x_{b^*}}\right]\quad &\hbox{if}\ x> b^*,
\end{cases}
\end{equation*}  
where $T^x_{b^*}$ is defined as in \eqref{T^x_y}. Note specifically that $f$ is continuous, with $f(b^*)=b^*$. Under current setting, one may conclude from Theorem~\ref{t1} that $R^*$ in \eqref{R*} is an optimal equilibrium. 

Observe that for any $x\in[0,a^*)$, $J(x,[a^*,b^*]) = J(x,[a^*,\infty)) > x = f(x)$, where the inequality follows from Lemma~\ref{lem:previous result}; for any $x> b^*$, $J(x,[a^*,b^*]) = \E^x\left[\frac{b^*}{1+\beta T^x_{b^*}}\right]=f(x)$. This implies $[0,a^*)\cup (b^*,\infty)\in I_{[a^*,b^*]}\cup C_{[a^*,b^*]}$, and thus $[a^*,b^*]\in \cE$.  

We claim that $R^* = [a^*,b^*]$. By \eqref{R*}, $R^*\subseteq [a^*,b^*]$. If $R^*$ is not a connected set, one may argue as in \cite[Lemma 4.3]{HN17} to reach a contradiction. Thus, $R^*= [a,b]$ for some $a^*\le a\le b\le b^*$. If $a^*<a$, then for any $x\in[0,a)$, $J(x,[a,\infty))= J(x,R^*) \ge x$, where the inequality follows from $R^*\in\cE$. This implies $[a,\infty)\in\cE$, which contradicts Lemma~\ref{lem:previous result} as $a> a^*$. If $b< b^*$, then
\[
x> \E^x\left[\frac{b}{1+\beta T^x_{b}}\right] = J(x,R^*),\quad \forall x\in (b,b^*). 
\]  
This contradicts $R^*\in \cE$. We therefore conclude that $R^*=[a^*,b^*]$. 

Now, note that $(b^*,\infty)\subset I_{R^*}$, and thus \eqref{for uniqueness} fails to hold. Moreover, $T^*:=[a^*,\infty)$ is also a closed optimal equilibrium, as 
\[
V(x,T^*) = f(x) = \E^x\bigg[\frac{b^*}{1+\beta T^x_{b^*}}\bigg] = \E^x\bigg[\frac{f(b^*)}{1+\beta T^x_{b^*}}\bigg] = J(x,R^*) = V(x,R^*),\quad \forall x> b^*. 
\]
\end{example}

In view of Example~\ref{eg:counterexample}, it is of interest to investigate, when \eqref{for uniqueness} is not guaranteed, how an arbitrary closed optimal equilibrium $T^*$ can differ from $R^*$ in \eqref{R*}. 

To this end, take $R\in\cE$ that is closed. For any $x\notin R$, define 
\begin{equation}\label{ell}
\ell_R(x) := \sup\{y\in R: y<x\}\quad \hbox{and}\quad r_R(x):= \inf\{y\in R : y>x\}.
\end{equation}
If there is no $y\in R$ with $y<x$, we take $\ell_R(x) = \inf\X$ (which can be $-\infty$). Similarly, if there is no $y\in R$ with $y>x$, we take $r_R(x) = \sup \X$ (which can be $\infty$).
Note that  $\ell_R(x)<x$ and $r_R(x)>x$, thanks to the closedness of $R$. Let $T^*\in\cE$ be a closed optimal equilibrium that contains $R$. Then $T^*\setminus R$ must intersect $(\ell_R(x), r_R(x))$ for some $x\notin R$. To state a precise relation between $T^*$ and $R$, we will require the state process $X$ to be {\it regular} in the following sense: 
\begin{equation}\label{regular}
\hbox{for any $x\in \text{int}(\X)$,}\quad \P^x(T^x_y <\infty) >0\quad \forall y\in \X,
\end{equation}
where 
\begin{equation}\label{T^x_y}
T^x_y := \inf\{t\ge 0 : X^x_t =y\}
\end{equation}
is the first hitting time of $X$ to $y$ from $x$, for any distinct $x,y\in\X$. 
This is in line with the formulation in Karlin and Taylor \cite[Chapter 15]{KT-book-81}.

\begin{proposition}\label{prop:T* and R}
Suppose that 
\eqref{e34} holds, $X$ is regular in the sense of \eqref{regular}, and \eqref{e1} holds with strict inequality. Let $T^*\in\cE$ be a closed optimal equilibrium, and $R\in\cE$ be closed and contained in $T^*$. Then, for any $x\notin R$, 
\begin{itemize}
\item [(i)] if $(\ell_R(x), r_R(x))\cap C_{R}\neq\emptyset$, then $T^*\cap (\ell_R(x), r_R(x)) =\emptyset$;
\item [(ii)] if $(\ell_R(x), r_R(x))\cap C_{R}=\emptyset$ and $f\not\equiv 0$ on $(\ell_R(x),r_R(x))$, then 
\[
T^*\cap (\ell_R(x), r_R(x)) =\emptyset\quad \hbox{or}\quad (\ell_R(x), r_R(x))\subseteq T^*;
\]
\item [(iii)] if $(\ell_R(x), r_R(x))\cap C_{R}=\emptyset$ and $f\equiv 0$ on $(\ell_R(x),r_R(x))$, then for any Borel subset $A$ of $(\ell_R(x), r_R(x))$, $R\cup A\in\cE$ and $V(y,R\cup A) = V(y,T^*)=0$ for all $y\in (\ell_R(x), r_R(x))$.
\end{itemize}  
\end{proposition}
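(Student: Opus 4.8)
The plan is to work entirely on the fixed gap $G := (\ell_R(x), r_R(x))$. Since $R$ is closed, its endpoints $\ell_R(x), r_R(x)$ (when finite and in $\X$) lie in $R \subseteq T^*$, and because $X$ is continuous with $G \cap R = \emptyset$, a path started in $G$ can reach $R$ only by first exiting $G$ through one of these endpoints; moreover every $y \in G$ lies in $I_R \cup C_R$, i.e. $f(y) \le J(y,R)$. The identity that drives all three parts is that $J(y,R) = J(y,T^*)$ for every $y \in G \setminus T^*$: indeed $y\notin R$ and $y\notin T^*$ give $V(y,R)=J(y,R)$ and $V(y,T^*)=J(y,T^*)$, optimality of $T^*$ gives $V(y,T^*)\ge V(y,R)$, while $R\subseteq T^*$ and \lemref{t3} give $J(y,R)\ge J(y,T^*)$, so equality holds.

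The second step is a sharpened form of \lemref{t3}. Writing $\sigma := \rho(y,T^*)\le v := \rho(y,R)$ for $y\notin T^*$, I would repeat the strong-Markov computation in the proof of \lemref{t3}, now using that on $\{\sigma<v\}$ we have $X_\sigma\in T^*\setminus R\subseteq I_R$ (\lemref{lem:T* contains I_R*}), so that $\E^x[\delta(v-\sigma)f(X_v)\mid\F_\sigma]=J(X_\sigma,R)=f(X_\sigma)$ there. This upgrades the inequality to the exact identity
\[
J(y,R)-J(y,T^*)=\E^y\big[(\delta(v)-\delta(\sigma)\delta(v-\sigma))\,f(X_v)\,1_{\{\sigma<v\}}\big],
\]
with \eqref{e34} handling the convention on $\{v=\infty\}$. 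Since $y\notin T^*=\overline{T^*}$ forces $\sigma>0$ $\P^y$-a.s., the strict version of \eqref{e1} makes the integrand strictly positive exactly on $\{0<\sigma<v\}\cap\{f(X_v)>0\}$. Hence, for $y\in G\setminus T^*$ (where the left side vanishes by the first step), I obtain the key property
\[
(\star)\qquad \P^y\big(\sigma<v,\ f(X_v)>0\big)=0.
\]

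The crux is the dichotomy lemma: if $T^*\cap G$ is nonempty and proper (neither $\emptyset$ nor $G$), then $f\equiv 0$ on $G$. I would pick a boundary point $w\in T^*\cap G$ that is an endpoint of some component $(c,d)\subseteq G\setminus T^*$; regularity \eqref{regular} gives $\P^y(X_\sigma=w)>0$ for $y\in(c,d)$ (the process exits $(c,d)$ at the near endpoint $w\in T^*$). On $\{X_\sigma=w\}$ we have $\sigma<v$, so $(\star)$ with the strong Markov property forces $f(X_{\rho(w,R)})=0$ $\P^w$-a.s., i.e. $J(w,R)=0$; since $w\in\text{int}(\X)$ and the $R$-exit from $G$ lands in $\{\ell_R(x),r_R(x)\}$, regularity makes both endpoints reachable from $w$, so $f$ vanishes at both, giving $J(\cdot,R)\equiv 0$ on $G$ and then $f\le J(\cdot,R)=0$, i.e. $f\equiv 0$ on $G$. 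Parts (i) and (ii) are corollaries: in (i) a continuation point forces $J(\cdot,R)\not\equiv 0$, hence $f\not\equiv 0$, so $T^*\cap G$ is not proper nonempty, and it cannot be all of $G$ either (that would give $G\subseteq T^*\setminus R\subseteq I_R$, killing the continuation point), leaving $T^*\cap G=\emptyset$; in (ii) the hypothesis $f\not\equiv 0$ rules out the proper-nonempty case, yielding the stated alternative.

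Part (iii) I would treat directly, without $(\star)$: here $G\subseteq I_R$ with $f\equiv 0$, so $f\equiv 0$ on $\overline G$ by continuity and $J(\cdot,R)\equiv 0$ on $G$. For Borel $A\subseteq G$, the gate structure gives $\rho(y,R\cup A)=\rho(y,R)$ for $y\notin[\ell_R(x),r_R(x)]$ and $f(X_{\rho(y,R\cup A)})=0$ for $y\in\overline G$ (the exit point lies in $\overline G$ or in $R$ reached at an endpoint, where $f=0$); verifying $\Theta(R\cup A)=R\cup A$ separately on $R$, on $A$, on $G\setminus A$, and outside $[\ell_R(x),r_R(x)]$ (the last via \lemref{t3}) gives $R\cup A\in\cE$, and $V(\cdot,R\cup A)=V(\cdot,T^*)=0$ on $G$ follows from $f\equiv 0$ and $J(\cdot,T^*)\le J(\cdot,R)=0$. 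The main obstacle I anticipate is precisely the passage from the equality of the two $J$'s to the pointwise a.s. statement $(\star)$, where strict \eqref{e1} is indispensable, together with the one-dimensional hitting estimates from \eqref{regular} (reaching a prescribed gap endpoint before leaving, and reaching both endpoints from an interior point); a closely related nuisance is the bookkeeping for boundary gaps with $\ell_R(x)=\inf\X$ or $r_R(x)=\sup\X$ infinite or inaccessible, where ``$f$ vanishes at both endpoints'' must be replaced by the fact that non-hit endpoints contribute the limsup term, which is $0$ by \eqref{e34}, so that $J(\cdot,R)\equiv 0$ on $G$ still follows.
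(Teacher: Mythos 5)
Your proposal is correct, and while it relies on the same core ingredients as the paper's proof --- Lemma~\ref{lem:T* contains I_R*}, the strong-Markov computation from Lemma~\ref{t3}, strict \eqref{e1}, regularity-based hitting estimates, and \eqref{e34} for degenerate gaps --- it organizes them along a genuinely different route. The paper proves (i) and (ii) by two separate contradiction arguments: it fixes a point $z$ in the gap, enumerates the endpoint configurations (cases (a)--(d), excluded via \eqref{e34}; cases (e)--(g), treated via regularity), and reruns the proof of Lemma~\ref{t3} with one strict inequality to obtain $J(z,R)>J(z,T^*)$, contradicting optimality of $T^*$. You instead upgrade Lemma~\ref{t3} to an exact identity (replacing $f(X_\sigma)$ by $J(X_\sigma,R)$ on $\{\sigma<v\}$ via Lemma~\ref{lem:T* contains I_R*}), so that the equality $J(y,R)=J(y,T^*)$ on $G\setminus T^*$ forced by optimality converts into the null-event statement $(\star)$; your dichotomy lemma then absorbs the paper's endpoint case analysis into the single conclusion that $f$, and hence $J(\cdot,R)$, vanish identically on the gap, and (i)--(ii) drop out as corollaries. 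Your organization is more modular and yields a quantitative formula for the value gap; the paper's is more local, needing only a strict inequality at one point. For (iii) the two treatments essentially coincide, and your derivation of $V(\cdot,T^*)=0$ on $G$ from $J(\cdot,T^*)\le J(\cdot,R)=0$ (Lemma~\ref{t3}) is, if anything, cleaner than the paper's identification $V(\cdot,T^*)=V(\cdot,R\cup A^*)$ with $A^*:=T^*\cap G$.

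Two points to tighten in a full write-up. First, the hitting estimates you defer to \eqref{regular} --- exiting a component $(c,d)$ of $G\setminus T^*$ at a prescribed endpoint, and reaching each gap endpoint from $w$, with positive probability --- do not follow from \eqref{regular} alone; as in the paper's proof, one needs the strong-Markov iteration argument: if the probability of exiting at the near endpoint were zero, the process could never reach the far side at all, contradicting \eqref{regular}. Second, in (iii) the parenthetical ``(the last via Lemma~\ref{t3})'' is misattributed: for $y\notin R$ lying outside $[\ell_R(x),r_R(x)]$ you need the gate equality $\rho(y,R\cup A)=\rho(y,R)$, since Lemma~\ref{t3} gives $J(y,R\cup A)\le J(y,R)$, which is the wrong direction there; Lemma~\ref{t3} is instead exactly what handles $y\in R$, in particular the endpoints $\ell_R(x)$ and $r_R(x)$, where the gate equality can fail because the process may enter the gap and hit $A$ before returning to $R$. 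Both tools appear in your argument, so these are presentation slips rather than gaps.
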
 

\begin{proof}
(i) 
By contradiction, suppose $T^*\cap (\ell_R(x), r_R(x)) \neq \emptyset$. Take $z\in (\ell_R(x), r_R(x))\cap C_{R}$. By Lemma~\ref{lem:T* contains I_R*}, $z\notin T^*$, and we can thus define
\begin{equation}\label{ell'}
\ell(z) := \sup\{y\in T^*: y<z\} \quad \hbox{and}\quad r(z):= \inf\{y\in T^* : y>z\}.
\end{equation}
We must have either $\ell(z) > \ell_R(x)$ or $r(z) < r_R(x)$, otherwise $T^*\cap (\ell_R(x), r_R(x))\neq\emptyset$ would be violated.
Also, by the closedness of $T^*$, we have $\ell(z)<z$ and $r(z)>z$. 
Note that $(\ell_R(x), r_R(x))\cap C_{R}\neq\emptyset$ already excludes the following cases: 
\begin{itemize}
\item [(a)] $\ell_R(x)=\inf \X\notin \X$, and $r_R(x)=\sup\X\notin \X$; 
\item [(b)] $\ell_R(x)=\inf \X\in \X$ (or $\ell_R(x)>\inf\X$), $r_R(x)=\sup\X\notin\X$, and $f(\ell_R(x))=0$;
\item [(c)] $\ell_R(x)=\inf\X\notin\X$, $r_R(x)=\sup\X\in\X$ (or $r_R(x)<\sup\X$), and $f(r_R(x))=0$;
\item [(d)] $\ell_R(x)=\inf \X\in \X$ (or $\ell_R(x)>\inf\X$), $r_R(x)=\sup\X\in\X$ (or $r_R(x)<\sup\X$), and $f(\ell_R(x))=f(r_R(x))=0$. 
\end{itemize}
Indeed, in each of these cases, $J(y,R) = 0 \le f(y)$ for all $y\in (\ell_R(x),r_R(x))$, thanks to \eqref{e34}. This implies $(\ell_R(x), r_R(x))\cap C_{R}=\emptyset$, a contradiction. 

Let us focus on the remaining cases: 
\begin{itemize}
\item [(e)] $\ell_R(x)=\inf \X\in \X$ (or $\ell_R(x)>\inf\X$), $r_R(x)=\sup\X\notin \X$, and $f(\ell_R(x))>0$;
\item [(f)] $\ell_R(x)=\inf \X\notin \X$, $r_R(x)=\sup\X\in\X$ (or $r_R(x)<\sup\X$), and $f(r_R(x))>0$; 
\item [(g)] $\ell_R(x)=\inf \X\in \X$ (or $\ell_R(x)>\inf\X$), $r_R(x)=\sup\X\in\X$ (or $r_R(x)<\sup\X$), and either $f(\ell_R(x))>0$ or $f(r_R(x))>0$ (Assume without loss of generality that $f(\ell_R(x))>0$).
\end{itemize}
Define $\tau := \rho(z,T^*)$ and $v := \rho(z,R)$. By definition, $\tau\le v$. We consider the set 
$$A:= \{\omega\in\Omega:\tau < v\},$$ 
and claim that $\P^z(A) >0$. If $\P^z(A) = 0$, we have either ``$\ell(z)=\ell_R(x)$ and $r(z)< r_R(x)$'' or ``$\ell(z)>\ell_R(x)$ and $r(z)= r_R(x)$''. Assume the former, without loss of generality. Note that $\P^z(A) = 0$ entails $T^z_{\ell_R(x)}<T^z_{r(z)}$ $\P^z$-a.s. This, together with $X$ being a strong Markov process, implies that $X$, when starting from $z$, can never reach the region $[r(z),\infty)$, $\P^z$-a.s. This, however, contradicts $X$ being regular in the sense of \eqref{regular}. With $\P^z(A)>0$, $X$ being a regular process then implies $\P^z\big(X^z_{v}= \ell_R(x)\ |\ A\big) >0$ in cases (e) and (g), and $\P^z\big(X^z_{v}= r_R(x)\ |\ A\big) >0 $ in case (f). We therefore conclude that in all these three cases,
\begin{equation}\label{P>0}
\P^z\left(f\left(X_{v}\right) >0\ \middle|\ A\right) >0. 
\end{equation}
Now, we carry out the same calculation in \eqref{ee1}, with $x$ and $T$ replaced by $z$ and $T^*$. In this calculation, note that we now have a {\it strict} inequality
\[
\E^z\left[\E^z\left[\delta(v)f(X_v)\mid\F_\tau\right] 1_A \right] > \E^z \left[\delta(\tau)\E^z\left[\delta(v-\tau)f(X_v) \mid\F_\tau\right] 1_A \right].
\]
We get ``$>$'', instead of merely ``$\ge$'', because of $\P(A)>0$,  \eqref{P>0}, $0<\tau< v$ on $A$, and \eqref{e1} with strict inequality. 
Then, the same argument in Lemma~\ref{t3}, below \eqref{ee1}, can be applied here, which yields the strict inequality $J(z,R)>J(z,T^*)$. Since $z\notin T^*$ and $T^*$ is an equilibrium, $J(z,T^*)\ge f(z)$. Thus, we get $V(z,R)\ge J(z,R) > J(z,T^*) = V(z,T^*)$, which contradicts the fact that $T^*$ is an optimal equilibrium. 

(ii) Since $R$ is an equilibrium, $(\ell_R(x), r_R(x))\cap C_{R}=\emptyset$ implies $(\ell_R(x), r_R(x))\subseteq I_{R}$.  By contradiction, suppose $T^*\cap (\ell_R(x), r_R(x)) \neq \emptyset$, but $(\ell_R(x), r_R(x))$ is not contained in $T^*$. Take $z\in (\ell_R(x), r_R(x))\setminus T^*$, and define $\ell(z)$ and $r(z)$ as in \eqref{ell'}. Note that cases (a)-(d) specified above are again excluded under current setting. Indeed, in these cases  $J(y,R) = 0$ for all $y\in (\ell_R(x),r_R(x))$. With $(\ell_R(x), r_R(x))\subseteq I_{R}$, we conclude $f(y) = J(y,R) =0$ for all $y\in (\ell_R(x),r_R(x))$, which contradicts the choice of $f$. For the remaining cases (e)-(g) specified above, the same argument as in (i) shows that $V(z,R) > V(z,T^*)$, which contradicts the fact that $T^*$ is an optimal equilibrium. 

(iii) For any Borel subset $A$ of $(\ell_R(x),r_R(x))$, thanks to $f\equiv 0$ on $(\ell_R(x),r_R(x))$ and \eqref{e34}, $J(y,R\cup A) = 0= f(y)$ for all $y\in (\ell_R(x),r_R(x))$. That is, $(\ell_R(x),r_R(x))\subseteq I_{R\cup A}$, which already implies $R\cup A\in\cE$. Recall from (ii) that $R$ being an equilibrium and $(\ell_R(x), r_R(x))\cap C_{R}=\emptyset$ imply $(\ell_R(x), r_R(x))\subseteq I_{R}$. Thus, we have $V(y,R\cup A) = f(y) =0$ for all $y\in (\ell_R(x), r_R(x))$. In particular, if we take $A^* := T^* \cap (\ell_R(x), r_R(x))$, then for all $y\in (\ell_R(x), r_R(x))$, $V(y,T^*) = V(y,R\cup A^*)=0$.
\end{proof}

\begin{remark}
The condition ``\eqref{e1} holds with strict inequality'' for Proposition~\ref{prop:T* and R} is not restrictive in practice. Most commonly-seen non-exponential discount functions, including those specified under \eqref{e1}, satisfy this condition.
\end{remark}

Recall $R^*$ in \eqref{R*}. For each $x\notin R^*$, we define $\ell_*(x)$ and $r_*(x)$ as in \eqref{ell}, with $R$ replaced by $R^*$. Any closed optimal equilibrium $T^*$ can differ from $R^*$ in very limit ways, as stated below. 

\begin{corollary}\label{coro:T* and R*}
Suppose that \eqref{e34} holds, $X$ is regular in the sense of \eqref{regular}, and \eqref{e1} holds with strict inequality. Let $T^*\in\cE$ be a closed optimal equilibrium. If $R^*$ in \eqref{R*} is an optimal equilibrium, then for any $x\notin R^*$, 
\begin{itemize}
\item [(i)] if $(\ell_*(x), r_*(x))\cap C_{R^*}\neq\emptyset$, then $T^*\cap (\ell_*(x), r_*(x)) =\emptyset$;
\item [(ii)] if $(\ell_*(x), r_*(x))\cap C_{R^*}=\emptyset$ and $f\not\equiv 0$ on $(\ell_*(x),r_*(x))$, then 
\[
T^*\cap (\ell_*(x), r_*(x)) =\emptyset\quad \hbox{or}\quad (\ell_*(x), r_*(x))\subseteq T^*;
\]
\item [(iii)] if $(\ell_*(x), r_*(x))\cap C_{R^*}=\emptyset$ and $f\equiv 0$ on $(\ell_*(x),r_*(x))$, then for any Borel subset $A$ of $(\ell_*(x), r_*(x))$, $R^*\cup A$ is an optimal equilibrium. 
\end{itemize}  
\end{corollary}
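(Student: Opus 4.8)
The strategy is to reduce everything to \propref{prop:T* and R} applied with the choice $R:=R^*$. For this to be legitimate I must check its hypotheses: $R^*$ is closed by its very definition \eqref{R*} (an intersection of closed sets), it belongs to $\cE$ by the standing assumption that $R^*$ is an optimal equilibrium, and $R^*\subseteq T^*$ because $T^*$ is itself a closed equilibrium and hence one of the sets over which the intersection in \eqref{R*} is taken. Since $\ell_*$ and $r_*$ are defined exactly as $\ell_{R^*}$ and $r_{R^*}$, parts (i) and (ii) of the corollary are then nothing but parts (i) and (ii) of \propref{prop:T* and R} read off verbatim, and require no further work.

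For part (iii), \propref{prop:T* and R}(iii) already delivers the two facts that $R^*\cup A\in\cE$ and that $V(y,R^*\cup A)=0$ for every $y\in(\ell_*(x),r_*(x))$. What remains is to promote ``equilibrium'' to ``optimal equilibrium''. Because $R^*$ is assumed optimal, it dominates every equilibrium, so it suffices to prove the single identity $V(y,R^*\cup A)=V(y,R^*)$ for all $y\in\X$; then any $Q\in\cE$ satisfies $V(\cdot,Q)\le V(\cdot,R^*)=V(\cdot,R^*\cup A)$, which is exactly optimality of $R^*\cup A$. One inequality is free: since $R^*\subseteq R^*\cup A$ and both are equilibria, \corref{coro:smaller better} gives $V(\cdot,R^*\cup A)\le V(\cdot,R^*)$.

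The reverse inequality, hence the identity, is where the real argument lies, and I would split it according to the position of $y$. On the gap $(\ell_*(x),r_*(x))$ the two values coincide trivially: \propref{prop:T* and R}(iii) gives $V(y,R^*\cup A)=0$, while $(\ell_*(x),r_*(x))\subseteq I_{R^*}$ together with $f\equiv 0$ there gives $V(y,R^*)=f(y)=0$. For $y\notin(\ell_*(x),r_*(x))$ I would show $\rho(y,R^*\cup A)=\rho(y,R^*)$ $\P^y$-a.s., which immediately yields $J(y,R^*\cup A)=J(y,R^*)$ and therefore equality of the values. The ``$\le$'' relation between these hitting times is automatic as $R^*\subseteq R^*\cup A$; for ``$\ge$'' I would argue pathwise, much as in the proof of \lemref{lem:rho R = rho bar R}: the only way $X$ could meet $R^*\cup A$ strictly before $R^*$ is by first entering the open gap through a point of $A$, but a continuous path starting outside $(\ell_*(x),r_*(x))$ can reach its interior only after crossing one of the endpoints $\ell_*(x)$ or $r_*(x)$, and each such endpoint, being a limit of points of the closed set $R^*$, belongs to $R^*$; thus $X$ has already met $R^*$ by that crossing time.

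The main obstacle is precisely this shielding step, and in particular the degenerate boundary configurations in which a relevant endpoint equals $\inf\X$ or $\sup\X$ and may fail to lie in $R^*$. I expect to dispose of these by the following observation: if $(\ell_*(x),r_*(x))\subseteq I_{R^*}$ with $f\equiv 0$ on the gap, then regularity \eqref{regular} forces $f$ to vanish at every endpoint the process can actually reach, for otherwise $J(y,R^*)$ would be strictly positive for $y$ in the gap, contradicting $y\in I_{R^*}$ and $f(y)=0$. Consequently, in each such boundary case $f$ vanishes in a full one-sided neighborhood inside $\X$ and, using \eqref{e34} on the event of never stopping, both $V(\cdot,R^*)$ and $V(\cdot,R^*\cup A)$ are identically $0$ there, so the identity holds trivially. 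Assembling the gap, off-gap, and boundary cases yields $V(\cdot,R^*\cup A)=V(\cdot,R^*)$ on all of $\X$, and optimality of $R^*\cup A$ follows.
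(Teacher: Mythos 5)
Your proposal is correct and takes essentially the same route as the paper: the paper's own proof is precisely the reduction you describe---$R^*$ is closed, lies in $\cE$ by hypothesis, and is contained in $T^*$ since $T^*$ is one of the sets intersected in \eqref{R*}, so Proposition~\ref{prop:T* and R} applies with $R=R^*$, and in part (iii) optimality of $R^*\cup A$ is inherited from optimality of $R^*$ via the value identity $V(\cdot,R^*\cup A)=V(\cdot,R^*)$ that you spell out (the paper compresses this into ``follows directly''). Two minor remarks: your claim that $\rho(y,R^*\cup A)=\rho(y,R^*)$ $\P^y$-a.s.\ for \emph{every} $y\notin(\ell_*(x),r_*(x))$ is slightly too strong---without Assumption~\ref{a1} it may fail when $y\in R^*$ is itself a gap endpoint, but there both values trivially equal $f(y)$ since both sets are equilibria containing $y$, so the identity survives; and in the boundary cases you do not need regularity to make $f$ vanish at endpoints lying in $\X$, since the standing continuity of $f$ together with $f\equiv 0$ on the open gap already gives this.
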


\begin{proof}
By definition, $R^*$ is closed and $R^*\subseteq T^*$. The result then follows directly from Proposition~\ref{prop:T* and R} and $R^*$ being an optimal equilibrium. 
\end{proof}

\begin{remark}
One may wonder if Corollary~\ref{coro:T* and R*} (ii) can be strengthened so that the possibility $(\ell_*(x), r_*(x))\subseteq T^*$ can be excluded. If this could be done, Corollary~\ref{coro:T* and R*} (i) and (ii) together would imply that $R^*$ is the unique closed equilibrium, whenever $f\not\equiv 0$ on any sub-interval of $\X$. 

This, however, cannot be done in general. Indeed, in Example~\ref{eg:counterexample}, $T^*\setminus R^* = (b^*,\infty) \subseteq I_{R^*}$, which shows that excluding ``$(\ell_*(x), r_*(x))\subseteq T^*$'' in Corollary~\ref{coro:T* and R*} (ii) is not possible in general.
\end{remark}







\subsection{Comparison with the Discrete-Time Setting in \cite{HZ17-discrete}}\label{subsec:comparison}
In a discrete-time setting, Huang and Zhou \cite{HZ17-discrete} also establish the existence of an optimal equilibrium, for an infinite-horizon stopping problem under non-exponential discounting. There is, nonetheless, a key difference between the present paper and \cite{HZ17-discrete}. When defining the fixed-point operator $\Theta$ in \eqref{Theta}, we consider the indifference region, consistent with the iterative approach originally proposed in \cite{HN17}. By contrast, the indifference region does not play a role in \cite{HZ17-discrete}: it is simply taken as a part of the stopping region. This simplification allows \cite{HZ17-discrete} to establish strong, desirable results in discrete time, including (i) every equilibrium is closed, and (ii) there exists a {\it unique} optimal equilibrium, under mild continuity assumptions.

While it is tempting to make the same simplification in continuous time, such a simplification is legitimate {\it only} in discrete time. Specifically, if we ignore the indifference region in \eqref{regions} by including it in the stopping region, it will be questionable whether fixed-point iterations can converge to equilibria (i.e. Proposition~\ref{prop:iteration} may no longer hold). This goes back to the proof of the convergence result \cite[Theorem 3.16]{HN17}, which requires the use of the indifference region. In the discrete-time setting of \cite{HZ17-discrete}, the absence of the indifference region is salvaged by \cite[Lemma 3.1]{HZ17-discrete}, which allows a fixed-point iteration to converge to an equilibrium (\cite[Theorem 3.1]{HZ17-discrete}), without the need of the indifference region. Such a result, however, requires $\Theta(R)\subseteq R$ for $R\in \B(\X)$. In continuous time, this relation does not hold (see Remark~\ref{rem:R subset Theta R}), unless $R$ is already an equilibrium.


\section{Examples}\label{sec:examples}

To illustrate our theoretic results, we study three distinct stopping problems in this section. The first one is the stopping of a Bessel process, investigated in detail in \cite[Section 4]{HN17}. We will briefly present it, as it perfectly confirms Theorem~\ref{t1}. The second example is the stopping of a geometric Brownian motion, while the third is an American put option written on a geometric Brownian motion. These two examples demand a careful comprehensive analysis, which will enhance our understanding, particularly, of the role of the integrability condition \eqref{e32}.
In each of these three examples, we will characterize an optimal equilibrium through an explicit formula. 



\subsection{Stopping of a Bessel Process}\label{subsec:Bessel}
Consider a one-dimensional Bessel process $X$, given by $X_t := |W_t|$, where $W$ is a one-dimensional Brownian motion. The state space is then $\X=[0,\infty)$. Let the payoff function be $f(x)=x$ on $\X$, and the discount function be of the hyperbolic type as in \eqref{hyperbolic}. The expected payoff $J(x,R)$ in \eqref{J} then takes the form \eqref{J GBM}. Also, from standard properties of the Brownian motion $W$, one can verify that Assumption~\ref{a1}, \eqref{e34}, and \eqref{e32} are all satisfied. 

As detailed in \cite[p.82]{HN17}, the current setting relates to a real options problem in which the management of a large non-profitable insurance company intends to liquidate or sell the company, and would like to decide when to do so.

The next result presents a precise characterization of all closed equilibria, which is a direct consequence of \cite[Proposition 4.5 and Lemma 4.4]{HN17}.

\begin{lemma}\label{lem:previous result}
$R\in\B(\X)$ is a closed equilibrium if and only if $R = [a,\infty)$ for some $a\in [0,a^*]$, where $a^*>0$ is characterized by $a^*\int_0^\infty e^{-s}\sqrt{2\beta s}\tanh(a^*\sqrt{2\beta s})ds =1$. Moreover, for any $a\in (0,a^*]$, $J(x,[a,\infty)) > x$ for all $x\in[0,a)$. 
\end{lemma}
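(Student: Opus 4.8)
The plan is to make everything explicit for this Bessel example and then read off the characterization. First I would derive a closed form for $J(\cdot,[a,\infty))$. Starting from $x<a$, the process $X=|W|$ must reach the level $a$ before entering $[a,\infty)$, so $\rho(x,[a,\infty))=T^x_a$ and $X_{\rho}=a$; hence by \eqref{J GBM},
\[
J(x,[a,\infty))=a\,\E^x\Big[\tfrac{1}{1+\beta T^x_a}\Big].
\]
Using the elementary identity $\frac{1}{1+\beta t}=\int_0^\infty e^{-s}e^{-\beta s t}\,ds$ together with Fubini, this equals $a\int_0^\infty e^{-s}\,\E^x[e^{-\beta s T^x_a}]\,ds$. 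The Laplace transform $x\mapsto\E^x[e^{-\lambda T^x_a}]$ solves $\tfrac12 u''=\lambda u$ on $(0,a)$ with the Neumann condition $u'(0)=0$ (reflection at $0$) and $u(a)=1$, giving $\cosh(\sqrt{2\lambda}\,x)/\cosh(\sqrt{2\lambda}\,a)$. Substituting $\lambda=\beta s$ yields
\[
J(x,[a,\infty))=a\int_0^\infty e^{-s}\,\frac{\cosh(\sqrt{2\beta s}\,x)}{\cosh(\sqrt{2\beta s}\,a)}\,ds,\qquad 0\le x\le a,
\]
which is the engine for the whole argument.

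Next I would show every nonempty closed equilibrium $R$ has the form $[a,\infty)$. The key observation is that on any bounded gap $(\ell,r)$ of $R^c$ with $\ell,r\in R$, the process started inside exits at $\ell$ or $r$, and the same Laplace trick represents $J(\cdot,R)$ on $(\ell,r)$ as $\int_0^\infty e^{-s}u_s\,ds$, where $u_s$ solves $\tfrac12 u_s''=\beta s\,u_s$ with boundary data $u_s(\ell)=\ell$ and $u_s(r)=r$. For $s>0$ this $u_s$ is strictly positive, hence strictly convex, so it lies strictly below its chord, which is exactly the line through $(\ell,\ell)$ and $(r,r)$, i.e. $y=z$; integrating gives $J(z,R)<z=f(z)$ for every $z\in(\ell,r)$. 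Thus $z\in S_R$, contradicting $S_R=\emptyset$ for equilibria (\lemref{lem:bar R in E}(i)). The identical computation with left boundary value $0$ rules out an isolated point at the reflecting boundary $0$, and the case ``$R$ bounded above by $b:=\sup R\in R$'' is excluded because for $z>b$ one gets $J(z,R)=b\,\E^z[(1+\beta T^z_b)^{-1}]<b<z$, again putting $z\in S_R$. Together with closedness, this forces $R=[a,\infty)$ with $a=\inf R\ge 0$.

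Finally I would pin down the admissible thresholds. Set $g_a(x):=J(x,[a,\infty))-x$; differentiating the explicit formula under the integral shows $g_a$ is strictly convex on $[0,a]$ with $g_a(a)=0$ and $g_a'(a)=\phi(a)-1$, where $\phi(a):=a\int_0^\infty e^{-s}\sqrt{2\beta s}\,\tanh(\sqrt{2\beta s}\,a)\,ds$. One checks $\phi$ is continuous and strictly increasing with $\phi(0^+)=0$ and $\phi(\infty)=\infty$, so $\phi(a)=1$ has a unique root $a^*$, exactly the one in the statement. If $a\le a^*$, then $g_a'(a)\le 0$, and by strict convexity $g_a'(x)<g_a'(a)\le 0$ on $[0,a)$, so $g_a$ is strictly decreasing and $g_a>0$ on $[0,a)$; this simultaneously yields $[a,\infty)\in\cE$ (indeed $R^c=[0,a)\subseteq C_R$, so $S_R=\emptyset$) and the ``Moreover'' claim $J(x,[a,\infty))>x$. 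If $a>a^*$, then $g_a'(a)>0$, so $g_a<0$ just to the left of $a$, giving $S_R\neq\emptyset$ and ruling out the equilibrium.

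The main obstacle I anticipate is the structural step of the second paragraph: rigorously excluding bounded gaps, isolated points, and unbounded-above equilibria. The convexity-below-chord argument is clean once the Laplace/ODE representation of $J(\cdot,R)$ inside a gap is justified, but care is needed with the behaviour at the reflecting point $0$ and with interchanging the $ds$-integral and the expectation --- precisely the places where \asref{a1} and the regularity of $X$ enter. By contrast, the monotonicity and limits of $\phi$, and the differentiation under the integral sign, are routine via dominated convergence using bounds of the type in \eqref{e32}.
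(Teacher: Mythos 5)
Your proposal is correct, but it is worth pointing out that the paper does not actually prove this lemma at all: it is stated as ``a direct consequence of \cite[Proposition 4.5 and Lemma 4.4]{HN17}'', so the entire content is delegated to that reference. What you have written is a self-contained reconstruction of the analysis behind the citation, and it matches the structure one would expect: your Laplace-transform identity $\tfrac{1}{1+\beta t}=\int_0^\infty e^{-s}e^{-\beta s t}\,ds$ plus the $\cosh$ formula for $\E^x[e^{-\lambda T^x_a}]$ gives the explicit representation of $J(\cdot,[a,\infty))$, and the defining equation for $a^*$ in the statement is precisely your condition $\phi(a)=\partial_x J(x,[a,\infty))\big|_{x=a}=1$, so the threshold analysis via $g_a'(a)=\phi(a)-1$, with $\phi$ continuous and strictly increasing from $0$ to $\infty$, correctly delivers both directions of the equivalence and the ``Moreover'' claim. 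The structural step is also sound: inside any bounded gap $(\ell,r)$ of a closed set $R$ with $\ell,r\in R$, the entrance time into $R$ coincides with the exit time from $(\ell,r)$, the mixture representation $J(z,R)=\int_0^\infty e^{-s}u_s(z)\,ds$ holds by Tonelli, and strict positivity of $u_s$ forces $u_s''=2\beta s\,u_s>0$, hence $u_s$ lies strictly below the identity chord; this puts the gap inside $S_R$, contradicting Lemma~\ref{lem:bar R in E}(i), and your separate treatments of a gap abutting the reflecting boundary ($\sinh$ solution) and of a bounded-above equilibrium are equally valid. Two small points to tighten: the ``only if'' direction should also exclude $R=\emptyset$, which is immediate since \eqref{e34} gives $J(x,\emptyset)=0<x$ for $x>0$; and in the ``if'' direction, membership of the boundary point $a$ itself in $I_{[a,\infty)}$ (i.e. $\rho(a,[a,\infty))=0$ $\P^a$-a.s.) rests on Assumption~\ref{a1} via Lemma~\ref{lem:rho R = rho bar R}, which your closing paragraph acknowledges only implicitly. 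In short, your route buys a verifiable, self-contained proof within the present setting, at the cost of redoing computations the paper gets for free from \cite{HN17}.
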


\begin{proposition}\label{prop:a*}
$R^* = [a^*,\infty)$, with $a^*>0$ as in Lemma~\ref{lem:previous result}, is the unique closed optimal equilibrium. 
\end{proposition}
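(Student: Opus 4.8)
The plan is to invoke the general machinery of Theorems~\ref{t1} and \ref{t2}, using the explicit description of closed equilibria supplied by \lemref{lem:previous result}. Since Assumption~\ref{a1}, \eqref{e34}, and \eqref{e32} have already been verified in the present Bessel setting, \thmref{t1} immediately guarantees that $R^*$ in \eqref{R*} is an optimal equilibrium; the task thus reduces to (a) identifying $R^*$ explicitly and (b) checking the uniqueness criterion \eqref{for uniqueness}.

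First I would compute $R^*$. By \lemref{lem:previous result}, the collection of closed equilibria is exactly $\{[a,\infty) : a\in[0,a^*]\}$. Intersecting over this family, a point $x$ lies in every $[a,\infty)$ precisely when $x\ge a$ for all $a\in[0,a^*]$, i.e. when $x\ge a^*$. Hence $R^* = \bigcap_{a\in[0,a^*]}[a,\infty) = [a^*,\infty)$, so that $(R^*)^c = [0,a^*)$.

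Next I would verify \eqref{for uniqueness}, namely $(R^*)^c = C_{R^*}$. For the inclusion $[0,a^*)\subseteq C_{R^*}$, I apply the second assertion of \lemref{lem:previous result} with $a=a^*$: it gives $J(x,[a^*,\infty)) > x = f(x)$ for all $x\in[0,a^*)$, which is exactly the statement that every such $x$ belongs to $C_{R^*}$. For the reverse inclusion, any $x\in R^* = [a^*,\infty)$ satisfies $\rho(x,R^*)=0$, hence $J(x,R^*)=f(x)$ and $x\in I_{R^*}$, so $C_{R^*}\subseteq (R^*)^c$. Combining the two inclusions yields $C_{R^*} = [0,a^*) = (R^*)^c$, which is precisely \eqref{for uniqueness}.

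With \eqref{for uniqueness} established and $R^*$ already known to be an optimal equilibrium, \thmref{t2} concludes directly that $R^* = [a^*,\infty)$ is the unique closed optimal equilibrium. There is no serious obstacle here: the whole argument is a clean specialization of the abstract results, and the computations—intersecting the family of closed equilibria and reading off the strict inequality from \lemref{lem:previous result}—are routine. The only point requiring a moment of care is the inclusion $C_{R^*}\subseteq (R^*)^c$, i.e. confirming that the indifference region does not leak outside $R^*$; this follows immediately from the closedness of $R^*$ together with $\rho(\cdot,R^*)=0$ on $R^*$.
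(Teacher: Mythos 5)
Your proposal is correct and follows essentially the same route as the paper's own proof: identify $R^* = \bigcap_{a\in[0,a^*]}[a,\infty) = [a^*,\infty)$ via Lemma~\ref{lem:previous result}, invoke Theorem~\ref{t1} for optimality, use the strict inequality $J(x,[a^*,\infty))>x$ on $[0,a^*)$ to verify \eqref{for uniqueness}, and conclude uniqueness from Theorem~\ref{t2}. Your extra check that $C_{R^*}\subseteq (R^*)^c$ (via $\rho(\cdot,R^*)=0$ on $R^*$, which rests on Assumption~\ref{a1}) is a point the paper leaves implicit, but it is the same argument.
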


\begin{proof}
In view of \eqref{R*} and Lemma~\ref{lem:previous result}, $R^* = \bigcap_{a\in[0,a^*]} [a,\infty) = [a^*,\infty)$. By Theorem~\ref{t1}, $R^*= [a^*,\infty)$ is an optimal equilibrium. Thanks to Lemma~\ref{lem:previous result} again, $J(x,[a^*,\infty)) > x$ for all $x\in[0,a^*)$, which implies $(R^*)^c = [0,a^*)= C_{R^*}$. Theorem~\ref{t2} then asserts that $R^*$ is the unique closed optimal equilibrium. 
\end{proof}

Since the payoff function $f(x) =x$ is strictly increasing, it is reasonable to focus on threshold-type stopping policies $[a,\infty)$, as indicated in Lemma~\ref{lem:previous result}. The challenge here is how large $a\ge 0$ should be, so that the resulting policy can be an equilibrium. 
Intuitively, if $a\ge 0$ is too large, the discounting involved $\frac{1}{1+\beta \rho(x,[a,\infty))}$ may outweigh the payoff $a$ obtained at stopping, reducing the expected payoff $J(x,[a,\infty)) = \E^x\big[\frac{a}{1+\beta \rho(x,[a,\infty))}\big]$. Thus, anticipating that future selves will follow too large a threshold $a\ge 0$, the current self may decide to stop immediately -- such that $[a,\infty)$ cannot be an equilibrium. The main contribution of Lemma~\ref{lem:previous result} is to provide a sharp upper bound $a^*$ for the threshold $a\ge 0$ such that $[a,\infty)$ is an equilibrium. 
Now, since the discount function \eqref{hyperbolic} induces decreasing impatience, the agent prefers smaller equilibria over larger ones; see Corollary~\ref{coro:smaller better} and the detailed discussion below it. This readily shows, at least at the intuitive level, that the smallest possible equilibrium $[a^*,\infty)$ is optimal, which is rigorously established in Proposition~\ref{prop:a*}.

That $[a^*,\infty)$ is an optimal equilibrium can in fact be derived from \cite[Proposition 4.10]{HN17}, which was based on a direct calculation specifically for the current stopping problem. Here, we see that our general theoretic results recover the same conclusion.


\subsection{Stopping of a Geometric Brownian Motion}\label{subsec:GBM}
Consider a geometric Brownian motion $X$ given by \eqref{GBM}, with state space $\X=(0,\infty)$, as well as the payoff function $f(x)=x$ on $\X$ and the hyperbolic discount function specified in \eqref{hyperbolic}. The expected payoff $J(x,R)$ in \eqref{J} then takes the form \eqref{J GBM}. The current setting describes the problem where an agent, who discounts hyperbolically, wants to find a suitable time to liquidate the asset $X$. 

Note that $X$ satisfies Assumption~\ref{a1}, thanks to Remark~\ref{rem:Assumption satisfied}. The conditions \eqref{e34} and \eqref{e32}, however, do {\it not} always hold.
Recall the constant $\nu\in\R$ defined in \eqref{nu}. 

\begin{remark}\label{rem:conditions not satisfied}
In view of \eqref{X(infty)}, \eqref{e34} is satisfied only when $\nu \le 0$. On the other hand, \eqref{e32} is violated whenever $\nu > -1/2$. Indeed, since $\nu> -1/2$ is equivalent to $\mu >0$, for any $x>0$, 
\begin{align*}
\E^x\bigg[\sup_{t\in[0,\infty)}\delta(t) X_t\bigg] &=\lim_{T\to\infty} \E^x\bigg[\sup_{t\in[0,T]}\delta(t) X_t\bigg]  \ge \lim_{T\to\infty} \E^x\left[\delta(T) X_T\right] = \lim_{T\to\infty} \frac{e^{\mu T}}{1+\beta T}= \infty.
\end{align*}
\end{remark}

The fact that \eqref{e34} and \eqref{e32} need not always hold poses a challenge to finding optimal equilibria. First, $R^*$ in \eqref{R*} is no longer a guaranteed optimal equilibrium, as Theorem~\ref{t1} may not be applicable. Even the fundamental result that the intersection of equilibria remains an equilibrium is now in question, as Propositions~\ref{p2} and \ref{p2'} require \eqref{e34} and \eqref{e32} both. In the following, we will deal with the cases $\nu>0$, $\nu\in (-1/2,0]$, and $\nu\le -1/2$ separately. We will see that results in Section~\ref{sec:existence} can still be applied, after we modify our current stopping problem appropriately. 

For $\nu>0$, an optimal equilibrium readily exists, thanks to Example~\ref{eg:nu>0}. 

\begin{proposition}\label{prop:nu>0}
Let $\nu>0$. Then $R^* = \emptyset$ is an optimal equilibrium. 
\end{proposition}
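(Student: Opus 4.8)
The plan is to read off the result almost directly from Example~\ref{eg:nu>0}, which has already carried out the essential computation, so the proof amounts to assembling the pieces and matching them to the definitions. First I would recall that for $\nu > 0$ the limiting behavior recorded in \eqref{X(infty)} gives $\lim_{t\to\infty} X^x_t/(1+\beta t) = \infty$ $\P^x$-a.s.\ for every $x > 0$. Since $\rho(x,\emptyset) = \inf\{t > 0 : X^x_t \in \emptyset\} = \infty$, the $\limsup$ convention for evaluating $\delta(\tau) f(X^x_\tau)$ on $\{\tau = \infty\}$ yields $J(x,\emptyset) = \E^x[\limsup_{t\to\infty} X^x_t/(1+\beta t)] = \infty$ for all $x > 0$.

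Next I would verify that $\emptyset\in\cE$. Because $J(x,\emptyset) = \infty > x = f(x)$, the stopping region $S_\emptyset = \{x : f(x) > J(x,\emptyset)\}$ is empty, and since $I_\emptyset\cap\emptyset = \emptyset$, formula \eqref{Theta} gives $\Theta(\emptyset) = \emptyset$, so $\emptyset$ is an equilibrium. Optimality is then immediate: the associated value satisfies $V(x,\emptyset) = f(x)\vee J(x,\emptyset) = \infty$ for all $x > 0$, which dominates $V(x,R)$ for every $R\in\cE$ at every state. This is precisely the content of Corollary~\ref{coro:emptyset is optimal}, which I would simply invoke to conclude that $\emptyset$ is an optimal equilibrium.

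Finally, to identify this optimal equilibrium with $R^*$ in \eqref{R*}, I would observe that $\emptyset$ is closed and is a (closed) equilibrium, hence it appears among the sets over which the intersection in \eqref{R*} is taken; therefore $R^*\subseteq\emptyset$, forcing $R^* = \emptyset$. There is no genuine obstacle in this argument, since all the analytic work was done in Example~\ref{eg:nu>0}; the only point requiring a moment's care is the evaluation of $J(x,\emptyset)$ under the $\limsup$ convention on $\{\rho(x,\emptyset)=\infty\}$, which was already settled there and which I would cite rather than redo.
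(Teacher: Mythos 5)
Your proof is correct and follows essentially the same route as the paper: invoke Example~\ref{eg:nu>0} (via Corollary~\ref{coro:emptyset is optimal}) to conclude that $\emptyset$ is an optimal equilibrium, then note that $\emptyset$, being a closed equilibrium, appears in the intersection \eqref{R*}, forcing $R^*=\emptyset$. The extra detail you supply (checking $\Theta(\emptyset)=\emptyset$ and the $\limsup$ convention) is fine but already contained in the cited example.
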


\begin{proof}
From Example~\ref{eg:nu>0}, we already know that $\emptyset$ is an optimal equilibrium. Then $R^* = \emptyset$, simply from its definition in \eqref{R*}. 
\end{proof}

For $\nu \le -1/2$, we first observe that any closed equilibrium has to take a specific form. 

\begin{lemma}\label{lem:form of cE}
Let $\nu\le -1/2$. For any $R \in\cE$ that is closed, $R = (0,\infty)$ or $R= [a,\infty)$ for some $a>0$.  
\end{lemma}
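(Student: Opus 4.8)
The claim is that, when $\nu \le -1/2$, any closed equilibrium $R$ must be either $(0,\infty)$ or of the form $[a,\infty)$ for some $a>0$. This is a structural/topological statement about the shape of closed equilibria, given the specific dynamics (geometric Brownian motion) and payoff $f(x)=x$. My overall strategy is to rule out all other possible shapes of a closed set: it cannot have a ``hole'' in the interior (a bounded gap $(\ell,r)$ with $\ell,r\in R$), and it cannot be bounded above (i.e. it cannot fail to contain some ray $[a,\infty)$). The key tool is the continuation-region analysis together with the limiting behavior $\lim_{t\to\infty} X^x_t/(1+\beta t)=0$ from \eqref{X(infty)} (valid precisely because $\nu\le 0$, hence certainly for $\nu\le -1/2$), which controls $J(x,R)$ when the process must travel far before hitting $R$.

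First I would establish that $R$ cannot be bounded above. Suppose for contradiction $R\subseteq (0,b]$ for some $b$, so that $R$ has a finite supremum $\bar a := \sup R \le b$; by closedness $\bar a\in R$. Then for any $x>\bar a$, stopping from $x$ requires the geometric Brownian motion to come \emph{down} to $\bar a$. Because $\nu\le -1/2<0$, the process has negative drift in the relevant sense and $X^x_t/(1+\beta t)\to 0$, so one expects $J(x,R)=\E^x\big[X_{\rho(x,R)}/(1+\beta\rho(x,R))\big]$ to be dominated by $f(x)=x$ for $x$ large, forcing such $x$ into the stopping region $S_R$. But Lemma~\ref{lem:bar R in E}(i) asserts $S_R=\emptyset$ for any equilibrium under Assumption~\ref{a1}; since geometric Brownian motion satisfies Assumption~\ref{a1} (Remark~\ref{rem:Assumption satisfied}), this is a contradiction. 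Hence $R$ is unbounded above, so $R\supseteq [a,\infty)\cap R$ accumulates at $\infty$; combined with closedness this should give that $R$ contains a ray.

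Next I would rule out interior holes. Suppose $R$ is closed, unbounded above, but not an interval of the stated form, so there is a bounded gap: points $\ell< r$ in $R$ with $(\ell,r)\cap R=\emptyset$. For $x\in(\ell,r)$, we have $\rho(x,R)=T^x_\ell\wedge T^x_r$, the first exit time from $(\ell,r)$, and $X$ stops at either $\ell$ or $r$, both in $R$. I would compute or estimate $J(x,R)$ on this bounded interval using the scale function / hitting-probability structure of geometric Brownian motion, and compare with $f(x)=x$. The goal is to show the gap must be inconsistent with $\Theta(R)=R$: either the indifference/continuation balance forces $(\ell,r)\subseteq C_R$ so these interior points cannot be excluded from an equilibrium consistent with $S_R=\emptyset$ and the value ordering, or the comparison directly violates equilibrium. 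This is essentially the connectedness argument referenced in Example~\ref{eg:counterexample} (``one may argue as in \cite[Lemma 4.3]{HN17}''), and I would invoke or adapt that reasoning: on a bounded gap between two stopping points the discounted value of a strictly increasing payoff cannot equal $f$ at the left endpoint region, yielding a contradiction with $R$ being an equilibrium.

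The main obstacle is the middle step — precisely controlling $J(x,R)$ on bounded pieces of the geometric Brownian motion to eliminate holes, and in particular nailing down the boundary case $\nu=-1/2$ (i.e. $\mu=0$) where the process is driftless in log-coordinates and the long-run decay in \eqref{X(infty)} is borderline. The unboundedness argument is robust for all $\nu\le 0$, but the no-holes argument relies on a genuine monotonicity/convexity comparison tied to the explicit hitting distributions of the process, and I expect the cleanest route is to reduce to the one-dimensional scale-function computation and quote the connectedness lemma of \cite{HN17} rather than re-deriving it. I would finish by noting that once $R$ is closed, unbounded above, and connected, it is exactly a ray $[a,\infty)$ (with $a>0$ since $\X=(0,\infty)$), or all of $(0,\infty)$ in the degenerate case $a=\inf\X$, which is the desired dichotomy.
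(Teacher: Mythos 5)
Your structural decomposition (rule out bounded gaps, rule out boundedness above, then conclude a ray) targets the right statement, and your first step can indeed be made rigorous --- even more simply than you sketch: if $\bar a=\sup R<\infty$, then any stopped payoff is at most $\bar a$ (stopping occurs at a point of $R\subseteq(0,\bar a]$ and $\delta\le 1$), while a never-stopped payoff is $0$ by \eqref{X(infty)}, so $J(x,R)\le\bar a<x=f(x)$ for \emph{every} $x>\bar a$, not just for $x$ large. But the step you yourself flag as ``the main obstacle'' --- ruling out bounded gaps $(\ell,r)$ with $\ell,r\in R$ --- is genuinely missing, and the route you propose for it does not close. The decay $X^x_t/(1+\beta t)\to 0$ says nothing about $J(x,R)$ on a bounded gap, where the exit time is a.s.\ finite; and \cite[Lemma 4.3]{HN17} concerns the Bessel process, so quoting it for a geometric Brownian motion with $\mu\le 0$ is not legitimate without redoing the estimate --- which is exactly the part you leave as an unverified scale-function computation. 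Two smaller issues: you never handle $R=\emptyset$ (your step 1 takes $\bar a=\sup R\in R$, which fails for the empty set; the paper excludes it via $J(x,\emptyset)=0<x$), and your remark that $\nu=-1/2$ makes the process ``driftless in log-coordinates'' is wrong: $\nu=-1/2$ means $\mu=0$, i.e.\ $X$ itself is a martingale, while $\log X$ has drift $-\sigma^2/2$.

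The missing idea, which is what the paper uses and which collapses the whole lemma into a few lines, is that $\nu\le -1/2$ is equivalent to $\mu\le 0$, so $X$ is a nonnegative supermartingale. Take $a=\inf R$ and any $x>a$ with $x\notin R$; by closedness of $R$ the gap endpoints satisfy $\ell<x<r$, hence $\rho(x,R)>0$ $\P^x$-a.s. Then
\[
J(x,R)=\E^x\left[\delta(\rho(x,R))\,X_{\rho(x,R)}\right]<\E^x\left[X_{\rho(x,R)}\right]\le x=f(x),
\]
where the strict inequality holds because $\delta(t)<1$ for $t>0$, and the last inequality is optional sampling for the nonnegative supermartingale $X$. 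Hence $x\in S_R$, contradicting $x\notin R=\Theta(R)$. This single estimate disposes of bounded gaps and of boundedness above simultaneously, and it covers the boundary case $\mu=0$ with no extra work --- precisely the case you were unsure about. Your plan, by contrast, splits the proof into two cases and leaves the harder one resting on a computation you did not carry out and a lemma from a different model whose adaptation you did not verify; that is a genuine gap rather than an alternative proof.
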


\begin{proof}
For any $R\in\cE$ that is closed, observe that $R\neq \emptyset$. Indeed, if $R=\emptyset$, \eqref{X(infty)} implies $J(x,R)= 0 < x$ for all $x>0$, which contradicts $R$ being an equilibrium. Assume $R\neq\emptyset$, and take $a := \inf R\ge 0$. By contradiction, suppose there exists $x\in (a,\infty)$ such that $x\notin R$. Define
\begin{equation}\label{bounds}
\ell := \sup\{y\in R: y<x\}\quad \hbox{and}\quad r:= \inf\{y\in R : y>x\},
\end{equation}
where the infimum is taken to be $\infty$ if there exists no $y\in R$ with $y>x$. 
By the closedness of $R$, we have $\ell <x$ and $r>x$, and thus $\rho(x,R)>0$ $\P^x$-a.s. For $\nu\le -1/2$, or equivalently $\mu\le 0$, $X$ is a supermartingale. It follows that
$
J(x,R) < \E^x\left[X_{\rho(x,R)}\right]  \le x,  
$
where the first inequality follows from $\rho(x,R)>0$ and the second the supermartingale property. This implies $x\in S_R$, which contradicts the fact that $R$ is an equilibrium. 
\end{proof}

To understand the behavior of $J(x,R)$, given in \eqref{J GBM}, we need the next technical lemma.  

\begin{lemma}\label{lem:kappa}
For any $a>0$, define 
\begin{equation}\label{kappa}
\kappa(x,a) := \E^x\left[\frac{a}{1+\beta T^x_a}\right]\quad \hbox{for}\ \ 0< x\le a,
\end{equation}
where $T^x_a$ is defined as in \eqref{T^x_y}. Then, 
\begin{itemize}
\item [(i)] $x\mapsto \kappa(x,a)$ is strictly increasing on $(0,a)$, with $\lim_{x\to 0}\kappa(x,a) =0$ and $\kappa(a,a)=a$. 
\item [(ii)] If $\nu\le -1/2$, $x\mapsto \kappa(x,a)$ is strictly convex on $(0,a)$. 
\item [(iii)] If $\nu >-1/2$, then $\lim_{x\to 0}\kappa_{x}(x,a)=\infty$, and $x\mapsto \kappa(x,a)$ is either strictly concave on $(0,a)$, or strictly concave on $(0,x^*)$ and strictly convex on $(x^*,a)$ for some $x^*\in (0,a)$.
\end{itemize}
\end{lemma}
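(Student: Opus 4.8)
The plan is to derive an explicit integral representation of $\kappa(\cdot,a)$ and then read off all three properties from it. Writing $\frac{1}{1+\beta t}=\int_0^\infty e^{-s}e^{-\beta s t}\,ds$ and applying Fubini, I would obtain
$$\kappa(x,a)=a\int_0^\infty e^{-s}\,\E^x\big[e^{-\beta s\,T^x_a}\big]\,ds.$$
Since $\log X$ is a Brownian motion with drift $\sigma^2\nu$ and volatility $\sigma$, the first-passage Laplace transform for the upper level $\log a$ (recall $x\le a$) is the classical formula $\E^x[e^{-\beta s T^x_a}]=(x/a)^{p(s)}$, where $p(s):=\sqrt{\nu^2+2\beta s/\sigma^2}-\nu\ge 0$. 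This yields the clean representation
$$\kappa(x,a)=a\int_0^\infty e^{-s}\,(x/a)^{p(s)}\,ds,\qquad 0<x\le a,$$
which is the backbone of the whole proof.

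Parts (i) and (ii) should then follow quickly. For (i), $\kappa(a,a)=a$ is immediate, $\lim_{x\to0}\kappa=0$ follows by dominated convergence since $p(s)>0$ for $s>0$, and differentiating under the integral gives $\kappa_x(x,a)=\int_0^\infty e^{-s}p(s)(x/a)^{p(s)-1}\,ds>0$, so $\kappa$ is strictly increasing. For (ii), when $\nu\le-1/2$ one has $p(s)\ge-2\nu\ge1$ for all $s$, hence $p(s)(p(s)-1)\ge0$ (strict for $s>0$), and the second-derivative formula
$$\kappa_{xx}(x,a)=\tfrac1a\int_0^\infty e^{-s}p(s)(p(s)-1)(x/a)^{p(s)-2}\,ds>0$$
gives strict convexity. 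I would justify differentiation under the integral on compact subintervals of $(0,a)$ using the $e^{-s}$ decay against the polynomial growth of $p(s)^2$.

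Part (iii) is the heart of the matter, and its two claims need separate treatment. For the gradient blow-up, when $\nu>-1/2$ we have $p(0)=|\nu|-\nu<1$, so I would fix $\eta\in(p(0),1)$ and a small $s_\eta>0$ with $p(s)\le\eta$ on $[0,s_\eta]$; since $(x/a)^{p(s)-\eta}\ge1$ there, bounding $\kappa_x(x,a)\ge (x/a)^{\eta-1}\int_0^{s_\eta}e^{-s}p(s)\,ds$ forces $\kappa_x\to\infty$ as $x\to0$. For the concave/convex dichotomy, the task is to show $\kappa_{xx}(\cdot,a)$ changes sign at most once and only from $-$ to $+$. I would substitute $t:=-\log(x/a)\in(0,\infty)$ and change variables from $s$ to $p$ (using $s=\frac{\sigma^2}{2\beta}p(p+2\nu)$, so $ds=\frac{\sigma^2}{\beta}(p+\nu)\,dp$ with $p+\nu>0$ on the range $p>p(0)$), reducing the sign of $\kappa_{xx}$ to that of a Laplace transform
$$\tilde g(t)=\int_{p(0)}^\infty \psi(p)\,e^{-tp}\,dp,\qquad \psi(p):=w(p)\,p(p-1),$$
where $w(p)>0$ and $\psi$ changes sign exactly once, from $-$ to $+$, at $p=1$ (note $p(0)<1$).

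The key step, which I expect to be the main obstacle, is a variation-diminishing property: the Laplace transform of a function with a single $(-,+)$ sign change has at most one sign change in $t$, necessarily from $+$ to $-$. Rather than invoke total-positivity machinery, I would prove it directly: if $\tilde g(t_0)\ge0$, then for $t<t_0$, splitting $\int\psi(p)e^{-t_0p}e^{(t_0-t)p}\,dp$ at $p=1$ and using that $e^{(t_0-t)p}$ is increasing in $p$ while $\psi<0$ below $1$ and $\psi>0$ above $1$, one gets $\tilde g(t)\ge e^{t_0-t}\tilde g(t_0)\ge0$. Hence $\{\tilde g>0\}$ is a left interval, so $\tilde g$ switches sign at most once, from $+$ to $-$, as $t$ increases. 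A Watson-type asymptotic as $t\to\infty$, dominated by $p\downarrow p(0)$ where $\psi<0$, shows $\tilde g(t)<0$ for large $t$, which pins the direction. Translating back, $x\to0$ corresponds to $t\to\infty$, so $\kappa_{xx}<0$ near $0$ and, once it turns positive, it stays positive; this is exactly the stated dichotomy, with $x^*/a$ the unique crossing point.
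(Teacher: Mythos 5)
Your integral representation $\kappa(x,a)=a\int_0^\infty e^{-s}(x/a)^{p(s)}\,ds$, your proofs of (i) and (ii), and your minorization argument for $\lim_{x\to 0}\kappa_x(x,a)=\infty$ all match the paper's proof up to cosmetic differences (the paper uses the same Borodin--Salminen formula, and proves the blow-up by splitting the integral at the point $s^*$ where $p(s^*)=1$). Where you genuinely diverge is the concave/convex dichotomy, which is indeed the heart of (iii). The paper stays in the $x$-variable: it writes $\kappa_{xx}(x,a)=\frac{a}{x^2}q(x)$ with $q(x)=\int_0^\infty e^{-s}p(s)\left(p(s)-1\right)(x/a)^{p(s)}\,ds$, shows by differentiating once more that $q$ is strictly convex on $(0,a)$ with $q(0)=0$ --- hence $q$ changes sign at most once, from $-$ to $+$ --- and rules out the ``$q>0$ everywhere'' (all-convex) case by reusing the gradient blow-up, since a convex $\kappa$ cannot have $\kappa_x\to\infty$ at the left endpoint. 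You instead pass to $t=-\log(x/a)$, identify the sign of $\kappa_{xx}$ with that of the Laplace transform $\tilde g(t)=\int_{p(0)}^\infty\psi(p)e^{-tp}\,dp$ of a function with a single $(-,+)$ sign change, prove the variation-diminishing property directly via the splitting of $e^{(t_0-t)p}$ against $e^{t_0-t}$ at $p=1$ (this step is correct; note that the comparison is strict on $\{p>1\}$, which upgrades $\tilde g(t)\ge 0$ to $\tilde g(t)>0$ for $t<t_0$ --- needed for the \emph{strict} convexity/concavity claims), and pin the direction by a Watson-type expansion near $p\downarrow p(0)$, where $\psi<0$. Both arguments are valid. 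The paper's is more self-contained and shorter, since the blow-up of $\kappa_x$ does double duty and no asymptotic analysis is required; you could make the same economy by replacing your Watson step with the observation that $\tilde g\ge 0$ for all large $t$ would make $\kappa$ convex near $x=0$, contradicting $\kappa_x\to\infty$. What your route buys is transparency and generality: it isolates the structural reason for the single sign change (total positivity of the Laplace kernel) and would apply verbatim to any payoff whose transformed integrand $\psi$ has one sign change, whereas the paper's convexity-of-$q$ trick exploits the specific form of the integrand.
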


\begin{proof}
By definition, $\kappa(a,a)=a$. Let $p(t)$ denote the density function of $T^x_a$. Observe that
\begin{align*}
\kappa(x,a) &= a \int_0^\infty \frac{ p(t) }{1+\beta t}dt= a \int_0^\infty \int_0^\infty e^{-(1+\beta t)s} p(t) ds dt\\
&= a \int_0^\infty e^{-s} \E^x\left[e^{-\beta s T^x_{a}}\right] ds= a \int_0^\infty e^{-s} \left(\frac{x}{a}\right)^{\sqrt{\nu^2+\frac{2\beta s}{\sigma^2}}-\nu} ds,
\end{align*}
where the last equality follows from \cite[Formula 2.0.1 on p. 628]{BS-book-2002}. This particularly shows that $\lim_{x\to 0}\kappa(x,a)=0$. Consider 
\begin{equation}\label{g(s,nu)}
g(s,\nu) := \sqrt{\nu^2+2\beta s/\sigma^2}-\nu.
\end{equation}
By definition, $g(s,\nu)>0$ for all $s>0$. Observe that
\begin{equation}\label{kappa_x}
\kappa_x(x,a) = \int_0^\infty e^{-s} g(s,\nu)\left(\frac{x}{a}\right)^{g(s,\nu)-1} ds\ >0\qquad \forall x\in (0,a),
\end{equation}
and thus $x\mapsto \kappa(x,a)$ is strictly increasing on $(0,a)$. 

If $\nu\le -1/2$, then $g(s,\nu) > |\nu|-\nu \ge1$ for all $s>0$. It follows that $x\mapsto \kappa_{x}(x,a)$ is strictly increasing on $(0,a)$, i.e. $x\mapsto \kappa(x,a)$ is strictly convex on $(0,a)$. 

If $\nu > -1/2$, there must exist $s^*>0$ such that $g(s,\nu)-1<0$ if and only if $s<s^*$. Then $\kappa_{x}(x,a)$ can be re-written as 
\begin{align}
\kappa_{x}(x,a) = &\int_0^{s^*} e^{-s} g(s,\nu)\left(\frac{a}{x}\right)^{1-g(s,\nu)} ds+ \int_{s^*}^\infty e^{-s} g(s,\nu) \left(\frac{x}{a}\right)^{g(s,\nu)-1} ds.\label{kappa_x'} 
\end{align}
Observe that $\lim_{x\to 0}\kappa_{x}(x,a)=\infty$, as the first term on the right hand side above explodes, while the second term vanishes. 
Moreover, from \eqref{kappa_x},
\begin{align}\label{kappa_xx}
\kappa_{xx}(x,a) &= \frac{1}{a} \int_0^\infty e^{-s} g(s,\nu) \big( g(s,\nu)-1 \big)\left(\frac{x}{a}\right)^{g(s,\nu)-2} ds = \frac{a}{x^2} q(x),
\end{align}
where
\begin{align*}
q(x)&:= \int_0^{s^*} e^{-s} g(s,\nu) \big(g(s,\nu)-1 \big)\left(\frac{x}{a}\right)^{g(s,\nu)} ds+ \int_{s^*}^\infty e^{-s} g(s,\nu) \big(g(s,\nu)-1 \big)\left(\frac{x}{a}\right)^{g(s,\nu)} ds.
\end{align*}
We claim that $q$ is strictly convex on $(0,a)$. Indeed, a direct calculation yields
\begin{align*}
q'(x)&:= \frac{1}{a}\bigg[\int_0^{s^*} e^{-s} g^2(s,\nu) \big(g(s,\nu)-1 \big)\left(\frac{a}{x}\right)^{1-g(s,\nu)} ds\\
&\hspace{0.6in}+ \int_{s^*}^\infty e^{-s} g^2(s,\nu) \big(g(s,\nu)-1 \big)\left(\frac{x}{a}\right)^{g(s,\nu)-1} ds\bigg].
\end{align*}
Since $g(s,\nu)-1<0$ if and only if $s<s^*$, each of the two terms on the right hand side above is strictly increasing in $x$; specifically, the first term becomes less negative, while the second term becomes more positive, as $x$ increases. Thus, $q$ is strictly convex on $(0,a)$. By definition, $q(0)=0$. The strict convexity of $q$ then entails  either of the two cases: (i) $q(x)>0$ on $(0,a)$, or (ii) $q(x)<0$ on $(0,x^*)$ and $q(x)>0$ on $(x^*,a)$, with 
\[
x^* := \inf\{x>0: q(x) = 0\}\wedge a.
\] 
If Case (i) holds, then $x\mapsto\kappa(x,a)$ is strictly convex on $(0,a)$, in view of \eqref{kappa_xx}. This, however, contradicts $\lim_{x\to 0}\kappa_{x}(x,a)=\infty$. Thus, we must be in Case (ii). Thanks to \eqref{kappa_xx} again, if $x^* = a$, then $x\mapsto\kappa(x,a)$ is strictly concave on $(0,a)$; if $x^*< a$, then $x\mapsto\kappa(x,a)$ is strictly concave on $(0,x^*)$, and strictly convex on $(x^*,a)$.  
\end{proof}

\begin{proposition}\label{prop:nu<=-1/2}
Let $\nu\le -1/2$. Then $(0,\infty)$ is the only closed equilibrium. Hence, $R^*=(0,\infty)$ is the unique closed optimal equilibrium. 
\end{proposition}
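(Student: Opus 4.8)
The plan is to reduce the whole statement to ruling out threshold sets $[a,\infty)$ with $a>0$. By \lemref{lem:form of cE}, every closed equilibrium is either $(0,\infty)$ or of the form $[a,\infty)$ for some $a>0$, and $(0,\infty)=\X$ is always an equilibrium by \remref{rem:trivial equilibrium}. Hence the first assertion of the proposition amounts to showing that no set $[a,\infty)$ with $a>0$ can be an equilibrium; once this is done, $(0,\infty)$ is exhibited as the \emph{only} closed equilibrium.

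To rule out $[a,\infty)$, fix $a>0$ and take $x\in(0,a)$. Since $X$ is continuous and starts strictly below $a$, the path must first hit $a$ before entering $[a,\infty)$, so $\rho(x,[a,\infty))=T^x_a$ and therefore $J(x,[a,\infty))=\kappa(x,a)$ in the notation of \eqref{kappa}. I now invoke \lemref{lem:kappa}: parts (i) and (ii) give that, for $\nu\le -1/2$, the map $x\mapsto\kappa(x,a)$ is strictly convex on $(0,a)$ with $\lim_{x\to 0}\kappa(x,a)=0$ and $\kappa(a,a)=a$. Consequently the function $x\mapsto \kappa(x,a)-x$ is strictly convex on $(0,a)$ and vanishes (in the limit) at both endpoints $0$ and $a$; a strictly convex function lying below the chord joining equal endpoint values is strictly negative in between, so $\kappa(x,a)<x$ for every $x\in(0,a)$. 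Thus $f(x)=x>J(x,[a,\infty))$ on $(0,a)$, i.e. $(0,a)\subseteq S_{[a,\infty)}$. Because $X$ satisfies \asref{a1}, \lemref{lem:bar R in E}(i) forces every equilibrium to have empty stopping region; the nonempty $S_{[a,\infty)}$ therefore excludes $[a,\infty)\in\cE$. Hence $(0,\infty)$ is the only closed equilibrium.

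It remains to upgrade ``only closed equilibrium'' to ``unique closed optimal equilibrium'', and here the point is that \emph{every} equilibrium collapses in value to $(0,\infty)$. Indeed, for an arbitrary $R\in\cE$, \lemref{lem:bar R in E}(ii) gives $\overline R\in\cE$, and $\overline R$ is closed, so by the above $\overline R=(0,\infty)=\X$. By \remref{rem:value unchanged}, $V(x,R)=V(x,\overline R)=V(x,\X)$ for all $x$, and since $\rho(x,\X)=0$ yields $J(x,\X)=f(x)$, we conclude $V(x,R)=f(x)$ for every $R\in\cE$. Thus all equilibria share the same value function, so $(0,\infty)$ is (vacuously) optimal, and being the unique closed equilibrium it is \emph{a fortiori} the unique closed optimal equilibrium; note also that $R^*$ in \eqref{R*} equals $(0,\infty)$ directly from its definition.

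The computations are light once \lemref{lem:kappa} is available, and the genuinely delicate point is the final optimality/uniqueness step rather than the exclusion of threshold equilibria. One must remember that Definition~\ref{def:optimal equilibrium} requires comparison against \emph{all} equilibria, including non-closed ones not enumerated by \lemref{lem:form of cE}; this is exactly what passing to closures via \lemref{lem:bar R in E}(ii) together with \remref{rem:value unchanged} resolves. (This also circumvents the fact that \thmref{t1} is unavailable here, since \eqref{e32} fails for $\nu>-1/2$ and need not be assumed in the regime $\nu\le -1/2$.)
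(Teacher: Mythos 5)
Your proposal is correct and follows essentially the same route as the paper: reduce to the threshold sets $[a,\infty)$ via \lemref{lem:form of cE}, exclude them using \lemref{lem:kappa} (i)--(ii) (you merely spell out the convexity-versus-chord comparison that the paper leaves implicit), and then obtain optimality/uniqueness by passing every equilibrium to its closure via \lemref{lem:bar R in E}(ii) and \remref{rem:value unchanged}, exactly as in the paper's proof.
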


\begin{proof}
For any $a>0$, by Lemma~\ref{lem:kappa} (i) and (ii), $x> \kappa(x,a) = J(x,[a,\infty))$ for all $x\in (0,a)$, and thus $(0,a)\subseteq S_{[a,\infty)}$. This implies $[a,\infty)\notin \cE$, for all $a>0$. In view of Lemma~\ref{lem:form of cE}, the only closed equilibrium is $(0,\infty)$. Then $R^* = (0,\infty)$, simply by the definition in \eqref{R*}. Since $X$ satisfies Assumption~\ref{a1}, we deduce from Lemma~\ref{lem:bar R in E} (ii) that any $R\in\cE$ must satisfy $\overline R= (0,\infty)$. This, together with Remark~\ref{rem:value unchanged}, yields $V(x,R)=V(x,(0,\infty))$ for all $x>0$, for any $R\in\cE$. Thus, $R^* = (0,\infty)$ is the unique closed optimal equilibrium.
\end{proof}

For $\nu\in (-1/2,0]$, a clear-cut characterization for closed equilibria as in Lemma~\ref{lem:form of cE} cannot be obtained. Instead, we derive the following relation between different forms of closed equilibria, which will prove instrumental in finding optimal equilibria. 

\begin{lemma}\label{lem:two-part implies one-part}
Let $\nu\in (-1/2,0]$. If there exist $0<\ell<r<\infty$ such that $(0,\ell]\cup [r,\infty)\in \cE$, then $[r,\infty)\in\cE$. 
\end{lemma}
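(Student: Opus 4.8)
The plan is to reduce everything to the scalar function $h(x):=\kappa(x,r)-x$ on $(0,r)$, with $\kappa$ as in \eqref{kappa}. For $x\in(0,r)$ the continuous process reaches $[r,\infty)$ first at the point $r$, so $\rho(x,[r,\infty))=T^x_r$ and $J(x,[r,\infty))=\kappa(x,r)$; since $\Theta(R)=S_R\cup R$ under \asref{a1}, we have $[r,\infty)\in\cE$ iff $S_{[r,\infty)}=\emptyset$, i.e. iff $h\ge0$ on $(0,r)$. The hypothesis $R:=(0,\ell]\cup[r,\infty)\in\cE$ only says $x=f(x)\le J(x,R)$ for $x$ in the gap $(\ell,r)$. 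Splitting $\rho(x,[r,\infty))$ at the gap's exit time $\tau:=T^x_\ell\wedge T^x_r$ and invoking the strong Markov property together with \eqref{e1}, exactly as in \lemref{t3}, yields for $x\in(\ell,r)$
\[
\kappa(x,r)-J(x,R)\ \ge\ \bigl(\kappa(\ell,r)-\ell\bigr)\,\E^x\!\left[\delta(\tau)\,1_{\{X_\tau=\ell\}}\right].
\]
Hence, once $\kappa(\ell,r)\ge\ell$ is known, $\kappa(x,r)\ge J(x,R)\ge x$ on the whole gap; conversely $\kappa(\ell,r)<\ell$ would place $\ell$ in $S_{[r,\infty)}$ and make the conclusion false. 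Thus $\kappa(\ell,r)\ge\ell$ is exactly the crux.

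Before attacking the crux I would dispose of an easy regime. Writing $M_s:=X_s\,\delta(s)=X_s/(1+\beta s)$, It\^o's formula gives $dM_s=M_s\,\phi(s)\,ds+\sigma M_s\,dW_s$ with $\phi(s):=\mu-\beta/(1+\beta s)$. If $\mu\ge\beta$ then $\phi\ge0$, so $M$ is a submartingale; since $M_{s\wedge T^x_r}\le r$ is bounded and $M_{s\wedge T^x_r}\to M_{T^x_r}$ (using \eqref{e34} on $\{T^x_r=\infty\}$), bounded convergence gives $\kappa(x,r)=\E^x[M_{T^x_r}]\ge M_0=x$, so $[r,\infty)\in\cE$ with no appeal to the hypothesis. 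Hence I may assume $\mu<\beta$, in which case $\phi$ is negative on $[0,t_0)$ and positive on $(t_0,\infty)$, with $t_0:=1/\mu-1/\beta>0$.

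Granting the crux $\kappa(\ell,r)\ge\ell$, the comparison inequality above gives $h\ge0$ on $(\ell,r)$, and I would propagate this to all of $(0,r)$ through \lemref{lem:kappa}(iii). Recall $h(0^+)=0$, $h'(0^+)=+\infty$ and $h(r)=0$, and that $\kappa(\cdot,r)$ is either strictly concave on $(0,r)$ or strictly concave on $(0,x^*)$ and strictly convex on $(x^*,r)$. In the purely concave case $h$ is concave with $h(0^+)=h(r)=0$, hence $h\ge0$ outright. In the concave--convex case, $h\ge0$ on a left neighborhood of $r$ together with $h(r)=0$ forces $h'(r^-)\le0$; convexity of $h$ on $(x^*,r)$ then places its graph above the tangent at $r$, so $h(x)\ge h'(r^-)(x-r)\ge0$ there, and in particular $h(x^*)\ge0$; finally, concavity on $(0,x^*)$ places the graph above the nonnegative chord joining $(0,0)$ to $(x^*,h(x^*))$. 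This yields $h\ge0$ on $(0,r)$, i.e. $[r,\infty)\in\cE$.

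The remaining and hardest step is the crux $\kappa(\ell,r)\ge\ell$ when $\mu<\beta$. Since $X_s<r$ for $s<T^x_r$, the stochastic integral is a genuine martingale on the bounded gap and $\tau$ has finite mean, so the equilibrium inequality becomes the clean sign statement
\[
J(x,R)-x=\E^x\!\left[\int_0^{\tau}M_s\,\phi(s)\,ds\right]\ge0,\qquad x\in(\ell,r),
\]
while the same representation (with \eqref{e34}) gives $\kappa(\ell,r)-\ell=\E^\ell\!\left[\int_0^{T^\ell_r}M_s\,\phi(s)\,ds\right]$. The obstacle is precisely the transfer from the two-sided gap integrals to this one-sided integral: a path aiming at $r$ from $\ell$ spends time below $\ell$, and over those excursions the integrand $M_s\phi(s)$, which changes sign at $t_0$, is not directly constrained by the gap condition. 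I expect to close this either by letting $x\downarrow\ell$ in the displayed gap inequality and carefully matching the leading contributions of the short descent to $\ell$ against the ensuing ascent to $r$, or by an excursion decomposition that reconstitutes $\E^\ell[\int_0^{T^\ell_r}M_s\phi\,ds]$ from the gap integrals; the log sub-additivity \eqref{e1} is what makes the discounting combine across successive excursions in the right direction, and getting this bookkeeping right is the technical heart of the argument.
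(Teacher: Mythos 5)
Your reduction is sound as far as it goes: under \asref{a1} the identity $\Theta(R)=S_R\cup R$ does make $[r,\infty)\in\cE$ equivalent to $\kappa(x,r)\ge x$ on $(0,r)$, your comparison inequality obtained by splitting $T^x_r$ at the gap's exit time and invoking the strong Markov property with \eqref{e1} is correct, and the concavity/convexity propagation via \lemref{lem:kappa}(iii) does extend nonnegativity of $h$ from $(\ell,r)$ to all of $(0,r)$. But the proposal does not prove the lemma: everything is conditional on the ``crux'' $\kappa(\ell,r)\ge\ell$, which you explicitly leave open (``I expect to close this either by \dots or by \dots''). That single inequality is not a technical remainder; it is essentially the entire content of the statement, since by your own reduction it is equivalent to the conclusion. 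The hypothesis only constrains $J(\cdot,R)$ on the gap $(\ell,r)$, where paths are stopped at $\ell$ as well as at $r$; as you yourself note, letting $x\downarrow\ell$ degenerates (both $J(x,R)$ and $x$ tend to $\ell$), and a first-order expansion at $\ell$ only produces a relation among boundary derivatives of two-sided exit quantities, with no evident route to the one-sided quantity $\kappa(\ell,r)$. Neither of your two suggested strategies is carried out, and it is far from clear that either can be made to work.

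The paper circumvents the point inequality at $\ell$ entirely, using structure your proposal never touches: scale invariance of geometric Brownian motion. From the explicit formulas for $J(y,R)$ (via \cite[Formula 3.0.5]{BS-book-2002}) one gets the homogeneity \eqref{homogeneity}, hence every dilation $R(\alpha)=(0,\alpha\ell]\cup[\alpha r,\infty)$, $\alpha>0$, is again an equilibrium. Since \eqref{e32} fails for $f(x)=x$ when $\nu>-1/2$, the paper then introduces an auxiliary problem with truncated payoff $\bar f(x)=x\wedge(r+1)$, for which \eqref{e34} and \eqref{e32} hold trivially, checks $R(\alpha)\in\bar\cE$ for $0<\alpha\le 1$, and applies \propref{p2'} to the countable family $R(\underline\alpha^n)$ with $\underline\alpha=(\ell+r)/(2r)$, whose intersection is exactly $[r,\infty)$; the identity $\bar J=J$ below $r$ then transfers the conclusion back to $\cE$. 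If you want to complete your argument, the realistic path is to import this scaling-and-intersection mechanism rather than to attack $\kappa(\ell,r)\ge\ell$ head-on.
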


\begin{proof}
For any $0<\ell<r<\infty$ such that $(0,\ell]\cup [r,\infty)\in \cE$, we must have
\begin{equation}\label{J>y}
J(y,R) \ge y,\quad \hbox{for all $y\in (\ell,r)$}.
\end{equation}
 For any $y\in(\ell,r)$, define $\tau:=\inf\{t\ge 0: X^y_t\notin (\ell,r)\}$, and let $p:(0,\infty)\times\{\ell,r\}\to [0,\infty)$ denote the joint density function of $(\tau, X^y_{\tau})$. Then, 
\begin{align}\label{Laplace}
J(y,R) &= \E^y\left[\frac{X_{\tau}}{1+\beta \tau}\right]= \ell \int_0^\infty \frac{p(t,\ell)}{1+\beta t} dt +r \int_0^\infty \frac{p(t,r)}{1+\beta t} dt \notag\\
&=\ell \int_0^\infty \int_0^\infty  e^{-(1+\beta t)s}p(t,\ell) ds dt + r \int_0^\infty \int_0^\infty e^{-(1+\beta t)s} p(t,r) ds dt\notag\\
&= \ell \int_0^\infty e^{-s}  \E^y\left[ e^{-\beta s \tau} 1_{\{X^y_{\tau} =\ell\}} \right] ds + r \int_0^\infty e^{-s} \E^y\left[ e^{-\beta s \tau} 1_{\{X^y_{\tau} = r \}} \right]  ds.
\end{align}
Thanks to the formulas of $\E^y\big[ e^{-\beta s \tau} 1_{\{X^y_{\tau} =\ell\}} \big]$ and $\E^y\big[ e^{-\beta s \tau} 1_{\{X^y_{\tau} = r\}} \big]$ in \cite[Formula 3.0.5 on p. 633]{BS-book-2002} and $\nu \le 0$, the above equation becomes
\begin{align}\label{J(y,R)}
J(y,R) &= \ell \int_0^\infty  e^{-s} h_1(s,y,\ell,r)\ ds+ r\int_0^\infty  e^{-s} h_2(s,y,\ell,r)\ ds, 
\end{align}
where
\begin{align*}
h_1(s,y,\ell,r) &: =  \left(\frac{y}{\ell}\right)^{|\nu|} \frac{\left(\frac{r}{y}\right)^{\sqrt{\nu^2+2\beta s/\sigma^2}}-\left(\frac{y}{r}\right)^{\sqrt{\nu^2+2\beta s/\sigma^2}}}{\left(\frac{r}{\ell}\right)^{\sqrt{\nu^2+2\beta s/\sigma^2}}-\left(\frac{\ell}{r}\right)^{\sqrt{\nu^2+2\beta s/\sigma^2}}},\\
h_2(s,y,\ell, r) &:= \left(\frac{y}{r}\right)^{|\nu|} \frac{\left(\frac{y}{\ell}\right)^{\sqrt{\nu^2+2\beta s/\sigma^2}}-\left(\frac{\ell}{y}\right)^{\sqrt{\nu^2+2\beta s/\sigma^2}}}{\left(\frac{r}{\ell}\right)^{\sqrt{\nu^2+2\beta s/\sigma^2}}-\left(\frac{\ell}{r}\right)^{\sqrt{\nu^2+2\beta s/\sigma^2}}}. 
\end{align*}
Note that, by definition, 
\begin{equation}\label{homogeneity}
h_1(s,\alpha y, \alpha \ell, \alpha r) = h_1(s, y,\ell, r)\quad \hbox{and}\quad h_2(s,\alpha y, \alpha \ell, \alpha r) = h_2(s, y,\ell, r),\quad \forall \alpha>0. 
\end{equation}
For any $\alpha>0$, take $R(\alpha) := (0,\alpha\ell]\cup [\alpha r,\infty)$. Observe that, by \eqref{homogeneity}, the same calculation leading to \eqref{J(y,R)} yields
\begin{align*}
J(\alpha y, R(\alpha)) &= \alpha\ell \int_0^\infty  e^{-s} h_1(s,y,\ell,r)\ ds+ \alpha r\int_0^\infty  e^{-s} h_2(s,y,\ell,r)\ ds
= \alpha J(y,R),\quad \forall y\in (\ell,r).
\end{align*}
This, together with \eqref{J>y}, gives $J(\alpha y, R(\alpha))\ge \alpha y$ for all $y\in (\ell,r)$, which implies $(\alpha\ell,\alpha r)\subseteq I_{R(\alpha)}\cup C_{R(\alpha)}$. It follows that
\begin{equation}\label{R(alpha) in E}
R(\alpha)=(0,\alpha\ell]\cup [\alpha r,\infty)\in \cE,\quad \forall \alpha>0.
\end{equation}

Now, consider an auxiliary stopping problem with the current payoff function $f(x):=x$ replaced by $\bar f(x) := x\wedge (r+1)$, for all $x>0$. For any $x>0$ and $R\in\B(\X)$, the expected discounted payoff is then
\begin{equation*}
\bar J(x,R) := \E^x\left[\delta(\rho(x,R)) \bar f(X_{\rho(x,R)}) \right]= \E^x\left[\frac{X_{\rho(x,R)}\wedge (r+1)}{1+\beta\rho(x,R)}\right]. 
\end{equation*}
We will denote by $\bar\cE$ the collection of all equilibria for this new stopping problem. The main purpose of introducing this new problem is that since $\bar f$ is bounded, \eqref{e34} and \eqref{e32} are trivially satisfied (with $f$ replaced by $\bar f$), and thus Proposition~\ref{p2'} and Theorem~\ref{t1} can be applied to the new problem. 
For any $0<\alpha\le 1$, observe that 
\[
\bar J(y,R(\alpha)) = J(y,R(\alpha)) \ge y = \bar f(y)\quad \forall y\in (\alpha\ell, \alpha r),
\] 
where the inequality is due to \eqref{R(alpha) in E}. This implies $R(\alpha) \in \bar\cE$ for all $0<\alpha\le 1$. By taking 
$\underline\alpha := \frac{\ell +r}{2r} <1$, we may conclude from Proposition~\ref{p2'} that
\[
[r,\infty) = \bigcap_{n\in \N\cup\{0\}} R(\underline\alpha^n) \in \bar \cE. 
\]
It follows that $J(y,[r,\infty)) = \bar J(y,[r,\infty)) \ge \bar f(y) = y$ for all $y\in (0,r)$, which in turn shows that $[r,\infty)\in\cE$. 
\end{proof}

\begin{proposition}\label{prop:nu in between}
Let $\nu\in(-1/2,0]$. There are three distinct cases. 
\begin{itemize}
\item {\bf Case 1:} $\sqrt{\beta\pi/2\sigma^2}>1$. Then $(0,\infty)$ is the only closed equilibrium, for all $\nu\in(-1/2,0]$. Hence, $R^*=(0,\infty)$ is the unique closed optimal equilibrium, for all $\nu\in(-1/2,0]$.
\item {\bf Case 2:} $\sqrt{\beta\pi/2\sigma^2}=1$. 
		\begin{itemize}
		\item [(i)] if $\nu = 0$, then $[a,\infty)\in \cE$ for all $a\in(0,\infty)$. Hence, there exists no optimal equilibrium.
		\item [(ii)] if $\nu\in(-1/2,0)$, then $(0,\infty)$ is the only closed equilibrium. Hence, $R^*=(0,\infty)$ is the unique closed optimal equilibrium. 
		\end{itemize}
\item {\bf Case 3:} $\sqrt{\beta\pi/2\sigma^2}<1$. There exists $v^*\in (-1/2,0)$ such that
\begin{equation}\label{nu*}
\int_0^\infty e^{-s} \left(\sqrt{(\nu^*)^2+2\beta s/\sigma^2}-\nu^*\right) ds =1.
\end{equation}
		\begin{itemize}
		\item [(i)] if $\nu \in [v^*,0]$, then $[a,\infty)\in \cE$ for all $a\in(0,\infty)$. Hence, there exists no optimal equilibrium.
		\item [(ii)] if $\nu\in (-1/2,v^*)$, then $(0,\infty)$ is the only closed equilibrium. Hence, $R^*=(0,\infty)$ is the unique closed optimal equilibrium. 
		\end{itemize}
\end{itemize}
\end{proposition}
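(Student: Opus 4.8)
The plan is to reduce the entire trichotomy to the scalar quantity
$G(\nu):=\int_0^\infty e^{-s}\big(\sqrt{\nu^2+2\beta s/\sigma^2}-\nu\big)\,ds$, i.e.\ the left-hand side of \eqref{nu*}, and to read off the three cases from the position of $G(\nu)$ relative to $1$. The first step is to pin down exactly when a half-line is an equilibrium. Using the scaling of the geometric Brownian motion together with the integral representation of $\kappa(x,a)$ derived in the proof of Lemma~\ref{lem:kappa}, I would write $\kappa(x,a)=a\,\psi(x/a)$ with $\psi(u):=\int_0^\infty e^{-s}u^{g(s,\nu)}\,ds$ and $g(s,\nu):=\sqrt{\nu^2+2\beta s/\sigma^2}-\nu$. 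Since $J(x,[a,\infty))=\kappa(x,a)$ for $x<a$, the set $[a,\infty)$ is an equilibrium iff $\psi(u)\ge u$ on $(0,1)$ — a condition \emph{independent of} $a$, which is precisely why the half-lines are all equilibria or none are. I would then prove $\psi(u)\ge u$ on $(0,1)\iff G(\nu)\le 1$: setting $\phi(u):=\psi(u)-u$ we have $\phi(0)=\phi(1)=0$, $\phi'(0^+)=+\infty$ and $\phi'(1)=G(\nu)-1$, while Lemma~\ref{lem:kappa}(iii) gives that $\phi''=\psi''$ is negative-then-positive, so $\phi'$ is decreasing-then-increasing; a short sign chase shows $\phi>0$ on $(0,1)$ exactly when $\phi'(1)\le 0$, whereas $\phi'(1)>0$ forces $\phi<0$ just left of $1$, so $[a,\infty)\notin\cE$.

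The second step locates $G(\nu)$. I would compute $G(0)=\Gamma(3/2)\sqrt{2\beta/\sigma^2}=\sqrt{\beta\pi/2\sigma^2}$, and differentiate under the integral sign to obtain $\partial_\nu G<0$ for $\nu\le 0$, so $G$ is strictly decreasing on $(-1/2,0]$; continuity gives $G(-1/2)>1$. This yields the trichotomy at once: if $\sqrt{\beta\pi/2\sigma^2}=G(0)>1$ then $G(\nu)>1$ throughout (Case 1); if $G(0)=1$ then $G(\nu)>1$ for $\nu\in(-1/2,0)$ and $G(0)=1$ (Case 2); if $G(0)<1$ then $G$ crosses $1$ at a unique $\nu^*\in(-1/2,0)$ solving \eqref{nu*}, with $G\le 1$ on $[\nu^*,0]$ and $G>1$ on $(-1/2,\nu^*)$ (Case 3). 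Combined with the first step, this tells us exactly in which subcases every $[a,\infty)$ is an equilibrium (the ``$G(\nu)\le 1$'' subcases 2(i) and 3(i)) and in which none is (the ``$G(\nu)>1$'' cases 1, 2(ii), 3(ii)).

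The third step converts the two regimes into the stated conclusions. When every $[a,\infty)\in\cE$, I would show no optimal equilibrium exists: from the representation $J(x,[a,\infty))=\int_0^\infty e^{-s}x^{g(s,\nu)}a^{1-g(s,\nu)}\,ds$, fixing $x$ and letting $a\to\infty$ gives blow-up, because $g(0,\nu)=-2\nu<1$ makes the integrand explode near $s=0$; an optimal $\hat R\in\cE$ would then have $V(x,\hat R)=\infty$ for all $x$, contradicting $V(x_0,\hat R)=f(x_0)<\infty$ at any $x_0\in\hat R$ (and $\hat R\ne\emptyset$, since \eqref{X(infty)} gives $J(\cdot,\emptyset)\equiv 0$ for $\nu\le 0$, so $\emptyset\notin\cE$). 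When no $[a,\infty)\in\cE$, I would show $(0,\infty)$ is the only closed equilibrium, whence $R^*=(0,\infty)$ by \eqref{R*}; then Lemma~\ref{lem:bar R in E}(ii) and Remark~\ref{rem:value unchanged} give $\overline R=(0,\infty)$ and $V(\cdot,R)=V(\cdot,(0,\infty))$ for every $R\in\cE$, so $(0,\infty)$ is the unique closed optimal equilibrium, exactly as in Proposition~\ref{prop:nu<=-1/2}.

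The main obstacle is this last uniqueness claim, because for $\nu\in(-1/2,0]$ there is no analogue of Lemma~\ref{lem:form of cE} constraining the shape of closed equilibria. My plan is a structural reduction to threshold form. A closed equilibrium $R\ne(0,\infty)$ is nonempty and must be unbounded above, since otherwise $J(y,R)\le\sup R<y$ for $y>\sup R$ would put such $y$ in $S_R$. Hence $R^c$ admits no maximal gap of the form $(\ell,\infty)$, so either $R=[r,\infty)$ (impossible, as no half-line is an equilibrium here) or $R^c$ contains a bounded maximal gap $(\ell_0,r_0)$ with $0<\ell_0<r_0<\infty$ and $\ell_0,r_0\in R$. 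On such a gap the first entrance into $R$ coincides with the first exit of $(\ell_0,r_0)$, so $J(\cdot,R)=J(\cdot,(0,\ell_0]\cup[r_0,\infty))$ there, which upgrades to $(0,\ell_0]\cup[r_0,\infty)\in\cE$; Lemma~\ref{lem:two-part implies one-part} then forces $[r_0,\infty)\in\cE$, the sought contradiction. The delicate bookkeeping is verifying that a bounded gap can always be extracted and that the two expected payoffs genuinely agree on it; by contrast, the analytic heart of the argument is the equivalence ``$\psi(u)\ge u$ on $(0,1)\iff G(\nu)\le 1$'' of the first step, where the concave/convex dichotomy of Lemma~\ref{lem:kappa}(iii) is indispensable.
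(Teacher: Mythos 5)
Your proposal is correct, and most of it runs parallel to the paper's own proof. Your Step 1 (the scaling reduction $\kappa(x,a)=a\,\psi(x/a)$ and the equivalence ``$\psi\ge u$ on $(0,1)$ iff $G(\nu):=\int_0^\infty e^{-s}\big(\sqrt{\nu^2+2\beta s/\sigma^2}-\nu\big)ds\le 1$'') is exactly the paper's argument via $\lim_{x\uparrow a}\kappa_x(x,a)=\int_0^\infty e^{-s}g(s,\nu)\,ds$ combined with the concave/convex dichotomy of Lemma~\ref{lem:kappa}(iii) (the paper's ``$\kappa_{xx}$ would change sign twice'' step), and Step 2 matches the paper's computation $G(0)=\sqrt{\beta\pi/2\sigma^2}$, monotonicity of $\nu\mapsto g(s,\nu)$, and the bound at $\nu=-1/2$. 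Your regime-B reduction (nonempty, unbounded above, then either a half-line or a bounded gap yielding $(0,\ell_0]\cup[r_0,\infty)\in\cE$ and hence, by Lemma~\ref{lem:two-part implies one-part}, $[r_0,\infty)\in\cE$, a contradiction; uniqueness then as in Proposition~\ref{prop:nu<=-1/2}) is a fleshed-out version of what the paper states tersely in its Case 1. Where you genuinely diverge is the non-existence of optimal equilibria in Cases 2(i)/3(i): the paper pins down the closure of any candidate optimal equilibrium as a half-line $[r,\infty)$ through a delicate gap analysis and then beats it with $[r+\eps,\infty)$; you instead note that $J(x,[a,\infty))=\int_0^\infty e^{-s}x^{g(s,\nu)}a^{1-g(s,\nu)}ds\to\infty$ as $a\to\infty$ (because $g(0,\nu)=-2\nu<1$ for $\nu>-1/2$, so the exponent $1-g(s,\nu)$ is positive near $s=0$), whence an optimal $\hat R$ would satisfy $V(\cdot,\hat R)\equiv\infty$, impossible since $V(x_0,\hat R)=f(x_0)<\infty$ at any $x_0\in\hat R$, and $\hat R\ne\emptyset$ by \eqref{X(infty)}. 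This blow-up argument is valid, shorter than the paper's, and makes transparent the connection with the failure of \eqref{e32} highlighted in Remark~\ref{rem:no optimal E}; the paper's route is longer but yields more structural information about what a putative optimal equilibrium must look like.

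Two small repairs. First, ``continuity gives $G(-1/2)>1$'' is not an argument: what is needed (and what the paper does) is the pointwise bound $\sqrt{1/4+2\beta s/\sigma^2}+1/2>1$ for $s>0$, which integrates against $e^{-s}$ to give $G(-1/2)>1$; only then does continuity plus strict monotonicity of $G$ produce the crossing point $\nu^*\in(-1/2,0)$ in \eqref{nu*}. Second, your sign chase assumes $\psi''$ is negative-then-positive, but Lemma~\ref{lem:kappa}(iii) also allows $\psi$ strictly concave on all of $(0,1)$; that case should be mentioned, though it is immediate (a concave function vanishing at $0$ and $1$ is positive in between, and this case forces $\phi'(1)\le 0$), while the reverse implication ``$\phi'(1)>0$ and $\phi(1)=0$ imply $\phi<0$ just left of $1$'' needs no convexity input at all.
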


\begin{proof}
For each $a>0$, recall the functions $\kappa(x,a)$ in \eqref{kappa} and $g(s,\nu)$ in \eqref{g(s,nu)}. A direct calculation shows that $\nu\mapsto g(s,\nu)$ is strictly decreasing. Now, for any $\nu \le 0$, by \eqref{kappa_x} and $g(s,\nu)$ being strictly decreasing in $\nu$, 
\begin{align}\label{slope at a}
\lim_{x\uparrow a}\kappa_x(x,a) &= \int_0^\infty e^{-s} g(s,\nu)ds\notag \\
 &\ge  \int_0^\infty e^{-s} g(s,0)ds = \int_0^\infty e^{-s} \sqrt{\frac{2\beta s}{\sigma^2}}ds = \sqrt{\frac{2\beta}{\sigma^2}} \Gamma(3/2) = \sqrt{\frac{\beta\pi}{2\sigma^2}}.
\end{align}
Note that in the above relation we have strict inequality for $\nu<0$, and equality for $\nu=0$. 
\begin{itemize}[leftmargin=*]
\item {\bf Case 1:} $\sqrt{\beta\pi/2\sigma^2}>1$. For each $a>0$, by \eqref{slope at a}, $\lim_{x\uparrow a}\kappa_x(x,a)>1$ for all $\nu\le 0$. This, together with $\kappa(a,a)=a$ (Lemma~\ref{lem:kappa} (i)), implies that there exists $x^*\in (0,a)$ such that $x> \kappa(x,a)= J(x,[a,\infty))$ for all $x\in(x^*,a)$, and thus $(x^*,a)\subseteq S_{[a,\infty)}$. Hence, we conclude that $[a,\infty)\notin\cE$, for all $a>0$. This, together with Lemma~\ref{lem:two-part implies one-part}, shows that $(0,\ell]\cup [r,\infty)\notin\cE$ for all $0<\ell<r<\infty$. It follows that $(0,\infty)$ is the {\it only} closed equilibrium. We can then argue as in Proposition~\ref{prop:nu<=-1/2} to show that $R^*= (0,\infty)$ is the unique closed optimal equilibrium.  

\item {\bf Case 2:} $\sqrt{\beta\pi/2\sigma^2}=1$. For $\nu< 0$, \eqref{slope at a} gives $\lim_{x\uparrow a}\kappa_x(x,a)>1$, for any $a>0$. The same argument as in Case 1 shows that $(0,\infty)$ is the only closed equilibrium, and $R^*= (0,\infty)$ is the unique closed optimal equilibrium. 

For $\nu= 0$, \eqref{slope at a} gives $\lim_{x\uparrow a}\kappa_x(x,a)=1$, for any $a>0$. This, together with Lemma~\ref{lem:kappa}, already implies that $J(x,[a,\infty))=\kappa(x,a)>x$ for all $x\in (0,a)$. Indeed, if $\kappa(x',a) \le x'$ for some $x'\in(0,a)$, then $\lim_{x\to 0}\kappa_x(x,a)=\infty$ (Lemma~\ref{lem:kappa} (iii)), $\kappa(a,a)=a$ (Lemma~\ref{lem:kappa} (i)), and $\lim_{x\uparrow a}\kappa_x(x,a)=1$ would force $x\mapsto \kappa_{xx}(x,a)$ to change sign at least twice on $(0,a)$, which contradicts Lemma~\ref{lem:kappa} (iii). Hence, we have $(0,a)\subseteq C_{[a,\infty)}$, and thus $[a,\infty)\in\cE$, for all $a>0$. 

By contradiction, suppose there exists an optimal equilibrium $R\in\cE$. By Lemma~\ref{lem:bar R in E} and Remark~\ref{rem:value unchanged}, its closure $\overline R$ is again an optimal equilibrium. Observe that $\overline R$ cannot be either $\emptyset$ or $(0,\infty)$. Indeed, \eqref{X(infty)} yields $J(x,\emptyset) = 0<x$ for all $x>0$, which implies $\emptyset$ is not even an equilibrium. Also, for any $a>0$, $J(x,(0,\infty)) =x < J(x,[a,\infty))$ for all $x\in(0,a)$; that is, $[a,\infty)\in\cE$ generates larger values than $(0,\infty)$ on $(0,a)$, and thus $(0,\infty)$ cannot be an optimal equilibrium. 
With $\overline R\neq(0,\infty)$, we can take $x>0$ such that $x\notin\overline R$. Define $\ell$ and $r$ as in \eqref{bounds}, with $R$ replaced by $\overline R$. By the closedness of $\overline R$, we have $\ell<x$ and $r >x$. We claim that $r<\infty$ and $\ell=0$. If $r=\infty$, then $\ell>0$ must hold, otherwise $\overline R=\emptyset$. It follows that $\sup \overline R=\ell$ and $J(x,\overline R)= J(x,(0,\ell]) < \ell < x$ for all $x>\ell$, where the first inequality is deduced from \eqref{J GBM}. This, however, shows that $\overline R$ is not an equilibrium, a contradiction. Thus, we must have $r<\infty$. Now, if $\ell>0$, then $\ell \in \overline R$, and thus $V(\ell,\overline R) = \ell < J(\ell,[r,\infty))$, where the equality follows from Lemma~\ref{lem:rho R = rho bar R}. This contradicts $\overline R$ being an optimal equilibrium. Therefore, the claim that $r<\infty$ and $\ell=0$ follows, and we in particular have $\inf \overline R=r>0$. If there exists $x'>r$ such that $x'\notin \overline R$, the same argument above shows that $\inf\{y<x' : y\in \overline R\} = 0$, which contradicts $\inf \overline R=r>0$. Hence, we conclude that $\overline R = [r,\infty)$. However, for any $\eps>0$, $J(x,[r,\infty))=x < J(x,[r+\eps,\infty))$ on $(r,r+\eps)$. That is, $[r+\eps,\infty)\in\cE$ generates larger values than $\overline R=[r,\infty)$ on $(r,r+\eps)$, a contradiction to $\overline R$ being an optimal equilibrium.


\item {\bf Case 3:} $\sqrt{\beta\pi/2\sigma^2}<1$. Since $\nu\mapsto g(s,\nu)$ is strictly decreasing,
\begin{equation}\label{int > int}
\int_0^\infty e^{-s} g(s,-1/2)ds > \int_0^\infty e^{-s} g(s,0)ds. 
\end{equation}
Note that the left hand side of \eqref{int > int} is strictly larger than $1$, while the right hand side is strictly less than $1$. Indeed,
by definition, $g(s,-1/2) > 1/2+1/2=1$ for all $s>0$. It follows that $\int_0^\infty e^{-s} g(s,-1/2)ds > \int_0^\infty e^{-s} ds=1$. On the other hand, as shown in \eqref{slope at a}, $\int_0^\infty e^{-s} g(s,0)ds= \sqrt{\beta\pi/2\sigma^2}<1$. Since $\nu\mapsto g(s,\nu)$ is continuous, we conclude from \eqref{int > int} that there exists $\nu^*\in (-1/2,0)$ such that $\int_0^\infty e^{-s} g(s,\nu^*)ds =1$, i.e. \eqref{nu*} holds. 

For $\nu\in[v^*,0]$,  since $\nu\mapsto g(s,\nu)$ is strictly decreasing, $\lim_{x\uparrow a}\kappa_x(x,a) = \int_0^\infty e^{-s} g(s,\nu)ds \le 1$, for any $a>0$. This entails $J(x,[a,\infty))=\kappa(x,a)>x$ for all $x\in (0,a)$. If not, one may argue as in Case 2 that $\lim_{x\to 0}\kappa_x(x,a)=\infty$ (Lemma~\ref{lem:kappa} (iii)), $\kappa(a,a)=a$ (Lemma~\ref{lem:kappa} (i)), and $\lim_{x\uparrow a}\kappa_x(x,a)\le 1$ would force $x\mapsto\kappa_{xx}(x,a)$ to change sign at least twice on $(0,a)$, which contradicts Lemma~\ref{lem:kappa} (iii). Hence, we conclude $(0,a)\subseteq C_{[a,\infty)}$, and thus $[a,\infty)\in\cE$, for all $a>0$. This implies, as shown in Case 2, that there exists no optimal equilibrium.  

For $\nu\in(-1/2,v^*)$, since $\nu\mapsto g(s,\nu)$ is strictly decreasing, $\lim_{x\uparrow a}\kappa_x(x,a) = \int_0^\infty e^{-s} g(s,\nu)ds > 1$, for any $a>0$. One may then argue as in Case 1 to show that $(0,\infty)$ is the only closed equilibrium, and $R^*=(0,\infty)$ is the unique closed optimal equilibrium.
\end{itemize}
\end{proof}

\begin{remark}\label{rem:no optimal E}
For $\nu\in(-1/2,0]$, note from Remark~\ref{rem:conditions not satisfied} that while \eqref{e34} is satisfied, \eqref{e32} is not. Hence, Proposition~\ref{prop:nu in between} in particular shows that an optimal equilibrium may fail to exist when \eqref{e32} is violated. One can also view this from $R^*$ in \eqref{R*}.  In Case 2 (i) and Case 3 (i) in Proposition~\ref{prop:nu in between},
\[
R^* \subseteq \bigcap_{a>0} [a,\infty) = \emptyset, 
\] 
and thus $R^*=\emptyset$. However, $R^*$ is not even an equilibrium, as $J(x,\emptyset) = 0 < x$ for all $x>0$, thanks to \eqref{e34}. In other words, the validity of Theorem~\ref{t1} hinges on \eqref{e32}. 
\end{remark}

In this subsection, we show that optimal equilibria vary in forms and may fail to exist, depending on the upward potential of the geometric Brownian motion $X$. Recall that $\nu$ in \eqref{nu} measures the upward potential of $X$. For $\nu>0$, $X$ admits strongest upward potential (recall \eqref{X(infty)}). Anticipating that all future selves will choose to continue (i.e. follow the stopping policy $\emptyset$), which leads to the payoff $J(x,\emptyset) = \infty$, the current self is more than happy to continue, too. This makes $\emptyset$ (i.e. ``never stop'') an optimal equilibrium, as shown in Proposition~\ref{prop:nu>0}. For $\nu\le-1/2$, $X$ admits strongest downward potential, such that the expected payoff from any slight continuation is lower than the immediate payoff from stopping. The stopping policy $(0,\infty)$ (i.e. ``never start'') is then the only (closed) equilibrium, as established in Proposition~\ref{prop:nu<=-1/2}. 

The most intriguing case is $\nu\in(-1/2,0]$, where one needs to compare closely the mediocre upward potential of $X$ with the magnitude of discounting, measured by $\beta>0$. If discounting is severe enough relative to the volatility of $X$ (i.e. $\sqrt{\beta\pi/2\sigma^2}>1$), discounting outweighs the upward potential of $X$, such that the expected payoff from any slight continuation is lower than the immediate payoff from stopping (similar to the case $\nu\le -1/2$). The stopping policy $(0,\infty)$ (i.e. ``never start'') is then the only (closed) equilibrium, as shown in Case 1 of Proposition~\ref{prop:nu in between}. On the other hand, if discounting is not so severe relative to the volatility of $X$ (i.e. $\sqrt{\beta\pi/2\sigma^2}\le 1$), a critical level $\nu^*\in (-1/2,0]$ comes into play, as shown in Cases 2 and 3 of Proposition~\ref{prop:nu in between}. For $\nu < \nu^*$, the upward potential of $X$ is not strong enough to outweigh discounting (a similar situation as above), and $(0,\infty)$ (i.e. ``never start'') is the only (closed) equilibrium. For $\nu \ge \nu^*$, the upward potential of $X$ outweighs discounting, to the extent that continuation until any threshold $a>0$ is better than immediate stopping -- such that $[a,\infty)$ is an equilibrium for all $a>0$. Since the discount function \eqref{hyperbolic} induces decreasing impatience, the agent prefers smaller equilibria over larger ones; see Corollary~\ref{coro:smaller better} and the detailed discussion below it. This means $[a',\infty)$ is preferred over $[a,\infty)$, for all $0<a<a'$. Interestingly, despite this well-defined ``order relation'' among equilibria, there exists no optimal equilibrium. Indeed, any equilibrium $[a,\infty)$ is outperformed by another equilibrium $[a',\infty)$ with $a'>a$, and the only hopeful candidate $\bigcap_{a>0} [a,\infty) = \emptyset$ is not even an equilibrium, as pointed out in Remark~\ref{rem:no optimal E}.


\subsection{An American Put Option on a Geometric Brownian Motion} \label{subsec:put on GBM}
Consider a geometric Brownian motion $X$ given by \eqref{GBM}, with state space $\X=(0,\infty)$, as well as the payoff function $f(x):= (K-x)^+$ on $\X$, for some $K>0$, and the hyperbolic discount function specified in \eqref{hyperbolic}.  The expected payoff $J(x,R)$ in \eqref{J} then takes the form 
\begin{equation}\label{J put on GBM}
J(x,R) = \E^x\left[\frac{(K-X_{\rho(x,R)})^+}{1+\beta\rho(x,R)}\right]. 
\end{equation}
The current setting relates to pricing of a perpetual American put option, under hyperbolic discounting. Alternatively, it can be viewed as a real options problem where the management of a company considers an investment plan, which has constant payoff $K$ and stochastic cost evolving as $X$, and would like to decide when to carry it out. 

As mentioned in Section~\ref{subsec:GBM}, $X$ satisfies Assumption~\ref{a1}, thanks to Remark~\ref{rem:Assumption satisfied}. Also, since $f$ is bounded, \eqref{e34} and \eqref{e32} trivially holds. Thus,  Proposition~\ref{p2}, Theorem~\ref{t1}, and Theorem~\ref{t2} can all be applied to this stopping problem.


For $\mu\ge 0$, a clear-cut characterization of closed equilibria can be obtained. 

\begin{lemma}\label{lem:lambda>=0}
Let $\mu\ge 0$. For any $R\in\cE$ that is closed and contained in $(0,K]$, $R = (0,a]$ for some $a\in(0,K]$.
\end{lemma}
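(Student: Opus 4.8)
The plan is to fix a closed equilibrium $R\subseteq(0,K]$ and show directly that $R=(0,a]$ with $a:=\sup R$. First I would record the basic facts. Since $J(x,\emptyset)=\limsup_{t\to\infty}\delta(t)(K-X_t)^+=0<(K-x)^+=f(x)$ for every $x\in(0,K)$, we have $\emptyset\notin\cE$, so $R\neq\emptyset$; being closed, bounded, and contained in $(0,K]$, the supremum $a=\sup R$ then lies in $R$ and satisfies $a\in(0,K]$. Because $R\in\cE$ and $X$ satisfies Assumption~\ref{a1}, Lemma~\ref{lem:bar R in E}(i) gives $S_R=\emptyset$, i.e. $f(y)\le J(y,R)$ for all $y\notin R$. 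The entire proof then reduces to showing that no point of $(0,a)$ lies outside $R$: once $(0,a)\subseteq R$ and $a\in R\subseteq(0,a]$, we conclude $R=(0,a]$.

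Toward a contradiction I would take $x\in(0,a)\setminus R$ and split into two cases according to whether $R$ has points below $x$. In \textbf{Case A}, $R\cap(0,x)=\emptyset$, so $R\subseteq(x,K]$; setting $r:=\inf\{y\in R:y>x\}$ (well defined since $x<a=\sup R$), continuity forces $X_{\rho(x,R)}=r$ on $\{\rho(x,R)<\infty\}$, with $r\in R$ and $x<r\le a\le K$. As $r\le K$ the stopping payoff is $(K-r)^+=K-r<K-x$, and $\delta\le1$ gives $J(x,R)\le(K-r)\,\P^x(\rho(x,R)<\infty)\le K-r<K-x=f(x)$, so $x\in S_R$, contradicting $S_R=\emptyset$. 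In \textbf{Case B}, $R\cap(0,x)\neq\emptyset$, so with $\ell:=\sup\{y\in R:y<x\}$ and $r:=\inf\{y\in R:y>x\}$ we obtain a genuine gap $0<\ell<x<r\le K$ with $\ell,r\in R$ and $(\ell,r)\cap R=\emptyset$. Here $\rho(x,R)$ equals the first exit time $\tau$ of $X$ from $(\ell,r)$, which is finite $\P^x$-a.s. (a geometric Brownian motion exits a bounded subinterval of $(0,\infty)$ in finite time), and $X_\tau\in\{\ell,r\}\subseteq(0,K]$.

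For the gap case I would exploit $\mu\ge0$: then $X$ is a submartingale, so optional sampling together with the boundedness of $X_{\tau\wedge t}$ on $[\ell,r]$ yields $\E^x[X_\tau]\ge x$. Since $X_\tau\le K$ we have $(K-X_\tau)^+=K-X_\tau\ge0$, whence
\[
J(x,R)=\E^x\!\left[\frac{K-X_\tau}{1+\beta\tau}\right]\le\E^x[K-X_\tau]=K-\E^x[X_\tau]\le K-x=f(x).
\]
To upgrade this to a strict inequality I would use that $\tau>0$ $\P^x$-a.s. (so $1/(1+\beta\tau)<1$ a.s.) and that $\{X_\tau<K\}$ has positive probability: indeed $\ell<K$, and by regularity of $X$ in the sense of \eqref{regular} the process exits at $\ell$ with positive probability, so $\P^x(X_\tau=\ell)>0$. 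On $\{X_\tau<K\}$ the integrand is strictly below $K-X_\tau$, giving $J(x,R)<K-x=f(x)$, i.e. $x\in S_R$, again contradicting $S_R=\emptyset$. Both cases being impossible, $(0,a)\subseteq R$, and therefore $R=(0,a]$.

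I expect the main obstacle to be the strictness in Case B. The weak bound $J(x,R)\le f(x)$ follows immediately from $\mu\ge0$ and $\delta\le1$, but ruling out equality is essential: equality would merely place $x$ in the indifference region $I_R$, which is permitted for an equilibrium and would not yield a contradiction. Excluding it requires both the a.s. positivity of $\tau$ and the positivity of $\P^x(X_\tau=\ell)$. The supporting facts—$\tau<\infty$ a.s. and the two exit points each being reached with positive probability—are standard for geometric Brownian motion but must be invoked carefully, and they are exactly where the regularity of $X$ enters.
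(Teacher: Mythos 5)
Your proposal is correct and follows essentially the same route as the paper: assume a gap point $x\in(0,\sup R)\setminus R$ and show $x\in S_R$ (contradicting $R\in\cE$) by combining the submartingale property of $X$ under $\mu\ge 0$ with the fact that strictly positive discounting strictly reduces the expected payoff. The difference is one of care rather than method: the paper treats your Cases A and B uniformly (taking $\ell=0$ when $R$ has no points below $x$) and asserts the strict inequality $J(x,R)<\E^x\left[(K-X_{\rho(x,R)})^+\right]$ without comment, whereas you isolate Case A (where the elementary bound $J(x,R)\le K-r<K-x$ suffices, sidestepping possible non-finiteness of $\rho(x,R)$) and justify strictness in Case B via $\tau>0$ $\P^x$-a.s.\ together with $\P^x(X_\tau=\ell)>0$ --- edge-case details the paper leaves implicit.
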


\begin{proof}
For any $R\in\cE$ that is closed and contained in $(0,K]$, set $a := \sup R\le K$. By contradiction, suppose there exists $x\in (0,a)$ such that $x\notin R$. Define $\ell$ and $r$ as in \eqref{bounds}, with $\ell$ taken to be $0$ if there exists no $y\in R$ with $y<x$.  
By the closedness of $R$, we have $\ell <x$ and $r>x$, and thus $\rho(x,R)>0$ $\P^x$-a.s. This implies
\[
J(x,R) < \E^x\left[(K-X_{\rho(x,R)})^+\right] = \E^x\left[K-X_{\rho(x,R)}\right]= K-\E^x[X_{\rho(x,R)}]. 
\]
With $\mu\ge 0$, $X$ is a submartingale and thus $\E^x[X_{\rho(x,R)}]\ge x$. The previous inequality then yields $J(x,R)< K-x= f(x)$, i.e. $x\in S_{R}$. This contradicts the fact that $R$ is an equilibrium. 
\end{proof}

To understand the behavior of $J(x,R)$, given in \eqref{J put on GBM}, we need the next technical lemma. Recall the constant $\nu\in\R$ in \eqref{nu}, and define 
\begin{equation}\label{lambda}
\lambda:=   \int_0^\infty e^{-s} \left( \sqrt{\nu^2+{2\beta s}/{\sigma^2}}+\nu \right)ds>0.
\end{equation}

\begin{lemma}\label{lem:eta}
For any $a\in(0,K)$, define
\[
\eta(x,a) := \E^x\left[\frac{K-a}{1+\beta T^x_a}\right]\quad \hbox{for}\ \ a\le x<\infty,
\]
where $T^x_a$ is defined as in \eqref{T^x_y}. 
Then, 
\begin{itemize}
\item [(i)] $x\mapsto\eta(x,a)$ is strictly decreasing and strictly convex on $(a,\infty)$, with $\eta(a,a)=K-a$ and $\lim_{x\to\infty}\eta(x,a)=0$. 
\item [(ii)] If $a< \big(\frac{\lambda}{1+\lambda}\big) K$, with $\lambda>0$ as in \eqref{lambda},
then $x\mapsto\eta(x,a)$ and $x\mapsto (K-x)^+$ intersect exactly once on $(a,\infty)$, and the intersection takes place at some $x^*\in(a,K)$. Moreover, $\eta(x,a)< (K-x)^+$ on $(a,x^*)$ and $\eta(x,a)> (K-x)^+$ on $(x^*,\infty)$.
\item [(iii)] If $a\ge \big(\frac{\lambda}{1+\lambda}\big) K$, with $\lambda>0$ as in \eqref{lambda}, then $\eta(x,a)>(K-x)^+$ on $(a,\infty)$. 
\end{itemize}
\end{lemma}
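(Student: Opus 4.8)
The plan is to mirror the Laplace-transform treatment of $\kappa(x,a)$ in Lemma~\ref{lem:kappa}, but now for first passage \emph{from above}. Using $\frac{1}{1+\beta t}=\int_0^\infty e^{-(1+\beta t)s}\,ds$ and Tonelli's theorem,
\[
\eta(x,a)=(K-a)\int_0^\infty e^{-s}\,\E^x\!\left[e^{-\beta s T^x_a}\right]ds.
\]
For the geometric Brownian motion \eqref{GBM} with $x\ge a$, the first-passage Laplace transform to the lower level $a$ is, by the relevant formula in Borodin and Salminen \cite{BS-book-2002}, $\E^x[e^{-\beta s T^x_a}]=(a/x)^{\sqrt{\nu^2+2\beta s/\sigma^2}+\nu}$, with $\nu$ as in \eqref{nu}. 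Setting $\tilde g(s,\nu):=\sqrt{\nu^2+2\beta s/\sigma^2}+\nu$, which is strictly positive for $s>0$, this gives
\[
\eta(x,a)=(K-a)\int_0^\infty e^{-s}\left(\frac{a}{x}\right)^{\tilde g(s,\nu)}ds,
\]
so that the constant $\lambda=\int_0^\infty e^{-s}\tilde g(s,\nu)\,ds$ in \eqref{lambda} is exactly the $e^{-s}$-weighted average of the exponents. Note the $+\nu$ here, in contrast to the $-\nu$ appearing in Lemma~\ref{lem:kappa} for passage from below.

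For part (i), I would differentiate under the integral sign (justified by dominated convergence, uniformly on compact subsets of $(a,\infty)$, since $(a/x)^{\tilde g}$ and its $x$-derivatives decay like powers of $a/x<1$), obtaining
\[
\eta_x(x,a)=-(K-a)\int_0^\infty e^{-s}\frac{\tilde g(s,\nu)}{x}\left(\frac{a}{x}\right)^{\tilde g(s,\nu)}ds<0,\qquad
\eta_{xx}(x,a)=(K-a)\int_0^\infty e^{-s}\frac{\tilde g(s,\nu)\big(\tilde g(s,\nu)+1\big)}{x^2}\left(\frac{a}{x}\right)^{\tilde g(s,\nu)}ds>0,
\]
which yields strict monotonicity and strict convexity on $(a,\infty)$. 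The boundary values $\eta(a,a)=K-a$ and $\lim_{x\to\infty}\eta(x,a)=0$ follow by direct evaluation and dominated convergence.

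The heart of parts (ii)--(iii) is the threshold $\frac{\lambda}{1+\lambda}K$, which I would identify through a slope comparison at $x=a$. From the formula for $\eta_x$, the right-hand slope there is $\eta_x(a,a)=-\frac{(K-a)\lambda}{a}$, whereas $(K-x)^+$ has slope $-1$ near $a$ (as $a<K$). Put $\phi(x):=\eta(x,a)-(K-x)^+$, so $\phi(a)=0$ and $\phi'(a)=1-\frac{(K-a)\lambda}{a}$, whose sign is negative exactly when $a<\frac{\lambda}{1+\lambda}K$. On $(a,K)$, $\phi=\eta-(K-x)$ is strictly convex (since $\eta$ is), while on $[K,\infty)$, $\phi=\eta>0$. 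In case (iii), $\phi'(a)\ge 0$ together with strict convexity forces $\phi$ strictly increasing on $(a,K)$, hence $\phi>0$ there, and combined with $\phi=\eta>0$ on $[K,\infty)$ this gives $\eta>(K-x)^+$ on all of $(a,\infty)$. In case (ii), $\phi'(a)<0$ makes $\phi$ dip strictly below $0$ just past $a$; since $\phi(K^-)=\eta(K,a)>0$ and $\phi$ is strictly convex on $(a,K)$, it crosses zero exactly once at some $x^*\in(a,K)$, with $\phi<0$ on $(a,x^*)$ and $\phi>0$ on $(x^*,K)$, while $\phi=\eta>0$ on $[K,\infty)$ precludes any further crossing, yielding the claimed single intersection and sign pattern.

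The main obstacle I anticipate is twofold. First, pinning down the correct first-passage Laplace transform for hitting from above---in particular the $+\nu$ sign and the positivity of $\tilde g(s,\nu)$ for all $s>0$, even when $\nu<0$---and confirming it against Borodin and Salminen. Second, the care needed at the kink of $(K-x)^+$ at $x=K$, where $\phi$ loses global convexity; this turns out benign, however, because $\phi$ reduces to the strictly positive $\eta$ on $[K,\infty)$, so the ``exactly once'' conclusion is governed entirely by the strictly convex piece on $(a,K)$.
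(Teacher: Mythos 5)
Your proposal is correct and follows essentially the same route as the paper's proof: the Laplace-transform representation $\eta(x,a)=(K-a)\int_0^\infty e^{-s}(a/x)^{\sqrt{\nu^2+2\beta s/\sigma^2}+\nu}\,ds$ via Borodin--Salminen, differentiation under the integral to get strict monotonicity and convexity, and the slope comparison $\lim_{x\downarrow a}\eta_x(x,a)=-\frac{(K-a)\lambda}{a}$ versus $-1$ to identify the threshold $\frac{\lambda}{1+\lambda}K$. Your explicit handling of the kink of $(K-x)^+$ at $x=K$ through the function $\phi$ is just a more detailed writeup of the convexity argument the paper states tersely, so there is nothing to add.
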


\begin{proof}
By definition, $\eta(a,a)=K-a$. Let $p(t)$ denote the density function of $T^x_a$. Observe that
\begin{align*}
\eta(x,a) &= (K-a) \int_0^\infty \frac{ p(t) }{1+\beta t}dt= (K-a) \int_0^\infty \int_0^\infty e^{-(1+\beta t)s} p(t) ds dt\\
&= (K-a) \int_0^\infty e^{-s} \E^x\left[e^{-\beta s T^x_{a}}\right] ds=(K-a) \int_0^\infty e^{-s} \left(\frac{a}{x}\right)^{\sqrt{\nu^2+\frac{2\beta s}{\sigma^2}}+\nu} ds,\quad \forall x>a.
\end{align*}
where the last equality follows from \cite[Formula 2.0.1 on p. 628]{BS-book-2002}. This particularly shows that $\lim_{x\to\infty}\eta(x,a)=0$. Moreover, for any $x>a$,
\begin{align*}
\eta_x(x,a) &= -\frac{K-a}{x} \int_0^\infty e^{-s} \left( \sqrt{\nu^2+\frac{2\beta s}{\sigma^2}}+\nu \right)\left(\frac{a}{x}\right)^{\sqrt{\nu^2+\frac{2\beta s}{\sigma^2}}+\nu} ds\ <0,\\
\eta_{xx}(x,a) &= \frac{K-a}{x^2} \int_0^\infty e^{-s} \left( \sqrt{\nu^2+\frac{2\beta s}{\sigma^2}}+\nu \right) \left( \sqrt{\nu^2+\frac{2\beta s}{\sigma^2}}+\nu+1 \right)\left(\frac{a}{x}\right)^{\sqrt{\nu^2+\frac{2\beta s}{\sigma^2}}+\nu} ds\ >0. 
\end{align*}
Thus, $x\mapsto\eta(x,a)$ is strictly decreasing and strictly convex on $(a,\infty)$. This already implies that $x\mapsto\eta(x,a)$ and $x\mapsto (K-x)^+$ intersect at most once on $(a,\infty)$, and the intersection, if exists, must happen on $(a,K)$. Observe that the intersection $x^*\in (a,K)$ exists if and only if $\lim_{x\downarrow a}\eta_x(x,a) <-1$. Since
\[
\lim_{x\downarrow a}\eta_x(x,a) = -\frac{K-a}{a} \int_0^\infty e^{-s} \left( \sqrt{\nu^2+\frac{2\beta s}{\sigma^2}}+\nu \right)ds = -\frac{K-a}{a}\lambda,
\]
$\lim_{x\downarrow a}\eta_x(x,a) <-1$ if and only if $a< \big(\frac{\lambda}{1+\lambda}\big) K$. Thus, for $a< \big(\frac{\lambda}{1+\lambda}\big) K$, the intersection $x^*\in (a,K)$ exists, and we have $\eta(x,a)< (K-x)^+$ on $(a,x^*)$ and $\eta(x,a)> (K-x)^+$ on $(x^*,\infty)$. For $a\ge \big(\frac{\lambda}{1+\lambda}\big) K$, $\eta(x,a)$ is always above $(K-x)^+$ on $(a,\infty)$.
\end{proof}

The above lemma immediately enhances the characterization in Lemma~\ref{lem:lambda>=0}. 

\begin{corollary}\label{coro:equilibrium condition}
For any $a\in (0,K]$, $R = (0,a]\in \cE$ if and only if $a\ge \big(\frac{\lambda}{1+\lambda}\big) K$, with $\lambda>0$ defined in \eqref{lambda}. 
\end{corollary}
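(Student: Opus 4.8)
The plan is to read off the equilibrium condition directly from the structure established earlier and then invoke \lemref{lem:eta}. First I would recall that, since $X$ satisfies \asref{a1}, \remref{rem:R subset Theta R} gives $\Theta(R)=S_R\cup R$ for every $R\in\B(\X)$, and that every $x\in R$ lies in $I_R$ (because $\rho(x,R)=0$ $\P^x$-a.s., so $J(x,R)=f(x)$). Consequently $S_R\cap R=\emptyset$, and since $\Theta(R)=S_R\cup R$, the identity $\Theta(R)=R$ holds if and only if $S_R=\emptyset$. For $R=(0,a]$ this reduces the problem to checking whether $f(x)\le J(x,R)$ for every $x>a$.

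Next I would compute $J(x,(0,a])$ for $x>a$. Since $X$ is continuous and starts above $a$, its first entry into $(0,a]$ occurs exactly at the boundary point $a$, so $\rho(x,(0,a])=T^x_a$ and $X_{\rho(x,(0,a])}=a$. Hence, for $a<K$,
\[
J(x,(0,a]) = \E^x\left[\frac{(K-a)^+}{1+\beta T^x_a}\right] = \E^x\left[\frac{K-a}{1+\beta T^x_a}\right] = \eta(x,a),
\]
the function analyzed in \lemref{lem:eta}. Thus the equilibrium condition becomes $(K-x)^+\le \eta(x,a)$ for all $x>a$. Since $f(x)=0$ for $x\ge K$ while $\eta(x,a)>0$ there by \lemref{lem:eta}(i), the condition can only fail on $(a,K)$.

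Finally I would split into the two regimes supplied by \lemref{lem:eta}. If $a<\big(\frac{\lambda}{1+\lambda}\big)K$, part (ii) produces $x^*\in(a,K)$ with $\eta(x,a)<(K-x)^+$ on $(a,x^*)$; hence $f>J(\cdot,R)$ there, so $(a,x^*)\subseteq S_R\neq\emptyset$ and $R$ is not an equilibrium. If $a\ge\big(\frac{\lambda}{1+\lambda}\big)K$ with $a<K$, part (iii) gives $\eta(x,a)>(K-x)^+$ on $(a,\infty)$, so $f<J(\cdot,R)$ for every $x>a$, i.e. $S_R=\emptyset$ and $R\in\cE$. The boundary value $a=K$, which satisfies $a\ge\big(\frac{\lambda}{1+\lambda}\big)K$ since $\lambda>0$, I would treat directly: there $J(x,(0,K])=0=f(x)$ for all $x>K$, so again $S_R=\emptyset$ and $R\in\cE$.

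I do not expect a genuine obstacle here, as the analytic heavy lifting is already in \lemref{lem:eta}; the only points requiring a little care are the identification $\rho(x,(0,a])=T^x_a$ via the continuity of $X$, so that the put pays off exactly $K-a$ at the stopping level, and the separate treatment of the degenerate case $a=K$, where $\eta$ is undefined.
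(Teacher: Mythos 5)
Your proof is correct and takes essentially the same route as the paper: reduce membership in $\cE$ to the condition $f(x)\le J(x,(0,a])$ for $x>a$, identify $J(x,(0,a])=\eta(x,a)$ via $\rho(x,(0,a])=T^x_a$, and then invoke \lemref{lem:eta} (ii)--(iii). The only difference is that you fill in details the paper's two-line proof leaves implicit, notably the separate treatment of $a=K$ (where $\eta(\cdot,a)$ is not defined) and the observation that the inequality can only fail on $(a,K)$.
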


\begin{proof}
For any $a\in (0,K]$, $R = (0,a]\in \cE$ if and only if $J(x,(0,a])= \eta(x,a)\ge (K-a)^+$ for all $x>a$. By Lemma~\ref{lem:eta}, this holds if and only if $a\ge \big(\frac{\lambda}{1+\lambda}\big) K$. 
\end{proof}

For $\mu<0$, specific forms of closed equilibria as in Lemma~\ref{lem:lambda>=0} cannot be obtained. Instead, we show that every closed equilibrium must contain a common subset. 

\begin{lemma}\label{lem:lambda<0}
Let $\mu< 0$. For any $R\in\cE$ that is closed and contained in $(0,K]$, we have 
$
\big(0,\big(\frac{\lambda}{1+\lambda}\big) K\big] \subseteq R,
$
with $\lambda>0$ defined in \eqref{lambda}.
\end{lemma}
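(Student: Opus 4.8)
The plan is to argue by contradiction. Since $R\in\cE$, every $x\notin R$ satisfies $f(x)\le J(x,R)$ (because $\Theta(R)=S_R\cup R$ forces $x\notin S_R$), so it suffices to exhibit a single point $x\notin R$ with $f(x)>J(x,R)$. Write $a^*:=\frac{\lambda}{1+\lambda}K$ and suppose $(0,a^*]\not\subseteq R$. Fix $x_0\in(0,a^*]\setminus R$; as $R$ is closed, $x_0$ lies in a maximal gap $(\ell,r)$ of $R^c$, where $\ell:=\sup\{y\in R:y<x_0\}$ and $r:=\inf\{y\in R:y>x_0\}$ (set $\ell=0$ or $r=\infty$ when the relevant set is empty), with $\ell,r\in R$ whenever finite and positive.

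First I would reduce the configuration. I claim $\inf R=0$: otherwise, taking $x_0\in(0,\inf R)$, the process reaches $R$ only by hitting $\inf R$ from below, so by \eqref{J put on GBM}, $J(x_0,R)=(K-\inf R)^+\,\E^{x_0}[1/(1+\beta T^{x_0}_{\inf R})]<(K-\inf R)^+\le K-x_0=f(x_0)$, a contradiction. Thus $\inf R=0$, which gives $\ell>0$. Since $f$ is bounded, \eqref{e34} and \eqref{e32} hold and $(0,a^*]\in\cE$ is closed (\corref{coro:equilibrium condition}), so by \propref{p2} the set $R\cap(0,a^*]$ is a closed equilibrium. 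Its supremum must equal $a^*$: if $\sup(R\cap(0,a^*])=b<a^*$, then $J(\cdot,R\cap(0,a^*])=\eta(\cdot,b)$ on the gap $(b,\infty)$, which by \lemref{lem:eta}(ii) falls strictly below $f$ on some $(b,x^*)$ — impossible for an equilibrium. Hence $a^*\in R$, and since $x_0<a^*$ this forces $r\le a^*$. We are therefore reduced to a bounded gap $0<\ell<x_0<r\le a^*<K$ with $\ell,r\in R$.

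On $(\ell,r)$ we have $J(x,R)=\E^x[(K-X_\tau)/(1+\beta\tau)]$, where $\tau$ is the first exit time of $(\ell,r)$ and $X_\tau\in\{\ell,r\}$; this depends only on $\ell,r$. With $u(x):=J(x,R)-(K-x)$ one has $u(\ell)=u(r)=0$, so it is enough to find $x\in(\ell,r)$ with $u(x)<0$. I would obtain this through the boundary slope at $\ell$. Expanding $1/(1+\beta\tau)=\int_0^\infty e^{-s}e^{-\beta s\tau}\,ds$ and inserting the explicit two-sided exit transforms for geometric Brownian motion (\cite[Formula 3.0.5]{BS-book-2002}, as in \lemref{lem:two-part implies one-part}) gives
\begin{equation*}
J_x(\ell^+)=\lim_{x\downarrow\ell}\eta_x(x,\ell)+\int_0^\infty e^{-s}L'(\ell,s)\big[(K-r)-(K-\ell)M(s)\big]\,ds,
\end{equation*}
where $L(\cdot,s)$ is the Laplace transform of the exit through $r$ and $M(s)=\E^r[e^{-\beta sT^r_\ell}]=(\ell/r)^{\sqrt{\nu^2+2\beta s/\sigma^2}+\nu}$. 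Because $\ell<a^*$, \lemref{lem:eta} yields $\lim_{x\downarrow\ell}\eta_x(x,\ell)=-\frac{K-\ell}{\ell}\lambda<-1$; if the correction integral can be bounded by the slack $\frac{K-\ell}{\ell}\lambda-1>0$, then $J_x(\ell^+)<-1$, so $u'(\ell^+)<0$ and $u<0$ just above $\ell$ — the contradiction.

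The crux is controlling that correction integral, and I expect this to be the hard part. For $\mu<0$ one has $\nu<-1/2$, so $M(0)=1$ and $M$ decreases to $0$; thus the bracket $(K-r)-(K-\ell)M(s)$ is negative for small $s$ and positive for large $s$, and the bound cannot be read off termwise. Closing it must invoke the defining integral of $\lambda$ in \eqref{lambda} together with $r\le a^*$, which is exactly where the threshold $\frac{\lambda}{1+\lambda}K$ enters. An equivalent route writes $J(x,R)=\int_0^\infty e^{-s}w_s(x)\,ds$ with $w_s$ solving $\frac12\sigma^2x^2w_s''+\mu xw_s'-\beta sw_s=0$ on $(\ell,r)$, $w_s(\ell)=K-\ell$, $w_s(r)=K-r$; then $u_s:=w_s-(K-x)$ vanishes at both endpoints and satisfies $(L-\beta s)u_s=\beta s(K-x)+\mu x$ with $L=\frac12\sigma^2x^2\partial_{xx}+\mu x\partial_x$, whose right side is positive for large $s$ (forcing $u_s\le0$ by the maximum principle) and negative for small $s$ (forcing $u_s\ge0$), so that $u=\int_0^\infty e^{-s}u_s\,ds$ becomes negative somewhere precisely when the large-$s$ contributions dominate, i.e. when $r\le a^*$. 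Balancing the two exit boundaries against the discount weight is the delicate point.
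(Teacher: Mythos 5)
Your reduction is correct and nicely organized: showing $\inf R=0$ via the supermartingale-type bound for a one-sided gap, then intersecting $R$ with $(0,a^*]$ (legitimate by \propref{p2}, since $f$ is bounded so \eqref{e34} and \eqref{e32} hold, and $(0,a^*]\in\cE$ by \corref{coro:equilibrium condition}), and using \lemref{lem:eta}(ii) to force $\sup\big(R\cap(0,a^*]\big)=a^*$, hence $a^*\in R$ and $r\le a^*$. This replaces, in a clean way, the paper's first case (the paper instead intersects $(0,\ell]\cup[r,\infty)$ with $(0,a^*]$). The problem is what comes after: the entire content of the lemma now sits in the claim that a bounded gap $0<\ell<x_0<r\le a^*$ with $\ell,r\in R$ is impossible, and for this you do not give a proof. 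Both of your routes (the boundary-slope formula $J_x(\ell^+)$ with its sign-changing correction integral, and the maximum-principle decomposition $u=\int_0^\infty e^{-s}u_s\,ds$) end with a conditional: ``if the correction integral can be bounded by the slack,'' ``becomes negative somewhere precisely when the large-$s$ contributions dominate.'' That conditional \emph{is} the lemma; restating it in Laplace-transform or PDE language is not progress toward it. Moreover, the slope route is genuinely borderline: expanding your formula for a tiny gap with $r=a^*$ and $\ell\uparrow a^*$, the leading terms cancel exactly (this cancellation is forced by the defining relation $\frac{K-a^*}{a^*}\lambda=1$), so $J_x(\ell^+)<-1$ survives only at the next order, with coefficient proportional to $\beta/\lambda-|\mu|>0$. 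Any proof along your lines would therefore need sharp second-order control of the two-sided exit transforms, uniformly in $0<\ell<r\le a^*$ — far from a routine estimate, and you have not indicated how to obtain it.

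The paper closes this case without any such estimate, by a scaling argument that exploits the homogeneity of geometric Brownian motion together with the affine form of $f$ on $(0,K]$. From the equilibrium property $J(\cdot,R)\ge f$ on $(\ell,r)$ and the elementary bound \eqref{h_1+h_2<1} (the total discounted exit mass of the gap is strictly less than $1$), dilation by any $\alpha>1$ costs at most $(\alpha-1)K\cdot(\hbox{mass})<(\alpha-1)K$, which is exactly the gain of $\alpha f(y)-(\alpha-1)K=f(\alpha y)$; hence $R''=(0,\alpha\ell]\cup[\alpha r,\infty)$ is again an equilibrium. Choosing $\alpha$ with $\alpha\ell<a^*<\alpha r<K$ and intersecting with $(0,a^*]$ via \propref{p2} yields $(0,\alpha\ell]\in\cE$ with $\alpha\ell<a^*$, contradicting \corref{coro:equilibrium condition}. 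In short: your setup and reduction stand, but the core of the lemma is missing, and the analytic path you sketch is precisely where the difficulty is concentrated, whereas the intended argument sidesteps it by moving the gap rather than analyzing it.
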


\begin{proof}
Fix $R\in\cE$ that is closed and contained in $(0,K]$. By contradiction, suppose there exists $x\in\big(0,\big(\frac{\lambda}{1+\lambda}\big) K\big]$ such that $x\notin R$. Define $\ell$ and $r$ as in \eqref{bounds}, with $\ell$ taken to be $0$ if there exists no $y\in R$ with $y<x$. By the closedness of $R$, we have $\ell < x$ and $r> x$. Since $R$ is an equilibrium, 
\begin{equation}\label{J in (ell,r)}
J(y,R)\ge f(y)\quad  \hbox{for all $y\in (\ell,r)$}. 
\end{equation}

First, we deal with the case $r > \big(\frac{\lambda}{1+\lambda}\big) K$. Note that $R' := (0,\ell]\cup [r,\infty)$ and $\big(0,\big(\frac{\lambda}{1+\lambda}\big) K\big]$ both belong to $\cE$, thanks to  \eqref{J in (ell,r)} and Corollary~\ref{coro:equilibrium condition}, respectively. It then follows from Proposition~\ref{p2} that
$
R' \cap \big(0,\big(\frac{\lambda}{1+\lambda}\big) K\big] = (0,\ell]
$ 
again belongs to $\cE$. This, however, contradicts Corollary~\ref{coro:equilibrium condition}, as $\ell <x\le \big(\frac{\lambda}{1+\lambda}\big) K$. 

Next, we deal with the case $r \le \big(\frac{\lambda}{1+\lambda}\big) K$. 
For any $y\in(\ell,r)$, define $\tau:=\inf\{t\ge 0: X^y_t\notin (\ell,r)\}$, and let $p:(0,\infty)\times\{\ell,r\}$ denote the joint density function of $(\tau, X^y_{\tau})$. A calculation similar to \eqref{Laplace} and \eqref{J(y,R)} yields
\begin{align}\label{J(y,R)'}
J(y,R) &= (K-\ell)\int_0^\infty  e^{-s}\ h_1(s,y,\ell,r)\ ds+  (K-r)\int_0^\infty  e^{-s}\ h_2(s,y,\ell,r)\ ds, 
\end{align}
where the functions $h_1$ and $h_2$ are specified below \eqref{J(y,R)}. Observe from \eqref{J put on GBM} and \eqref{J(y,R)'} that $\int_0^\infty  e^{-s} h_1(s,y,\ell,r)\ ds < \P^y[X_\tau = \ell]$ and $\int_0^\infty  e^{-s} h_2(s,y,\ell,r)\ ds< \P^y[X_\tau = r]$. This implies
\begin{equation}\label{h_1+h_2<1}
\int_0^\infty  e^{-s} h_1(s,y,\ell,r)\ ds + \int_0^\infty  e^{-s} h_2(s,y,\ell,r)\ ds <1. 
\end{equation}
Now, pick $\alpha>1$ such that $\alpha\ell<  \big(\frac{\lambda}{1+\lambda}\big) K < \alpha r<K$. Define $R'':= (0,\alpha\ell]\cup [\alpha r,\infty)$. For any $y\in (\ell,r)$, thanks to \eqref{homogeneity}, the same calculation leading to \eqref{J(y,R)'}, through arguments in \eqref{Laplace} and \eqref{J(y,R)}, now gives
\[
J(\alpha y, R'') = (K-\alpha\ell) \int_0^\infty  e^{-s} h_1(s,y,\ell,r)\ ds + (K-\alpha r)\int_0^\infty  e^{-s} h_2(s,y,\ell,r)\ ds. 
\]
This, together with \eqref{J(y,R)'}, shows that  
\begin{align*}
J(\alpha y, R'') - \alpha J(y,R) &= -(\alpha-1) K \left[\int_0^\infty  e^{-s} h_1(s,y,\ell,r)\ ds + \int_0^\infty  e^{-s} h_2(s,y,\ell,r)\ ds \right]\\
&> -(\alpha-1) K, 
\end{align*}
where the second line follows from \eqref{h_1+h_2<1} and $\alpha>1$. As a result,
\[
J(\alpha y, R'') \ge \alpha J(y,R) -(\alpha-1) K \ge \alpha f(y) -(\alpha-1) K = K- \alpha y = f(\alpha y), 
\]
where the second inequality follows from \eqref{J in (ell,r)}. Since the above relation holds for all $y\in(\ell,r)$, we conclude that $(\alpha\ell, \alpha r)\subseteq I_{R''}\cup C_{R''}$. This implies $R''\in\cE$. Recalling that $\big(0,\big(\frac{\lambda}{1+\lambda}\big) K\big]\in \cE$ by Corollary~\ref{coro:equilibrium condition}, we conclude from Proposition~\ref{p2} that
$
R'' \cap \big(0,\big(\frac{\lambda}{1+\lambda}\big) K\big] = (0,\alpha \ell]
$ 
also belongs to $\cE$. This, however, contradicts Corollary~\ref{coro:equilibrium condition}, as $\alpha\ell<  \big(\frac{\lambda}{1+\lambda}\big) K$.
\end{proof}

Now, we are ready to present one single explicit formula for an optimal equilibrium, for any value of $\mu\in\R$.  

\begin{proposition}\label{prop:last}
$R^* = \big(0,\big(\frac{\lambda}{1+\lambda}\big) K\big]$, with $\lambda>0$ as in \eqref{lambda}, is the unique closed optimal equilibrium. 
\end{proposition}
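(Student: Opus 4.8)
The plan is to assemble the preceding lemmas through the two general results Theorem~\ref{t1} and Theorem~\ref{t2}. Since $f=(K-x)^+$ is bounded, Assumption~\ref{a1}, \eqref{e34}, and \eqref{e32} all hold, so Theorem~\ref{t1} immediately guarantees that $R^*$ defined in \eqref{R*} is a (closed) optimal equilibrium. It therefore only remains to (a) identify $R^*$ explicitly as $\big(0,\bar a\,\big]$, where $\bar a:=\big(\tfrac{\lambda}{1+\lambda}\big)K$ with $\lambda$ as in \eqref{lambda}, and (b) verify the uniqueness criterion \eqref{for uniqueness} so that Theorem~\ref{t2} upgrades optimality to \emph{unique} closed optimality.

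For the identification in (a), I would first obtain the inclusion $R^*\subseteq\big(0,\bar a\,\big]$. Indeed, $\bar a\in(0,K)$ because $\lambda>0$, and Corollary~\ref{coro:equilibrium condition} shows that $\big(0,\bar a\,\big]\in\cE$; it is closed in $\X=(0,\infty)$, hence one of the sets intersected in \eqref{R*}, whence $R^*\subseteq\big(0,\bar a\,\big]\subseteq(0,K]$. The crucial payoff of this inclusion is that $R^*$ is now automatically a closed equilibrium \emph{contained in} $(0,K]$, which is exactly the hypothesis under which the structural lemmas apply. For $\mu\ge 0$, Lemma~\ref{lem:lambda>=0} forces $R^*=(0,a]$ for some $a\in(0,K]$ (note $R^*\neq\emptyset$, since $\emptyset\notin\cE$: as $f>0$ on $(0,K)$ one has $J(x,\emptyset)=0<f(x)$ there); then $R^*\in\cE$ and Corollary~\ref{coro:equilibrium condition} give $a\ge\bar a$, while $R^*\subseteq\big(0,\bar a\,\big]$ gives $a\le\bar a$, so $a=\bar a$. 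For $\mu<0$, Lemma~\ref{lem:lambda<0} directly yields $\big(0,\bar a\,\big]\subseteq R^*$, and combined with the reverse inclusion this gives $R^*=\big(0,\bar a\,\big]$. Either way $R^*=\big(0,\bar a\,\big]$.

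For the uniqueness in (b), I would verify $(R^*)^c=C_{R^*}$. Since on $R^*$ one has $\rho(x,R^*)=0$ (Lemma~\ref{lem:rho R = rho bar R}) and hence $x\in I_{R^*}$, it suffices to show $(\bar a,\infty)\subseteq C_{R^*}$. For $x>\bar a$, continuity of $X$ identifies the entry time $\rho(x,R^*)$ with the first hitting time $T^x_{\bar a}$, so \eqref{J put on GBM} gives $J(x,R^*)=\eta(x,\bar a)$ (using $\bar a<K$). Lemma~\ref{lem:eta}(iii), applied with $a=\bar a$ so that the threshold condition $a\ge\big(\tfrac{\lambda}{1+\lambda}\big)K$ holds with equality, yields the strict inequality $\eta(x,\bar a)>(K-x)^+=f(x)$ for every $x>\bar a$, i.e. $x\in C_{R^*}$. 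Thus \eqref{for uniqueness} holds, and Theorem~\ref{t2} concludes that $R^*=\big(0,\bar a\,\big]$ is the unique closed optimal equilibrium.

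The analytically hard work has already been front-loaded into Lemmas~\ref{lem:lambda>=0} and \ref{lem:lambda<0}, Lemma~\ref{lem:eta}, and Corollary~\ref{coro:equilibrium condition}, so the proposition itself is a clean assembly; the only genuine subtlety I expect is the reduction to equilibria contained in $(0,K]$. That reduction is not assumed but \emph{earned}: exhibiting the single explicit equilibrium $\big(0,\bar a\,\big]$ forces $R^*\subseteq(0,K]$ \emph{before} any structural lemma is invoked, and this is precisely what lets me apply those lemmas to $R^*$ itself rather than to an arbitrary equilibrium, which could in principle carry extra points above $K$ sitting in an indifference region.
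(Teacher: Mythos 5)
Your proof is correct, and it uses the same ingredients as the paper's (Lemmas~\ref{lem:lambda>=0} and \ref{lem:lambda<0}, Corollary~\ref{coro:equilibrium condition}, Lemma~\ref{lem:eta}~(iii), Theorems~\ref{t1} and \ref{t2}), but the identification of $R^*$ is organized along a genuinely different route. The paper first observes that $(0,K]\in\cE$ (since $f\equiv 0$ on $[K,\infty)$) and invokes Proposition~\ref{p2} to rewrite \eqref{R*} as the intersection of all closed equilibria \emph{contained in} $(0,K]$; it then characterizes that entire family via the structural lemmas and computes the intersection. You instead use the single equilibrium $\big(0,\big(\frac{\lambda}{1+\lambda}\big)K\big]$ supplied by Corollary~\ref{coro:equilibrium condition} to obtain $R^*\subseteq\big(0,\big(\frac{\lambda}{1+\lambda}\big)K\big]\subseteq(0,K]$ directly from \eqref{R*}, and then apply the structural lemmas to $R^*$ \emph{itself} -- which is legitimate precisely because you invoke Theorem~\ref{t1} first, so that $R^*$ is already certified to be a closed equilibrium before being identified. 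This bypasses both Proposition~\ref{p2} and the observation $(0,K]\in\cE$, at the cost of front-loading Theorem~\ref{t1}; since its hypotheses (boundedness of $f$ giving \eqref{e34} and \eqref{e32}, plus Assumption~\ref{a1}) do not depend on the identification, either order is sound. A small additional merit of your write-up is that you explicitly rule out $R^*=\emptyset$ (via $J(x,\emptyset)=0<f(x)$ on $(0,K)$), which is tacitly needed to apply Lemma~\ref{lem:lambda>=0} and is left implicit in the paper. The uniqueness step -- identifying $J(x,R^*)=\eta\big(x,\big(\frac{\lambda}{1+\lambda}\big)K\big)$ for $x$ above the threshold, invoking Lemma~\ref{lem:eta}~(iii) to verify \eqref{for uniqueness}, and concluding with Theorem~\ref{t2} -- is the same in both proofs.
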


\begin{proof}
First, observe that $(0,K]\in\cE$. Indeed, since $f\equiv 0$ on $[K,\infty)$, one trivially gets $J(x,(0,K]) = 0 = f(x)$, for all $x\in (K,\infty)$. This implies $(K,\infty)\in I_{(0,K]}$, and thus $(0,K]\in\cE$. In view of \eqref{R*} and Proposition~\ref{p2}, 
\begin{equation}\label{R* formula}
R^* = \bigcap_{R\in\cE,\ R\ \text{closed}} R\cap (0,K] = \bigcap_{R\in\cE,\ R\ \text{closed},\ R\subseteq (0,K]} R. 
\end{equation}
If $\mu\ge 0$, by \eqref{R* formula}, Lemma~\ref{lem:lambda>=0}, and Corollary~\ref{coro:equilibrium condition}, we have
\[
R^* = \bigcap_{\big(\frac{\lambda}{1+\lambda}\big) K\le a\le K} (0,a] = \bigg(0,\bigg(\frac{\lambda}{1+\lambda}\bigg) K\bigg].
\] 
If $\mu<0$, then \eqref{R* formula}, Lemma~\ref{lem:lambda<0}, and $\big(0,\big(\frac{\lambda}{1+\lambda}\big) K\big]\in\cE$ (by Corollary~\ref{coro:equilibrium condition}) again gives $R^* = \big(0,\big(\frac{\lambda}{1+\lambda}\big) K\big]$. Hence, by Theorem~\ref{t1}, $R^* = \big(0,\big(\frac{\lambda}{1+\lambda}\big) K\big]$ is an optimal equilibrium. Finally, from Lemma~\ref{lem:eta} (iii), $J(x,R^*)= \eta(x,\big(\frac{\lambda}{1+\lambda}\big) K)> (K-x)^+$ for all $x> \big(\frac{\lambda}{1+\lambda}\big) K$, which implies $(R^*)^c = C_{R^*}$. Theorem~\ref{t2} thus ensures that $R^*$ is the unique closed optimal equilibrium. 
\end{proof}

In view of the payoff function $f(x) = (K-x)^+$, one would like the geometric Brownian motion $X$ to drift below $K>0$, generating strictly positive payoff. It is then reasonable to focus on threshold-type stopping policies $(0, a]$, with $0<a\le K$. The challenge here is how small $a>0$ should be, so that the resulting policy can be an equilibrium. Intuitively, if $a>0$ is too small, the discounting involved $\frac{1}{1+\beta \rho(x,(0,a])}$ may outweigh the payoff $(K-a)^+$ obtained at stopping, reducing the expected payoff $J(x,(0,a]) = \E^x\left[\frac{(K-a)^+}{1+\beta \rho(x,(0,a])}\right]$. Thus, anticipating that future selves will follow too small a threshold $a>0$, the current self may decide to stop immediately -- such that $(0,a]$ cannot be an equilibrium. The main contribution of this subsection is finding the smallest possible threshold, i.e. $a = \big(\frac{\lambda}{1+\lambda}\big) K$, such that $(0,a]$ is an equilibrium, as shown in Lemma~\ref{lem:lambda>=0}, Corollary~\ref{coro:equilibrium condition}, and Lemma~\ref{lem:lambda<0}. Now, since the discount function \eqref{hyperbolic} induces decreasing impatience, the agent prefers smaller equilibria over larger ones; see Corollary~\ref{coro:smaller better} and the detailed discussion below it. This readily shows, at least at the intuitive level, that the smallest possible equilibrium $\big(0,\big(\frac{\lambda}{1+\lambda}\big) K\big]$ is optimal, which is rigorously established in Proposition~\ref{prop:last}.

The threshold $\big(\frac{\lambda}{1+\lambda}\big) K$ responds desirably to the downward potential of $X$. Similarly to Subsection~\ref{subsec:GBM}, $\nu$ in \eqref{nu} measures the downward potential of $X$: the smaller $\nu$, the stronger the downward potential. 
It can be checked that $\lambda>0$ in \eqref{lambda} is strictly increasing in $\nu$, so the downward potential of $X$ can also be measured by how small $\lambda$ is. Observe that $\frac{\lambda}{1+\lambda}$ is strictly increasing in $\lambda$, with $\lim_{\lambda\downarrow 0} \frac{\lambda}{1+\lambda} =0$ and $\lim_{\lambda\uparrow \infty} \frac{\lambda}{1+\lambda} =1$. When $\lambda$ is very small (i.e. $X$ has strong downward potential), one should take a stopping threshold close to $0$, to fully exploit the downward potential that can generate strictly positive payoff. This is captured by $\big(\frac{\lambda}{1+\lambda}\big) K$, decreasing to 0 as $\lambda\downarrow 0$. As $\lambda$ increases (i.e. the downward potential of $X$ weakens), $\big(\frac{\lambda}{1+\lambda}\big) K$ also increases, reflecting an inclination to stop earlier.  
When $\lambda$ is very large (i.e. $X$ has little downward potential), one should stop once $X$ drifts only slightly below $K>0$. This is again captured by $\big(\frac{\lambda}{1+\lambda}\big) K$, increasing to $K$ as $\lambda\uparrow \infty$.


\begin{thebibliography}{10}

\bibitem{Asheim97}
{\sc G.~Asheim}, {\em Individual and collective time-consistency}, Review of
  Economic Studies, 64 (1997), pp.~427--443.

\bibitem{BS12}
{\sc E.~Bayraktar and M.~S\^{i}rbu}, {\em Stochastic {P}erron's method and
  verification without smoothness using viscosity comparison: the linear case},
  Proc. Amer. Math. Soc., 140 (2012), pp.~3645--3654.

\bibitem{BKM17}
{\sc T.~Bj{\"o}rk, M.~Khapko, and A.~Murgoci}, {\em On time-inconsistent
  stochastic control in continuous time}, Finance and Stochastics, 21 (2017),
  pp.~331--360.

\bibitem{BMZ14}
{\sc T.~Bj{\"o}rk, A.~Murgoci, and X.~Y. Zhou}, {\em Mean-variance portfolio
  optimization with state-dependent risk aversion}, Math. Finance, 24 (2014),
  pp.~1--24.

\bibitem{BS-book-2002}
{\sc A.~N. Borodin and P.~Salminen}, {\em Handbook of {B}rownian motion---facts
  and formulae}, Probability and its Applications, Birkh\"auser Verlag, Basel,
  second~ed., 2002.

\bibitem{EMP12}
{\sc I.~Ekeland, O.~Mbodji, and T.~A. Pirvu}, {\em Time-consistent portfolio
  management}, SIAM J. Financial Math., 3 (2012), pp.~1--32.

\bibitem{EP08}
{\sc I.~Ekeland and T.~A. Pirvu}, {\em Investment and consumption without
  commitment}, Math. Financ. Econ., 2 (2008), pp.~57--86.

\bibitem{GW07}
{\sc S.~R. Grenadier and N.~Wang}, {\em Investment under uncertainty and
  time-inconsistent preferences}, Journal of Financial Economics, 84 (2007),
  pp.~2--39.

\bibitem{HJZ12}
{\sc Y.~Hu, H.~Jin, and X.~Y. Zhou}, {\em Time-inconsistent stochastic
  linear-quadratic control}, SIAM J. Control Optim., 50 (2012), pp.~1548--1572.

\bibitem{HN17}
{\sc Y.-J. Huang and A.~Nguyen-Huu}, {\em Time-consistent stopping under
  decreasing impatience}, Finance and Stochastics, 22 (2018), pp.~69--95.

\bibitem{HNZ17}
{\sc Y.-J. Huang, A.~Nguyen-Huu, and X.~Y. Zhou}, {\em Stopping behaviors of
  na\"{i}ve and non-committed sophisticated agents when they distort
  probability}, tech. rep., University of Colorado, University of Montpellier,
  and Columbia University, 2017.
\newblock Available at https://arxiv.org/abs/1709.03535.

\bibitem{HZ17-discrete}
{\sc Y.-J. Huang and Z.~Zhou}, {\em Optimal equilibrium for time-inconsistent
  stopping problems -- the discrete-time case},  (2017).
\newblock Preprint. Available at https://arxiv.org/abs/1707.04981.

\bibitem{KS-book-91}
{\sc I.~Karatzas and S.~E. Shreve}, {\em Brownian motion and stochastic
  calculus}, vol.~113 of Graduate Texts in Mathematics, Springer-Verlag, New
  York, second~ed., 1991.

\bibitem{KS-book-98}
\leavevmode\vrule height 2pt depth -1.6pt width 23pt, {\em Methods of
  mathematical finance}, vol.~39 of Applications of Mathematics (New York),
  Springer-Verlag, New York, 1998.

\bibitem{KT-book-81}
{\sc S.~Karlin and H.~M. Taylor}, {\em A second course in stochastic
  processes}, Academic Press, Inc. [Harcourt Brace Jovanovich, Publishers], New
  York-London, 1981.

\bibitem{Kocher96}
{\sc N.~Kocherlakota}, {\em Reconsideration-proofness: A refinement for
  infinite horizon time inconsistency}, Games and Economic Behavior, 15 (1996),
  pp.~33--54.

\bibitem{LP92}
{\sc G.~Loewenstein and D.~Prelec}, {\em Anomalies in intertemporal choice:
  evidence and an interpretation}, Q. J. Econ., 57 (1992), pp.~573--598.

\bibitem{LT89}
{\sc G.~Loewenstein and R.~Thaler}, {\em Anomalies: Intertemporal choice},
  Journal of Economic Perspectives, 3 (1989), pp.~181--193.

\bibitem{PY73}
{\sc B.~Peleg and M.~E. Yaari}, {\em On the existence of a consistent course of
  action when tastes are changing}, Review of Economic Studies, 40 (1973),
  pp.~391--401.

\bibitem{PP68}
{\sc E.~Phelps and R.~A. Pollak}, {\em On second-best national saving and
  game-equilibrium growth}, Review of Economic Studies, 35 (1968),
  pp.~185--199.

\bibitem{Pollak68}
{\sc R.~A. Pollak}, {\em Consistent planning}, The Review of Economic Studies,
  (1968), pp.~201--208.

\bibitem{prelec2004decreasing}
{\sc D.~Prelec}, {\em Decreasing impatience: A criterion for non-stationary
  time preference and ``hyperbolic'' discounting}, The Scandinavian Journal of
  Economics, 106 (2004), pp.~511--532.

\bibitem{Shiryayev-book-78}
{\sc A.~N. Shiryayev}, {\em Optimal stopping rules}, Springer-Verlag, New
  York-Heidelberg, 1978.
\newblock Translated from the Russian by A. B. Aries, Applications of
  Mathematics, Vol. 8.

\bibitem{Strotz55}
{\sc R.~H. Strotz}, {\em Myopia and inconsistency in dynamic utility
  maximization}, The Review of Economic Studies, 23 (1955), pp.~165--180.

\bibitem{Thaler81}
{\sc R.~Thaler}, {\em Some empirical evidence on dynamic inconsistency}, Econ.
  Lett., 8 (1981), pp.~201--207.

\bibitem{Yong12}
{\sc J.~Yong}, {\em Time-inconsistent optimal control problems and the
  equilibrium \textsc{HJB} equation}, Mathematical Control and Related Fields,
  3 (2012), pp.~271--329.

\end{thebibliography}

\end{document}